\newtheorem{theorem}{Theorem}[section]
\newtheorem*{theorem*}{Theorem}
\newtheorem{remark}[theorem]{Remark}
\newtheorem{definition}[theorem]{Definition}
\newtheorem{proposition}[theorem]{Proposition}
\newtheorem{lemma}[theorem]{Lemma}
\newtheorem{corollary}[theorem]{Corollary}
\newtheorem*{remark*}{Remark}
\newtheorem*{proposition*}{Proposition}
\numberwithin{equation}{section}
\newcommand{\sconc}{\odot}
\newcommand{\cc}{{\rm cc}}
\newcommand{\hccp}[2]{\widehat\Pi^{\cc,#1} \if\relax\detokenize{#2}\relax\else [#2] \fi} % Explanation:
\newcommand{\ccp}[3]{\Pi^{\cc,#1}_{#2}  \if\relax\detokenize{#3}\relax\else [#3] \fi} % Explanation:
\newcommand{\nnhhpcc}[4]{\widehat\Phi^{\cc,#1}_{#2,#3} \if\relax\detokenize{#4}\relax\else [#4] \fi} % Explanation:
\newcommand{\hhpcc}[3]{\widehat\Pi^{\cc,#1}_{#2} \if\relax\detokenize{#3}\relax\else [#3] \fi} % Explanation:
\newcommand{\CC}{Clenshaw--Curtis }
\newcommand{\Cheb}{{Cheby\v{s}ev} } % Macro for unified spelling of Tschebyscheff
\newcommand{\Chebdash}{{Cheby\v{s}ev}-} % Macro for unified spelling of Tschebyscheff
\DeclareMathOperator{\ifft}{IFFT}
\newcommand{\dist}[1]{\operatorname{dist}_{#1}}
\DeclareMathOperator{\spann}{span \,}
\DeclareMathOperator*{\esssup}{ess\!\sup}
\newcommand{\norm}[2][]{\| #2 \|_{#1}} 
\newcommand{\snorm}[2][]{| #2 |_{#1}} 
\newcommand{\normc}[2][]{\left\| #2 \right\|_{#1}} 
\newcommand{\snormc}[2][]{\left| #2 \right|_{#1}}
\newcommand{\hI}{\hat{I}}
\DeclareMathOperator{\Id}{Id}
\newcommand{\depth}{L}
\newcommand{\size}{M}
\newcommand{\first}{\operatorname{fi}}
\newcommand{\last}{\operatorname{la}}
\newcommand{\sizefirst}{M_{\operatorname{fi}}} 
\newcommand{\sizelast}{M_{\operatorname{la}}} 
\newcommand{\Parallel}[1]{\mathrm{P}\left(#1\right)} 
\newcommand{\dParallel}[1]{\mathrm{FP}\left(#1\right)}
\newcommand{\Ecal}{\mathcal{E}}
\newcommand{\Gcal}{\mathcal{G}}
\newcommand{\Tcal}{\mathcal{T}}
\newcommand{\Z}{\mathbb{Z}}
\newcommand{\N}{\mathbb{N}}
\newcommand{\R}{\mathbb{R}}
\newcommand{\IC}{\mathbb{C}}
\newcommand{\bbP}{\mathbb{P}}
\newcommand{\bsbeta}{{\boldsymbol \beta}}
\newcommand{\bsp}{{\boldsymbol p}}
\newcommand{\bsv}{{\boldsymbol v}}
\newcommand{\bsone}{{\boldsymbol{1}}} %{\bm 1} 
\newcommand{\eps}{\varepsilon}
\newcommand{\der}[1]{{D^{#1}}}
\newcommand{\realiz}{\mathrm{R}}
\newcommand{\dd}{\mathrm{d}}
\newcommand{\du}[1][]{d_{#1}}
\newcommand{\Cu}[1][]{C_{#1}}
\newcommand{\Nsing}{N_{\rm sing}}
\newcommand{\Wwe}[3]{W^{#1}_{#2,#3}}
\newcommand{\Hwe}[2]{H^{#1}_{#2}}
\begin{document}

\title{
Deep ReLU networks and high-order finite element methods
\\
II: \Cheb emulation
}

\author[$\dagger$]{Joost A. A. Opschoor} 
\author[$\dagger$]{Christoph Schwab}

\affil[$\dagger$]{\footnotesize Seminar for Applied Mathematics, ETH
  Z\"{u}rich, HG G57.1, R\"{a}mistrasse 101, CH--8092 Z\"urich, Switzerland
  \newline \texttt{joost.opschoor@sam.math.ethz.ch,\;
    christoph.schwab@sam.math.ethz.ch}}

\date{\today}

\maketitle

\abstract{
We show expression rates and stability in Sobolev norms 
of deep feedforward ReLU neural networks (NNs) 
in terms of the number of parameters defining the NN
for continuous, piecewise polynomial functions, 
on arbitrary, finite partitions $\Tcal$ of a bounded interval $(a,b)$.
Novel constructions of ReLU NN surrogates encoding function approximations
in terms of \Cheb polynomial expansion coefficients are developed which
require fewer neurons than previous constructions.
\Cheb coefficients can be computed easily
from the values of the function in the \CC points
using the inverse fast Fourier transform.
Bounds on expression rates and stability are obtained that are superior to those of
constructions based on ReLU NN emulations of monomials 
as considered in \cite{OPS2020,MontYangDu21}.
All emulation bounds are explicit in terms of the (arbitrary)
partition of the interval, the target emulation accuracy and
the polynomial degree in each element of the partition.
ReLU NN emulation error 
estimates are provided for various classes of functions and norms, 
commonly encountered in numerical analysis. 
In particular,
we show
exponential ReLU emulation rate bounds for analytic functions with point singularities
and develop an interface between
Chebfun approximations and constructive ReLU NN emulations.

\medskip
\noindent
\textbf{Keywords:} 
Neural Networks, $hp$-Finite Element Methods, \Cheb Expansions

\smallskip 
\noindent
\textbf{Subject Classification:}  
65N30, % Finite element, Rayleigh-Ritz and Galerkin methods for boundary value problems involving PDEs
41A05, % Interpolation in approximation theory
41A10, % Approximation by polynomials
41A25, % Rate of convergence, degree of approximation
41A50 % Best approximation, Chebyshev systems

%%%%%%%%%%%%%%%%%%%%%%%%%%%%%%%%%%%%%%%%%%%%%%%%%%%%%%%%%%%%%%%%%%%%%
\section{Introduction}
\label{sec:intro}
%%%%%%%%%%%%%%%%%%%%%%%%%%%%%%%%%%%%%%%%%%%%%%%%%%%%%%%%%%%%%%%%%%%%%

The use of Deep Neural Networks (DNNs for short) 
as approximation methods for the 
numerical solution of Partial Differential Equations (PDEs for short)
has received considerable attention in recent years.
We mention only the ``PiNNs'' and the ``Deep Ritz'' methodologies,
and their variants.

One question which arises in this context is the DNN
``expressivity'', which could be labeled as approximation power
of the DNN with specific architecture (comprising e.g. depth,
width and activation function),
for particular sets of functions which arise as solution
components (``solution features'' in the parlance of deep learning)
of PDEs in science and in engineering.

In recent years, \emph{numerous results} 
on analytic and approximation properties of deep ReLU NNs 
have appeared.
We only provide an incomplete review, 
with particular attention to results of direct relevance to the present paper.
Previous works focused on
particular classes of univariate functions, e.g \cite{PGEB2018}.
It has also been observed that DNNs, in particular the presently 
considered ReLU activated DNNs, can emulate large classes of well-established
approximation architectures, 
such as refinable functions (wavelets) 
(see, e.g., \cite{BGKP2017,DHP21} and the references there),
splines and high-order polynomial functions \cite{OPS2020}. 
Recently, also rather wide classes of Finite-Element approximation spaces on
regular, simplicial partitions of polytopal domains 
in $\R^d$ have been emulated by ReLU DNNs \cite{LODSZ2022}.
Further results, not of direct relevance here, on approximation rate bounds
for deep ReLU NNs include complexity bounds in terms of the number of affine
pieces \cite{HinzGeer19}, 
expression rate bounds for parametric transport \cite{LaakPhPReLUTrnsp},
and high-dimensional classification 
(see \cite{petersen2021optimal} and the references there).

This paper analyzes expression rates for deep NN emulation 
of splines and high-order polynomials, as in \cite{OPS2020}.
Distinct from \cite{OPS2020},
\Cheb polynomials are used as a basis for spaces of univariate polynomials.
This implies better stability properties of the resulting NN
surrogates, and allows, as we show, 
to obtain more efficient neural network approximations. 
In Section \ref{sec:Contrib},
we detail the benefits of the present, \Cheb polynomial-based approach 
and compare it with previous work, including \cite{OPS2020}.

%%%%%%%%%%%%%%%%%%%%%%%%%%%%%%%%%%%%%%%%%%%%%%%%%%%%%%%%%%%%%%%%%%%
\subsection{Layout}
\label{sec:Layout}
%%%%%%%%%%%%%%%%%%%%%%%%%%%%%%%%%%%%%%%%%%%%%%%%%%%%%%%%%%%%%%%%%%%
The structure of this text is as follows:
In Section \ref{sec:Contrib}, 
we summarize the contributions of this paper.
We first recall in Section \ref{sec:conclinterp}
the benefits of using \Cheb polynomials for polynomial interpolation.
ReLU NN approximations based on the conversion of \Cheb interpolants 
are introduced in Section \ref{sec:conclemul}, 
and bounds on the error incurred by conversion are given.
Quantitative improvements of our method with respect to previous work
are stated in Section \ref{sec:conclquant}, in particular
how the \Chebdash based approach delivers the same relative pointwise 
emulation accuracy with a smaller network size.
The relation between our approach and that used in the 
Chebfun software package \cite{chebguideone}
is discussed in Section \ref{sec:conclchebfun}.
The main body of this work starts in Section~\ref{sec:prelim}.
In that section, we recapitulate classical facts from
approximation theory, and from orthogonal polynomials.
Section~\ref{sec:CPwLoned} introduces the spline spaces for which we prove
emulation bounds. 
Section~\ref{sec:functionsponed} formalizes certain scales of weighted function
spaces on intervals $I$, whose members are piecewise smooth, up to 
possibly a finite number of point singularities at points in $I$ 
or at its boundary.
Section~\ref{sec:cconed} recalls polynomial interpolation in the \CC points,
its stability,
and the expression of such interpolants in the basis of \Cheb polynomials
using the inverse fast Fourier transform.
All these concepts introduced in Section \ref{sec:prelim}
are used to construct and analyze,
in Section~\ref{sec:basicnn},
ReLU DNN approximations of univariate polynomials 
and continuous, piecewise polynomial functions.
Using ReLU DNN approximations of univariate, piecewise polynomial functions
to emulate 
$h$-, $p$- and $hp$-finite element methods
provides us 
in Section \ref{sec:FESpaces}
with DNN approximation rates
for various classes of univariate functions.
In particular, we obtain in Section \ref{sec:hpFEM}
improved exponential convergence rates for the approximation of 
piecewise smooth, weighted Gevrey regular functions with point singularities.
Section \ref{sec:Concl} concludes the paper
and discusses extensions of the current results to other activation functions
and to spiking neural networks.
%%%%%%%%%%%%%%%%%%%%%%%%%%%%%%%%%%%%%%%%%%%%%%%%%%%%%%%%%%%%%%%%%%55
\subsection{Notation}
\label{sec:Notat}
%%%%%%%%%%%%%%%%%%%%%%%%%%%%%%%%%%%%%%%%%%%%%%%%%%%%%%%%%%%%%%%%%%%%
We denote by $C>0$ a generic, positive constant
whose numerical value may be different at each appearance, 
even within an equation. 
The dependence of $C$ on parameters 
is made explicit when necessary, e.g. $C(\eta,\theta)$.

For a finite set $S$ we denote its cardinality by $\snorm{S}$.
For a subset $S\subset\N_0$ and $k\in\N_0$, 
we define $\mathds{1}_{S}(k) = 1$ if $k\in S$, and $\mathds{1}_{S}(k) = 0$ otherwise.
We write $\dist{}(x,y) := \snorm{x-y}$ for all $x,y\in\R$.

For $p\in\N_0$, 
we denote the space of polynomials
of degree at most $p$ by 
$\bbP_p = \spann\{x^j: j\in\N_0, j\leq p\}$, 
with the convention $\bbP_{-1} := \{0\}$.
For an interval $I\subset\R$, 
we will sometimes write $\bbP_p(I)$
to indicate that we consider polynomials as functions on $I$.
For $k\in\N_0$, 
the univariate \Cheb polynomial (of the first kind) 
normalized such that $T_k(1)=1$ is denoted by $T_k$.
With slight abuse of notation we write expressions of the form 
$\left( \sum_i (a_i)^r \right)^{1/r}$ for $r\in[1,\infty]$,
where $a_i\in\R$ for all $i$,
by which we mean $\sup_i a_i$ in case $r=\infty$. 
Similarly, when writing $\left( \int_D \snorm{f(x)}^r \dd x \right)^{1/r}$ 
for a domain $D$ with $f\in L^r(D)$ and $r\in[1,\infty]$,
we mean $\esssup_{x\in D} \snorm{f(x)}$ in case $r=\infty$.

For vector spaces $U,W$ and a bounded linear operator $P: U \to W$,
the operator norm of $P$ is $\norm[U,W]{ P }$.

In connection with neural networks, we shall also invoke the 
realization $\mathrm R$, parallelization $\mathrm P$ and 
the full parallelization $\mathrm{FP}$ in Section~\ref{sec:basicnn}.

%%%%%%%%%%%%%%%%%%%%%%%%%%%%%%%%%%%%%%%%%%%%%%%%%%%%%%%%%%%%%%%%%%%%%
\section{Contributions}
\label{sec:Contrib}
%%%%%%%%%%%%%%%%%%%%%%%%%%%%%%%%%%%%%%%%%%%%%%%%%%%%%%%%%%%%%%%%%%%%%
%
The main technical contributions of the present paper are 
\emph{improved bounds on the emulation error and neural network complexity}
(as compared to the first part of this work \cite{OPS2020}) 
of deep ReLU NN emulations
of polynomials and continuous, piecewise polynomial functions.
Emulations by deep ReLU NNs which are developed in the proofs 
of the main results are constructive, 
based on point evaluations and on (piecewise) \Cheb expansions.
Improvements with respect to previous work \cite{OPS2020} 
are realized in terms of stability, DNN size vs. accuracy, and efficient construction.

In this section we give an overview of these benefits.
To keep the technical details to a minimum,
we consider in this introduction only ReLU NN emulations of polynomial interpolants on $[-1,1]$.
The analysis given here directly generalizes to ReLU NNs 
based on piecewise polynomial interpolants 
as shown in Section \ref{sec:relupwpolynom}.

Our \emph{constructive ReLU NN emulation} of a given function $g \in C^0([-1,1])$ 
uses nodal interpolation in the \CC nodes in $[-1,1]$.
Specifically, 
denote the Lagrangian polynomial interpolant of $g$ of degree $p\in\N$ by $g_p$
(this corresponds to $\hccp{p}{g}$ in the notation of Section \ref{sec:cconed}).
The main difference of the presently proposed 
construction with respect to \cite{OPS2020} is the use of 
FFT-based methods for \Cheb interpolation.
This allows for fast, numerical construction of the ReLU emulation of a given
continuous function, and for favorable stability properties of the construction:
the sum of the absolute values of the \Cheb coefficients of a polynomial
is much smaller than that of its Taylor coefficients.
Enhanced numerical stability of \Cheb polynomial based DNNs has recently 
been reported in \cite{TLYpre2019}.

Upon a recap on \Cheb expansions 
and polynomial interpolation in the \CC points
in Section \ref{sec:conclinterp},
Section \ref{sec:conclemul} introduces
ReLU NN emulations based on these interpolants
together with bounds on their error and their network depth and size.
The improvements with respect to previous work 
afforded by a \Chebdash based approach
are highlighted in Section \ref{sec:conclquant}.
In Section \ref{sec:conclchebfun},
we relate our approach to that of the Chebfun software package.

%%%%%%%%%%%%%%%%%%%%%%%%%%%%%%%%%%%%%%%%%%%%%%%%%%%%%%%%%%%%%%%%%%%%%
\subsection{\Cheb interpolation}
\label{sec:conclinterp}
%%%%%%%%%%%%%%%%%%%%%%%%%%%%%%%%%%%%%%%%%%%%%%%%%%%%%%%%%%%%%%%%%%%%%
%
We elaborate on the error and the complexity of interpolation
and assume for simplicity that 
$g \in C^0([-1,1])$
admits a \Cheb expansion $g = \sum_{j=0}^\infty a_j T_j$
with absolutely summable \Cheb coefficients $(a_j)_{j=0}^\infty$.
Uniform pointwise convergence of a \Cheb series is known to be a weaker condition
(\cite[Theorem 3.1]{Rivlin1974}).
It holds true under a logarithmic condition on the modulus of continuity of $g$, 
i.e. $\lim_{n\to\infty}\omega(g;1/n)\log(n) = 0$, cf. \cite[Theorem 3.4]{Rivlin1974}.
A sufficient condition
for absolute summability of $(a_j)_{j=0}^\infty$
is that the first derivative of $g$ is of bounded variation.
This is in particular the case for functions in the Sobolev space
$W^{2,1}((-1,1))$ and also for all functions realized by ReLU NNs.
For such functions, 
it is shown in \cite[Theorem 7.2]{Trefethen2020}
that the \Cheb coefficients $(a_j)_{j\in\N}$ 
satisfy $\snorm{a_j} \leq C V j^{-2}$, 
with $V$ the variation of $g'$.

Focusing in this discussion on error bounds in $L^\infty((-1,1))$
(see Section \ref{sec:conclquant} for the full range of treated Sobolev norms),
the error $\norm[L^\infty((-1,1))]{ g - g_p }$
is determined by the smoothness of $g$, 
which can be expressed in terms of the decay of its \Cheb coefficients.
We denote $g_p = \sum_{j=0}^p c_j T_j$.
Here, the coefficients $(c_j)_{j=0}^p$ 
may differ from $(a_j)_{j=0}^p$ due to aliasing
and can be computed from function values in the \CC nodes
using the inverse fast Fourier transform,
as will be discussed in Proposition \ref{prop:cconed} Item \eqref{item:ccifftoned}.
They depend linearly on the function $g$ which is to be approximated 
and are given explicitly in terms of $(a_j)_{j=0}^\infty$ 
by \cite[Theorem 4.2]{Trefethen2020}.
The interpolation error can be estimated by
\begin{align*}
\norm[L^\infty((-1,1))]{ g - g_p }
	\leq &\, \sum_{j=0}^p \snorm{ a_j - c_j } \norm[L^\infty((-1,1))]{ T_j }
		+ \sum_{j=p+1}^\infty \snorm{ a_j } \norm[L^\infty((-1,1))]{ T_j }
	\\
	\leq &\, \sum_{j=0}^p \snorm{ a_j - c_j }
		+ \sum_{j=p+1}^\infty \snorm{ a_j }
	\leq   2 \sum_{j=p+1}^\infty \snorm{ a_j }
	,
\end{align*}
where the last step used \cite[Theorem 4.2]{Trefethen2020}.
Thus, if for some $\alpha>0$ holds that
$\snorm{ a_j } \leq C j^{-1-\alpha}$ for all $j\in\N$, 
then $\norm[L^\infty((-1,1))]{ g - g_p } \leq C p^{-\alpha}$.
As it is well-known, cf. e.g. \cite{Rivlin1974} and \cite[Sections 7 and 8]{Trefethen2020},
Sobolev smoothness of the represented function
is characterized by the decay of the \Cheb coefficients.
For example, as mentioned before,
$\alpha=1$ for functions 
whose first derivative is of bounded variation
due to \cite[Theorem 7.2]{Trefethen2020},
which includes functions in
$W^{2,1}((-1,1))$ 
and realizations of ReLU NNs.

The moderate growth of the Lebesgue constant of the \CC nodes 
in terms of the polynomial degree $p$
(with respect to the $L^\infty((-1,1))$-norm it is at most 
 $\tfrac{2}{\pi} \log(p+1) +1$,
 see Proposition \ref{prop:cconed} Item \eqref{item:cclebconstoned})
implies favorable numerical stability of Lagrange interpolation
in the \CC nodes.

This allows us to build a polynomial $u_p$ 
that is approximately equal to $g$
by interpolating an approximation $u$ of $g$ whose 
\Cheb coefficients are also absolutely summable,
e.g. a ReLU NN approximation of $g$.
It follows that 
$$
\begin{array}{rcl}
\norm[L^\infty((-1,1))]{ g - u_p } 
	&\leq& \norm[L^\infty((-1,1))]{ g - g_p } 
		+ \norm[L^\infty((-1,1))]{ g_p - u_p }
\\
	&\leq& \norm[L^\infty((-1,1))]{ g - g_p } 
		+ ( \tfrac{2}{\pi} \log(p+1) +1 ) \norm[L^\infty((-1,1))]{ g - u },
\end{array}
$$
i.e. in addition to the error $g - g_p$
there is a second term in which 
the approximation error $g-u$ is magnified by at most a factor $( \tfrac{2}{\pi} \log(p+1) +1 )$.

The computational complexity of interpolation includes 
at most $O(p)$ function evaluations and an additional $O(p (1+\log(p) ) )$ operations
for the execution of the fast Fourier transform.

If $u$ is the realization of a ReLU NN from Section \ref{sec:Polynomials},
then the \Cheb coefficients $(c_j)_{j=0}^p$ of $g_p$
can simply be read off from the output layer network weights,
exactly, and with complexity $p+1$.
In this case, there is no need to interpolate $u$ by $u_p$
in order to use its \Cheb coefficients.

%%%%%%%%%%%%%%%%%%%%%%%%%%%%%%%%%%%%%%%%%%%%%%%%%%%%%%%%%%%%%%%%%%%%%
\subsection{\Chebdash based ReLU emulation}
\label{sec:conclemul}
%%%%%%%%%%%%%%%%%%%%%%%%%%%%%%%%%%%%%%%%%%%%%%%%%%%%%%%%%%%%%%%%%%%%%
Assuming at hand the interpolant $g_p$,
for example through its chebfun object representation
(see Section~\ref{sec:conclchebfun} below),
we denote its relative error by 
$\eps_p := \norm[L^\infty((-1,1))]{ g - g_p }$ $/ \norm[L^\infty((-1,1))]{ g }$
and assume for simplicity that $\eps_p\leq1$.

The ReLU NN emulation $\tilde{g}_p$ of the chebfun object $g_p$ 
is constructed in two steps.
First, a ReLU DNN emulating the \Cheb polynomials $(T_j)_{j=1}^p$
is constructed;
this is done in Lemma~\ref{lem:relupolynominduction} below.
We denote its outputs by $(\widetilde{T}_j)_{j=1}^p$.
These ReLU emulations of the $T_j$ incur an error.
For all $0 < \delta < 1$, 
the constructed ReLU NN emulating 
$(\widetilde{T}_j)_{j=1}^p$
with 
$\norm[L^\infty((-1,1))]{ T_j - \widetilde{T}_j } \leq \delta < 1$ for all $j=1,\ldots,p$
has a network size which we show to be bounded from above by 
$C (p (1+\log(p)) + p \log(1/\delta) )$,
with $C>0$ independent of $p\in \N$ and of $\delta$.
The second step is to compute in the output layer the linear combination
$\tilde{g}_p := \sum_{j=0}^p c_j \widetilde{T}_j$.
We have $T_0 \equiv 1$,
so we can define $\widetilde{T}_0 := 1$ 
and add the constant term $c_0 \widetilde{T}_0$ 
as a bias in the output layer, without error.
Similarly, the linear polynomial $T_1$ can be emulated exactly,
i.e. $\widetilde{T}_1 := T_1$.
The emulation error of $\tilde{g}_p$ can therefore be estimated by
\begin{align*}
\norm[L^\infty((-1,1))]{ g_p - \tilde{g}_p }
	\leq \sum_{j=0}^p \snorm{c_j} \norm[L^\infty((-1,1))]{ T_j - \widetilde{T}_j }
	\leq \delta \sum_{j=2}^p \snorm{c_j}
.
\end{align*}
The favorable conditioning of the \Cheb polynomials 
will allow us to show in Lemma \ref{lem:chebclinftyoned} 
that
$\sum_{j=2}^p \snorm{c_j} \leq p^4 \norm[L^\infty((-1,1))]{ g_p }$
(we actually will prove this for all polynomials of degree at most $p$),
which implies that 
for all $0<\tilde{\eps}_p<1$
a relative emulation error tolerance
$\tilde{\eps}_p \geq \norm[L^\infty((-1,1))]{ g_p - \tilde{g}_p } / \norm[L^\infty((-1,1))]{ g_p }$
can be achieved by setting
$\delta = \tilde{\eps}_p / p^4$.
Substituting into the bound on the network size gives
$C (p (1+\log(p)) + p \log(1/\tilde{\eps}_p ) )$.}

Choosing $\tilde{\eps}_p\leq\eps_p$ provides
a ReLU NN emulation $\tilde{g}_p$ of the \Cheb interpolant $g_p$ of $g$
of degree $p$.
Its pointwise accuracy is proportional to that furnished by $g_p$, 
where $g_p$ can be constructed efficiently, e.g. by Chebfun algorithms:
\begin{align*}
\norm[L^\infty((-1,1))]{ g - \tilde{g}_p } 
	\leq &\, \norm[L^\infty((-1,1))]{ g - g_p } + \tilde{\eps}_p \norm[L^\infty((-1,1))]{ g_p }
	\\
	\leq &\, \norm[L^\infty((-1,1))]{ g - g_p } + \tilde{\eps}_p ( \norm[L^\infty((-1,1))]{ g } + \norm[L^\infty((-1,1))]{ g - g_p } )
	\\
	\leq &\, ( \eps_p + \tilde{\eps}_p (1+\eps_p) ) \norm[L^\infty((-1,1))]{ g }
	\leq 3 \eps_p \norm[L^\infty((-1,1))]{ g }
.
\end{align*}

Furthermore, when the \Cheb coefficients are known, 
the computational complexity for computing $\tilde{g}_p$
equals the size of the resulting NN,
which is at most $C( p (1+\log(p) ) + p \log(1/\tilde{\eps}_p) )$.
%%%%%%%%%%%%%%%%%%%%%%%%%%%%%%%%%%%%%%%%%%%%%%%%%%%%%%%%%%%%%%%%%%%%%
\subsection{Quantitative improvements over previous results}
\label{sec:conclquant}
%%%%%%%%%%%%%%%%%%%%%%%%%%%%%%%%%%%%%%%%%%%%%%%%%%%%%%%%%%%%%%%%%%%%%
We compare our \Chebdash based construction 
with the approach in \cite{OPS2020},
which was based on a monomial expansion 
$g_p = \sum_{j=0}^p t_j x^j$,
and a ReLU DNN emulation of the monomials $(x^j)_{j=1}^p$.
We denote its outputs by $(\widetilde{X}_j)_{j=1}^p$.
For all $0 < \delta < 1$, 
the ReLU NN emulating $(\widetilde{X}_j)_{j=1}^p$ constructed in 
\cite[Proofs of Lemma 4.5 and Proposition 4.2]{OPS2020}
with $\norm[L^\infty((-1,1))]{ x^j - \widetilde{X}_j } \leq \delta$ for all $j=1,\ldots,p$
has a network size which is bounded by 
$C (p (1+\log(p)) + p \log(1/\delta) )$.
Again, the second step is to compute in the output layer a linear combination
$\tilde{g}^{\mathrm{M}}_p := \sum_{j=0}^p t_j \widetilde{X}_j$.
The emulation error of $\tilde{g}^{\mathrm{M}}_p$ can be estimated by
\begin{align*}
\norm[L^\infty((-1,1))]{ g_p - \tilde{g}^{\mathrm{M}}_p }
	\leq \sum_{j=0}^p \snorm{t_j} \norm[L^\infty((-1,1))]{ x^j - \widetilde{X}_j }
	\leq \delta \sum_{j=2}^p \snorm{t_j}
.
\end{align*}
Bad conditioning of the monomials implies that
there does not exist $c>0$ such that for 
any integer $p\geq2$ and all polynomials of degree $p$, 
their Taylor coefficients can be estimated by 
$\sum_{j=2}^p \snorm{t_j} \leq \norm[L^\infty((-1,1))]{ g_p } p^c$.
On the contrary, 
upper bounds on 
$\sum_{j=2}^p \snorm{t_j} / \norm[L^\infty((-1,1))]{ g_p }$ 
which hold uniformly for all polynomials of degree $p$
generally grow exponentially with $p$.
For example, 
for $p\geq 2$ and $g = T_p$ 
we have $g_p = T_p$,
$\norm[L^\infty((-1,1))]{ g_p } = 1$ and $t_p = 2^{p-1}$,
which means that 
$\sum_{j=2}^p \snorm{t_j} \geq \snorm{t_p} = 2^{p-1}$
and thus 
$\sum_{j=2}^p \snorm{t_j} \geq 2^{p-1} \norm[L^\infty((-1,1))]{ g_p }$,
which implies that 
a relative emulation error 
$\tilde{\eps}_p := \norm[L^\infty((-1,1))]{ g_p - \tilde{g}^{\mathrm{M}}_p } / \norm[L^\infty((-1,1))]{ g_p }$
requires
$\delta \leq \tilde{\eps}_p / 2^{p-1}$.
Substituting into the bound on the network size gives
a term of the order $C p^2$.
In \cite[Proposition 4.6]{OPS2020},
it was shown that there exist constants $c,C>0$
such that it is sufficient to choose
$\delta = \tilde{\eps}_p C c^p$,
resulting in the upper bound
$C (p^2 + p \log(1/\tilde{\eps}_p ) )$
on the network size.\footnote{
The exponential growth of Taylor coefficients
holds more generally than for the example of $T_p = g = g_p$.
For the polynomial interpolant $g_p = \sum_{j=0}^p c_j T_j$ 
of any continuous function $g$,
we can expand $(T_j)_{j=0}^p$ in their monomial expansions
to obtain the monomial expansion $g_p = \sum_{j=0}^p t_j x^j$.
Using again that the $p$'th coefficient of $T_p$ equals $2^{p-1}$
and observing that the only term with the power $x^p$ comes from $c_j T_j$,
we see that $t_p = 2^{p-1} c_p$.
}

Compared to \cite{OPS2020}, 
we provide error bounds in terms of a wider range of Sobolev spaces
$W^{s,r}(I)$ for $0\leq s \leq 1$ and $1\leq r \leq \infty$
on arbitrary bounded intervals $I = (a,b)$,
keeping track of the natural scaling of Sobolev norms 
as a function of the interval length $b-a$.

%%%%%%%%%%%%%%%%%%%%%%%%%%%%%%%%%%%%%%%%%%%%%%%%%%%%%%%%%%%%%%%%%%%%%
\subsection{Chebfun}
\label{sec:conclchebfun}
%%%%%%%%%%%%%%%%%%%%%%%%%%%%%%%%%%%%%%%%%%%%%%%%%%%%%%%%%%%%%%%%%%%%%

Advantages of representing a function through its \Cheb coefficients
have also been leveraged in the so-called ``Chebfun''
software package, cf. {\tt http://www.chebfun.org}.
There too, 
functions are represented via \Cheb coefficients of the interpolant
in the \CC nodes, in a so-called ``chebfun'' object
(cf. \cite[Section 1.1]{chebguideone}).\footnote{
The label ``Chebfun'' (with capital C) refers to the software package \cite{chebguideone},
and chebfun, with lowercase c, to an object within this software 
(representing a piecewise polynomial function through its \Cheb coefficients).}
Chebfun has a broad range of functionalities and acts 
directly on the \Cheb coefficients.

Our DNN emulation result in Proposition \ref{prop:relupwpolynom} 
allows in particular what could be called 
``Chebfun - Neural Network'' interoperability via
transfer of all functionalities of Chebfun
to the presently considered ReLU NN context. 
Specifically:
 
(i) Given a ReLU NN, a chebfun object can be created through interpolation 
(with the Chebfun function {\tt chebfun}).
In case the NN was constructed as in Proposition \ref{prop:relupwpolynom},
then no interpolation is necessary.
The \Cheb coefficients can be read off directly from the output layer weights.
 
(ii) Any Chebfun functionality can be applied to the obtained chebfun object,
including those functions whose output is again a chebfun object.

(iii) Our NN emulation results
provide ReLU NN emulations of the output chebfun object.
$$
\xymatrix{
\{ \text{ ReLU NN } \} 
\ar^-{\text{(i) Interpolation}}[r] 
&
*+[r]{
\{ \text{ chebfun object} \} }
\ar^{\text{(ii) Chebfun functionalities}}[d]
\\
\{ \text{ ReLU NN } \} 
&
*+[r]{
\{ \text{ chebfun object } \} }
\ar^-{\text{(iii) Emulation}}[l]
}
$$
Similarly, ReLU NN functionalities may enhance Chebfun via
$$
\xymatrix{
\{ \text{ chebfun object } \} 
\ar^-{\text{(i) Emulation}}[r] 
&
*+[r]{
\{ \text{ ReLU NN} \} }
\ar^{\text{(ii) ReLU NN functionalities}}[d]
\\
\{ \text{ chebfun object } \} 
&
*+[r]{
\{ \text{ ReLU NN} \} }
\ar^-{\text{(iii) Interpolation}}[l]
}
$$
e.g., when
the functions afford only low regularity in classical
H\"older- or Besov spaces, but exhibit self-similar structure,
as is the case e.g. with fractal function systems.
We refer to the discussion in \cite[Section~7.3.1]{DauDeVetal22}: 
for certain fractal function classes with low Sobolev regularity and
\emph{dense singular support}, 
deep ReLU NNs can still afford exponential approximation rates in terms of the NN size, 
whereas polynomial approximations furnished by Chebfun
will converge only at low, algebraic rates in terms of the polynomial degree.

%%%%%%%%%%%%%%%%%%%%%%%%%%%%%%%%%%%%%%%%%%%%%%%%%%%%%%%%%%%%%%%%%%%%%%
\section{Preliminaries}
\label{sec:prelim}
%%%%%%%%%%%%%%%%%%%%%%%%%%%%%%%%%%%%%%%%%%%%%%%%%%%%%%%%%%%%%%%%%%%%%%

We prepare the presentation of our main results and their proofs
by recapitulating notation and classical results on direct and inverse
approximation rates of univariate polynomials.

%%%%%%%%%%%%%%%%%%%%%%%%%%%%%%%%%%%%%%%%%%%%%%%%%%%%%%%%%%%%%%%%%%%%%%
\subsection{Polynomial inverse inequalities}
\label{sec:inverseineq}
%%%%%%%%%%%%%%%%%%%%%%%%%%%%%%%%%%%%%%%%%%%%%%%%%%%%%%%%%%%%%%%%%%%%%%

The classical polynomial inverse inequality due to the Markovs, reads as follows.
\begin{lemma}[Markov inequality]
\label{lem:markovoned}
For all $k\in\N$ and $r\in[1,\infty]$ there exists a constant $C(k,r) > 0$ 
such that for all $p\in\N$ and $\widehat{v} \in \bbP_p([-1,1])$
\begin{align}
\label{eq:markovoned}
\normc[L^r((-1,1))]{ \tfrac{\dd^k}{\dd x^k} \widehat{v} } 
	\leq &\, C(k,r) \norm[L^r((-1,1))]{ \widehat{v} } \prod_{i=0}^{k-1} (p-i)^2
	.
\end{align}
For $k=1$, the constant satisfies the uniform bound $C(1,r) \leq 6e^{1+1/e}$ for all $r\in[1,\infty]$.
For all $k\in\N$ it holds that $C(k,r)\leq C(1,r)^k$.
In addition, 
with respect to the $L^\infty$-norm we have the sharper constants
$C(1,\infty) = 1$ and $C(2,\infty) = 1/3$.
\end{lemma}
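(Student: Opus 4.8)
The plan is to reduce the general statement to the first-order case $k=1$ by iterated differentiation, and then to treat $k=1$ separately for $r=\infty$ (classical and sharp) and for $1\le r<\infty$ (the substantive step producing the uniform constant). For the reduction, given $\widehat v\in\bbP_p$ write $v_j:=\tfrac{\dd^j}{\dd x^j}\widehat v\in\bbP_{p-j}$, and apply the $k=1$ inequality to $v_j$, whose degree is at most $p-j$, to get $\norm[L^r((-1,1))]{v_{j+1}}\le C(1,r)\,(p-j)^2\,\norm[L^r((-1,1))]{v_j}$. Multiplying these bounds for $j=0,\dots,k-1$ telescopes to $\norm[L^r((-1,1))]{v_k}\le C(1,r)^k\big(\prod_{i=0}^{k-1}(p-i)^2\big)\norm[L^r((-1,1))]{\widehat v}$, which is exactly \eqref{eq:markovoned} together with the claim $C(k,r)\le C(1,r)^k$. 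Thus everything hinges on the first-order inequality and on the admissible value of $C(1,r)$.

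For $k=1$ and $r=\infty$ this is the classical Markov inequality $\norm[L^\infty((-1,1))]{\widehat v'}\le p^2\norm[L^\infty((-1,1))]{\widehat v}$, with equality for the Chebyshev polynomial $\widehat v=T_p$ (since $T_p'(1)=p^2$ and $\norm[L^\infty((-1,1))]{T_p}=1$); this yields the sharp $C(1,\infty)=1$ and shows the exponent $2$ is optimal. For the sharper second-order constant I would not iterate — that only gives $C(1,\infty)^2=1$ — but instead invoke V.~A.~Markov's inequality $\norm[L^\infty((-1,1))]{\widehat v''}\le\tfrac13 p^2(p^2-1)\norm[L^\infty((-1,1))]{\widehat v}$, again extremal at $T_p$, which is the source of the leading constant $1/3$.

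The main obstacle is the case $1\le r<\infty$ with a constant \emph{uniform in $r$}. Here I would pass to trigonometric polynomials: for $\widehat v\in\bbP_p$ set $g(\theta):=\widehat v(\cos\theta)$, an even trigonometric polynomial of degree $p$ satisfying $g'(\theta)=-\sin\theta\,\widehat v'(\cos\theta)$. The Bernstein inequality for trigonometric polynomials, $\norm[L^r(\T)]{g'}\le p\,\norm[L^r(\T)]{g}$, holds simultaneously for all $r\in[1,\infty]$: it follows from M.~Riesz's interpolation formula, which expresses $g'$ as a finite linear combination of translates of $g$ whose coefficients sum in modulus to exactly $p$, so the triangle inequality in $L^r(\T)$ gives the bound with no $r$-dependence. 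The difficulty is the transfer back to the algebraic setting: under $x=\cos\theta$ the algebraic $L^r$-norms carry the weight $\sqrt{1-x^2}=\sin\theta$, and recovering $\widehat v'$ from $g'$ requires dividing by $\sin\theta$, which is singular at $x=\pm1$. This singularity is exactly the mechanism that turns the Bernstein factor $p$ into the Markov factor $p^2$, and it is where uniformity in $r$ is delicate.

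To overcome this I would split $(-1,1)$ into a central region $\{|x|\le\cos(\pi/(2p))\}$ and two boundary layers of length $\CO(p^{-2})$ near $\pm1$. On the central region the weight obeys $\sqrt{1-x^2}\gtrsim p^{-1}$, so the weighted trigonometric Bernstein estimate converts directly into the factor $p^2$ with a harmless constant. On the boundary layers I would bound $\widehat v'$ pointwise by the $L^\infty$ Markov estimate $p^2\norm[L^\infty((-1,1))]{\widehat v}$, then trade the $L^\infty$-norm back for an $L^r$-norm over the whole interval via a Nikolskii-type inequality $\norm[L^\infty((-1,1))]{\widehat v}\le C\,p^{2/r}\norm[L^r((-1,1))]{\widehat v}$; since each layer has measure $\CO(p^{-2})$, its contribution is commensurate with the central one. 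The explicit constant $6e^{1+1/e}$ emerges upon taking $r$-th roots in this estimate: the $p$-dependent factors produced by the split and by Nikolskii's inequality are raised to the power $1/r$, and bounding quantities of the form $p^{a/p}\le e^{a/e}$ (via $\sup_{t>0}t^{1/t}=e^{1/e}$) together with ratios such as $(1-1/p)^{-p}\le e$ yields a bound independent of both $p$ and $r$. I expect this balancing of the two regions, and the verification that the Nikolskii and weighted-Bernstein constants collapse to the stated uniform value after the $1/r$-th root, to be the technically demanding part of the argument.
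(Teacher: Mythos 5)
The paper does not actually prove this lemma: it assembles it from the literature (Hille--Szeg\"{o}--Tamarkin for the $L^r$ Markov inequality, Goetgheluck for the uniform value $6e^{1+1/e}$, DeVore--Lorentz for the $L^\infty$ statements), so your proposal has to be measured against those sources rather than against an in-paper argument. Your reduction of $k\ge 2$ to $k=1$ by iterating over $\der{j}\widehat v\in\bbP_{p-j}$ is exactly right and is precisely what produces both the product $\prod_{i=0}^{k-1}(p-i)^2$ and the bound $C(k,r)\le C(1,r)^k$; the $L^\infty$ cases via A.~Markov and V.~A.~Markov are the cited DeVore--Lorentz results. Your plan for $1\le r<\infty$ --- conjugate to trigonometric polynomials, obtain the $r$-uniform Bernstein bound $\normc[L^r(\T)]{g'}\le p\normc[L^r(\T)]{g}$ from the Riesz interpolation formula, and upgrade Bernstein to Markov by splitting off boundary layers of width $O(p^{-2})$ handled with the Nikolskii inequality \eqref{eq:invineqoned} --- is sound and is essentially the Hille--Szeg\"{o}--Tamarkin/Goetgheluck route. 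The exponents do close ($p^{(r-1)/r}$ from the weight on the central region, $p$ from Bernstein, $p^{1/r}$-type losses from Nikolskii, combining to $p^2$ uniformly in $r$), though note that converting $\normc[L^r(\T)]{g}$ back to $\normc[L^r((-1,1))]{\widehat v}$ introduces a second singular weight $(1-x^2)^{-1/2}$ on the right-hand side which must also be absorbed by the same splitting; your sketch does not mention this.

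Two caveats. First, the only non-classical quantitative content of the lemma is the uniform constant $C(1,r)\le 6e^{1+1/e}$, and this is exactly the step you do not carry out: you assert that the constant ``emerges'' after taking $r$-th roots and explicitly defer the balancing of the two regions. As written, your argument yields \eqref{eq:markovoned} with \emph{some} absolute constant uniform in $r$ and $p$, but not the stated one; pinning down $6e^{1+1/e}$ is the substance of Goetgheluck's paper and requires precisely the bookkeeping you postpone. (Since the paper itself obtains this number only by citation, this is a gap relative to the claim rather than a wrong turn.) Second, V.~A.~Markov gives $\normc[L^\infty((-1,1))]{\widehat v''}\le \tfrac13 p^2(p^2-1)\normc[L^\infty((-1,1))]{\widehat v}$, and since $p^2(p^2-1)=p^2(p-1)(p+1)>p^2(p-1)^2$ for $p\ge2$, it does not literally yield $C(2,\infty)=\tfrac13$ against the normalization $\prod_{i=0}^{1}(p-i)^2=p^2(p-1)^2$ of \eqref{eq:markovoned}: for $\widehat v=T_2$ one has $\normc[L^\infty((-1,1))]{T_2''}=4>\tfrac43$. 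This mismatch is inherited from the lemma statement itself and is harmless where the lemma is applied, but your derivation of that clause, like the statement, is only correct with the factor $\tfrac13 p^2(p^2-1)$ in place of $\tfrac13\prod_{i=0}^{1}(p-i)^2$.
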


\begin{proof} 
A combination of \cite[Section 3]{HST1937}, \cite{Goetgheluck1990},
\cite[Chapter 4, Theorem 1.4]{DeVoreLorentz1993}
and
\cite[Chapter 4, Equation (12.2)]{DeVoreLorentz1993},
see \cite[Lemma 2.1.1]{JOdiss} and its proof.
\end{proof}

We shall also require 
the following inverse inequality from, e.g., 
\cite[Chapter 4, Theorem 2.6]{DeVoreLorentz1993}.
It holds for all 
$0 < q \leq r \leq \infty$, $p\in\N$ 
and 
$\widehat{v} \in\bbP_p([-1,1])$:
\begin{align}
\label{eq:invineqoned}
\normc[L^r((-1,1))]{ \widehat{v} } 
	\leq \normc[L^q((-1,1))]{ \widehat{v} } \left((1+q) p^2\right)^{1/q - 1/r}
.
\end{align}
%%%%%%%%%%%%%%%%%%%%%%%%%%%%%%%%%%%%%%%%%%%%%%%%%%%%%%%%%%%%%%%%%%%%%
\subsection{Piecewise polynomials}
\label{sec:CPwLoned}
%%%%%%%%%%%%%%%%%%%%%%%%%%%%%%%%%%%%%%%%%%%%%%%%%%%%%%%%%%%%%%%%%%%%%
For $-\infty<a<b<\infty$,
consider a partition $\Tcal$ of the interval $I = (a,b)$ into $N\in\N$ elements, with nodes 
$a = x_{0} < x_{1} < \ldots 
< x_{N-1} < x_{N} = b$, 
elements $I_i = (x_{i-1},x_{i})$ and element sizes
$h_i =  x_{i}-x_{i-1}$ for $i\in\{1,\ldots,N\}$. 
Let $h  =  \max_{i\in\{1,\ldots,N\}} h_i$.
For a polynomial degree distribution 
$\bsp = (p_i)_{i\in\{1,\ldots,N\}} \in \N^N$ 
on $\Tcal$, we define 
$p_{\max} = \max_{i\in\{1,\ldots,N\}} p_i$ and the corresponding approximation space
\begin{equation}\label{eq:hpFESpc}
S_{\bsp}(I,\Tcal)  =  \{ v \in C^0(I): 
                v|_{I_i} \in \bbP_{p_i}(I_i) \text{ for all } i \in \{1,\ldots,N\} \} .
\end{equation}

Our $hp$-approximations in Section
\ref{sec:hpFEM}
will be based on geometrically graded partitions
of $I = (0,1)$ which are refined towards $x=0$.
In the notation from \cite{OPS2020}, these are defined as follows:
for $N\in\N$ and $\sigma\in(0,1)$, 
let
$x_{0} \coloneqq 0$ and 
$x_{i}\coloneqq \sigma^{N-i}$ for $i\in\{1,\ldots,N\}$.
Then $\Tcal_{\sigma,N}$ is the partition of $I$ 
into $N$ intervals $\{I_{\sigma,i}\}_{i=1}^{N}$,
where $I_{\sigma,i}\coloneqq(x_{i-1}, x_{i})$.
%
%%%%%%%%%%%%%%%%%%%%%%%%%%%%%%%%%%%%%%%%%%%%%%%%%%%%%%%%%%%%%%%%%%%%%
\subsection{Weighted function spaces on $I$}
\label{sec:functionsponed}
%%%%%%%%%%%%%%%%%%%%%%%%%%%%%%%%%%%%%%%%%%%%%%%%%%%%%%%%%%%%%%%%%%%%%
We introduce finite order Sobolev spaces and analytic classes of
Sobolev spaces in the open, bounded interval $I$,
with and without weight.
These spaces will be employed to describe 
the regularity of functions 
in ReLU DNN emulations.

%%%%%%%%%%%%%%%%%%%%%%%%%%%%%%%%%%%%%%%%%%%%%%%%%%%%%%%%%%%%%%%%%%%%%
\subsubsection{Finite order Sobolev spaces}
\label{sec:fctspcfin}
%%%%%%%%%%%%%%%%%%%%%%%%%%%%%%%%%%%%%%%%%%%%%%%%%%%%%%%%%%%%%%%%%%%%%
For a bounded open interval $I\subset\R$ 
and $u:I\to\R$, 
for all $k\in\N$
we write
\begin{gather*}
\der{k}u 
	= \frac{\dd^{k}u}{\dd x^{k}}
	,
\end{gather*}
and also, for all $r\in[1,\infty)$ and all $k\in\N_0$,
\begin{gather*}
\norm[W^{k,r}(I)]{u} = \left( \sum_{k'=0}^k \normc[L^r(I)]{ \der{k'}u }^r \right)^{1/r}
,
\qquad
\snorm[W^{k,r}(I)]{u} = \normc[L^r(I)]{ \der{k}u }
,
\\
\norm[W^{k,\infty}(I)]{u} = \max_{k'=0}^k \normc[L^\infty(I)]{ \der{k'}u }
,
\qquad
\snorm[W^{k,\infty}(I)]{u} = \normc[L^\infty(I)]{ \der{k}u }
,
\end{gather*}
and for $r=2$ we write
$H^k(I) := W^{k,2}(I)$.

Although we are mainly interested in real-valued functions, 
some of the error estimates are given in 
Lebesgue spaces of complex-valued functions
defined on some open set $\Ecal \subset \IC$ containing $I$.
For such spaces, the codomain will be indicated explicitly, 
e.g. $L^\infty(\Ecal;\IC)$.
%%%%%%%%%%%%%%%%%%%%%%%%%%%%%%%%%%%%%%%%%%%%%%%%%%%%%%%%%%%%%%%%%%%%%
\subsubsection{Finite order weighted Sobolev spaces}
\label{sec:FinOrdWgtSobSpc}
%%%%%%%%%%%%%%%%%%%%%%%%%%%%%%%%%%%%%%%%%%%%%%%%%%%%%%%%%%%%%%%%%%%%%
To allow for functions with point singularities, but otherwise smooth,
we next introduce a continuous weight function
defined in terms of a finite number of singular points.
We will consider spaces of functions which may be singular in given points 
$A_1,\ldots,A_{\Nsing} \in \R$ for $\Nsing\in\N$.
The $A_i$ will usually (but not always) be assumed to belong to $\overline{I}$.
For a \emph{weight sequence} $\bsbeta = (\beta_1,\ldots,\beta_{\Nsing})\in \R^{\Nsing}$,
we consider the \emph{weight function}
\begin{align}
\label{eq:wedef}
\psi_{\bsbeta}(x) = \prod_{i=1}^{\Nsing} \min\{1,\dist{}(x , A_i) \}^{\beta_i}
,
\qquad
x\in\R
.
\end{align}
If ${\Nsing}=1$, we will write $\beta$ instead of $\bsbeta$.

For $r\in[1,\infty)$ and $m,\ell\in\N_0$ with $m\geq \ell$,
the space $\Wwe{m,\ell}{r}{\bsbeta}(I)$ is defined as
the closure of $C^\infty(I)$ with respect to the 
weighted Sobolev norm 
\begin{align*}
\norm[\Wwe{m,\ell}{r}{\bsbeta}(I)]{u}
	&= \left( \norm[W^{\ell-1,r}(I)]{u}^r
		+ \sum_{k=\ell}^m \normc[L^r(I)]{\psi_{\bsbeta+k-\ell} \der{k}u }^r \right)^{1/r}
	,
	&&
	\text{ if } \ell>0,
	\\
\norm[\Wwe{m,0}{r}{\bsbeta}(I)]{u}
	&= \left( \sum_{k=0}^m \normc[L^r(I)]{\psi_{\bsbeta+k} \der{k}u }^r \right)^{1/r}
	,
	&&
	\text{ if } \ell=0.
\end{align*}
The corresponding seminorm equals
\begin{align*}
\snorm[\Wwe{m,\ell}{r}{\bsbeta}(I)]{u}
	&= \normc[L^r(I)]{\psi_{\bsbeta+m-\ell} \der{m}u }
.
\end{align*}
For $r=\infty$, these norms are understood w.r. to the $L^\infty(I)$- and $\| \circ \|_{\infty}$-norms.
In the Hilbertian case $r=2$, we write 
$\Hwe{m,\ell}{\bsbeta}(I) := \Wwe{m,\ell}{2}{\bsbeta}(I)$.
For large weight exponents $\bsbeta>0$ in \eqref{eq:wedef},
the weight function $\psi_{\bsbeta}$ is small close to the singular points $A_i$,
i.e. the function $u$ and its derivatives 
are allowed to grow strongly close to the singularity.

%%%%%%%%%%%%%%%%%%%%%%%%%%%%%%%%%%%%%%%%%%%%%%%%%%%%%%%%%%%%%%%%%%%%%
\subsubsection{Finite order fractional Sobolev spaces}
\label{sec:FinOrdFrc}
%%%%%%%%%%%%%%%%%%%%%%%%%%%%%%%%%%%%%%%%%%%%%%%%%%%%%%%%%%%%%%%%%%%%%
In Section \ref{sec:relupwpolynom} we will briefly comment on 
error bounds with respect to norms of fractional Sobolev spaces 
$W^{s,r}(I)$ for $s\in(0,1)$ and $r\in[1,\infty]$ on a bounded interval $I = (a,b)$,
for $-\infty<a<b<\infty$.
We recall that
\begin{align*}
\begin{cases}
\norm[W^{s,r}(I)]{ u } 
	:= \left( \norm[L^r(I)]{ u }^r + \int_{I\times I} \tfrac{ \snorm{ f(x) - f(y) }^r }{ \snorm{ x - y }^{sr+1} } \dd x \dd y 
		\right)^{1/r}
	& \text{ if } r < \infty,
	\\
\norm[W^{s,\infty}(I)]{ u } 
	:= \max\left\{ \norm[L^\infty(I)]{ u } , \esssup_{x,y\in I, x\neq y} \tfrac{ \snorm{ f(x) - f(y) } }{ \snorm{ x - y }^{s} } \right\}
	& \text{ if } r = \infty.
\end{cases}
\end{align*}

%%%%%%%%%%%%%%%%%%%%%%%%%%%%%%%%%%%%%%%%%%%%%%%%%%%%%%%%%%%%%%%%%%%%%
\subsubsection{Weighted Gevrey classes}
\label{sec:fncspcAn}
%%%%%%%%%%%%%%%%%%%%%%%%%%%%%%%%%%%%%%%%%%%%%%%%%%%%%%%%%%%%%%%%%%%%%
\emph{Weighted Gevrey classes}
are sets of functions on $I=(0,1)$ 
which may be singular in $x=0$,
but are smooth (not necessarily analytic) away from $0$.
We refer to \cite{FS2020,CPS2011} 
and the references there for such spaces.
They contain as particular cases also
weighted analytic functions (which satisfy 
Equation \eqref{eq:defGevrey} below for $\delta=1$).
We define for $\delta\geq1$, $\ell\in\N_0$ and $\beta\in(0,1)$
the Gevrey class
$\Gcal^{\ell,\delta}_\beta(I)$ to be the set of functions $u$ in 
$\bigcap_{k\geq\ell} \Hwe{k,\ell}{\beta}(I)$
for which there exist constants $\Cu[u],\du[u]>0$ such that
\begin{align}
\label{eq:defGevrey}
\forall k\geq\ell:\quad
	\snormc[\Hwe{k,\ell}{\beta}(I)]{u}
	\leq \Cu[u] \du[u]^{k-\ell} ((k-\ell)!)^\delta.
\end{align}
Rates of their ReLU NN approximations will be studied in Section \ref{sec:hpFEM}.
%%%%%%%%%%%%%%%%%%%%%%%%%%%%%%%%%%%%%%%%%%%%%%%%%%%%%%%%%%%%%%%%%%%%%
\subsection{Polynomial interpolation in \CC points}
\label{sec:cconed}
%%%%%%%%%%%%%%%%%%%%%%%%%%%%%%%%%%%%%%%%%%%%%%%%%%%%%%%%%%%%%%%%%%%%%
Due to its central role in our NN constructions,
we recall basic properties of interpolation in the \CC points,
including stability bounds and how  
\Cheb coefficients of the interpolant
can be computed from function values in the \CC points
using the inverse fast Fourier transform (IFFT).
This is done in Proposition \ref{prop:cconed}.
In the proof of Proposition \ref{cor:relupolynomH10}, 
we need estimates on the \Cheb coefficients of polynomials in $\bbP_p((-1,1))$
in terms of the $L^\infty((-1,1))$-norm of the polynomial.
These are provided in Lemma \ref{lem:chebclinftyoned}.

For $p\in\N$ we consider the $p+1$ \CC points\footnote{
In the literature, the nomenclature for these points is not consistent.
Besides the term ``\CC points'', these points 
are sometimes referred to as ``\Cheb points of the second kind'',
or even simply as ``\Cheb points''.
They are not to be confused with the $p$ points that are the roots of the \Cheb polynomial $T_p$,
which lie in $(-1,1)$. These points
are sometimes also referred to as ``\Cheb points'', or as ``\Cheb points of the first kind''.}
in $[-1,1]$,
which are the extrema of the \Cheb polynomial $T_p$.
We denote them by
\begin{align}
\label{eq:ccpointsoned}
\hat{x}^{\cc,p}_{j} := \cos(j\pi/p),
\qquad
j\in\{0,\ldots,p\}
.
\end{align}
It will be convenient to extend this definition to all $j\in\Z$,
even though this does not introduce new points, as
$\{\hat{x}^{\cc,p}_{j} : j\in\{0,\ldots,p\} \} = \{\hat{x}^{\cc,p}_{j} : j\in\Z \}$.
In particular, for $j=p+1,\ldots,2p$ it holds that $\hat{x}^{\cc,p}_{j} = \hat{x}^{\cc,p}_{2p-j}$.

The nodal interpolant 
$\hccp{p}{\widehat{v}} \in \bbP_p([-1,1])$ 
in the \CC grid 
of a continuous function $\hat{v} \in C^0([-1,1])$
is defined by
\begin{align}
\label{eq:ccinterpoloned}
\widehat{v} ( \hat{x}^{\cc,p}_{j} ) = \hccp{p}{\widehat{v}} ( \hat{x}^{\cc,p}_{j} ),
\qquad
\text{ for all } j\in\{0,\ldots,p\}
.
\end{align}

For $-\infty<a<b<\infty$ and $I=[a,b]$, 
we will consider the images of $(\hat{x}^{\cc,p}_{j})_{j\in\{0,\ldots,p\}}$
under the affine map $F:x\mapsto \tfrac{a+b}{2} + \tfrac{b-a}{2} x$,
and denote them by
\begin{align}
\label{eq:ccinterpolonedaffine}
x^{\cc,p}_{I,j} = \tfrac{a+b}{2} + \tfrac{b-a}{2} \cos(j\pi/p),
\qquad
\text{ for all } j\in\{0,\ldots,p\}
.
\end{align}
For every $v\in C^0(\overline{I})$ we denote by 
$\ccp{p}{I}{v} := ( \hccp{p}{v \circ F} ) \circ F^{-1} \in \bbP_p(I)$
the interpolant in these points.
	
The following proposition recalls properties of $\hccp{p}{}$.

\begin{proposition}
\label{prop:cconed}
The Lagrange interpolation operator $\hccp{p}{}: C^0([-1,1]) \to \bbP_p([-1,1])$ defined in \eqref{eq:ccinterpoloned}
 has the following properties:
\begin{enumerate}[(i)]
\item \label{item:ccprojoned}
For all $p\in\N$ and $\widehat{v}\in\bbP_p([-1,1])$, it holds that $\hccp{p}{\widehat{v}} = \widehat{v}$.
\item \label{item:cclebconstoned}
For all $p,s\in\N$, $p\geq s$,
the \emph{Lebesgue constants} of $\hccp{p}{}$ satisfy the following bounds:
\begin{align*}
\norm[L^\infty,L^\infty]{\hccp{p}{}} 
	:= &\, \norm[L^\infty((-1,1)),L^\infty((-1,1))]{\hccp{p}{}} 
	\leq (\tfrac{2}{\pi}\log(p+1) +1),
	\\
\norm[W^{s,\infty},W^{s,\infty}]{\hccp{p}{}}	
	:= &\, \norm[W^{s,\infty}((-1,1)),W^{s,\infty}((-1,1))]{\hccp{p}{}} 
	\leq p^{2s} \norm[L^\infty,L^\infty]{\hccp{p}{}}
.
\end{align*}
\item \label{item:ccifftoned}
For all $p\in\N$ and $\widehat{v}\in C^0([-1,1])$, 
the \Cheb coefficients of the interpolant $\hccp{p}{\widehat{v}}$
can be computed with the inverse discrete Fourier transform:
\begin{align*}
\hccp{p}{\widehat{v}} = \sum_{k\in\{0,\ldots,p\}} \widehat{v}_{k,p} T_k,
\end{align*}
where
\begin{align*}
\widehat{v}_{k,p} := &\, 2^{\mathds{1}_{\{1,\ldots,p-1\}}(k)} 
		\left( \ifft \left( \left( \widehat{v} \left( \hat{x}^{\cc,p}_{j} \right) \right)_{j\in\{0,\ldots,2p-1\}} 
		\right) \right)_{k}
	\\
	= &\, \frac{2^{\mathds{1}_{\{1,\ldots,p-1\}}(k)}}{2p} \sum_{j\in\{0,\ldots,2p-1\}} 
		\widehat{v}( \cos(j\pi/p) ) \cos(kj\pi/p)
.
\end{align*}
\end{enumerate}
\end{proposition}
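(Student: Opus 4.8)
The plan is to treat the three items in turn, as they rest on essentially independent ingredients. Item \eqref{item:ccprojoned} is a consequence of the well-posedness of polynomial interpolation: the $p+1$ nodes $\hat{x}^{\cc,p}_{j}=\cos(j\pi/p)$, $j\in\{0,\ldots,p\}$, are pairwise distinct because $\cos$ is strictly decreasing on $[0,\pi]$ and the arguments $j\pi/p$ are distinct points of $[0,\pi]$. Hence for each $\widehat{v}\in C^0([-1,1])$ there is a unique element of $\bbP_p([-1,1])$ matching $\widehat{v}$ at these nodes, so $\hccp{p}{}$ is well defined and linear. If moreover $\widehat{v}\in\bbP_p([-1,1])$, then $\widehat{v}$ itself is an interpolant, and uniqueness forces $\hccp{p}{\widehat{v}}=\widehat{v}$; in particular $\hccp{p}{}$ is a linear projection onto $\bbP_p([-1,1])$.

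For Item \eqref{item:cclebconstoned}, the $L^\infty$ bound $\norm[L^\infty,L^\infty]{\hccp{p}{}}\le \tfrac{2}{\pi}\log(p+1)+1$ is the classical estimate for the Lebesgue constant of the \CC nodes, which I would take from the literature (e.g.\ \cite{Trefethen2020}) rather than reprove. The weighted bound then follows from an inverse estimate. For any $\widehat{v}$, the interpolant $w:=\hccp{p}{\widehat{v}}$ lies in $\bbP_p([-1,1])$, so the Markov inequality of Lemma \ref{lem:markovoned}, with the sharp constant $C(1,\infty)=1$ and $C(k,\infty)\le C(1,\infty)^k=1$, gives $\normc[L^\infty((-1,1))]{\der{k}w}\le \prod_{i=0}^{k-1}(p-i)^2\,\norm[L^\infty((-1,1))]{w}\le p^{2k}\norm[L^\infty((-1,1))]{w}$ for $0\le k\le s$. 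Taking the maximum over $k$ yields $\norm[W^{s,\infty}((-1,1))]{w}\le p^{2s}\norm[L^\infty((-1,1))]{w}$, and combining with the $L^\infty$ operator-norm bound together with $\norm[L^\infty((-1,1))]{\widehat{v}}\le\norm[W^{s,\infty}((-1,1))]{\widehat{v}}$ gives the claim after dividing by $\norm[W^{s,\infty}((-1,1))]{\widehat{v}}$ and taking the supremum.

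Item \eqref{item:ccifftoned} is the main technical step. The plan is to pass to the trigonometric picture via $x=\cos\theta$, under which $T_k(\cos\theta)=\cos(k\theta)$, so that writing $\hccp{p}{\widehat{v}}=\sum_{k=0}^p\widehat{v}_{k,p}T_k$ turns the interpolation conditions \eqref{eq:ccinterpoloned} into the requirement that the even cosine polynomial $\sum_{k=0}^p\widehat{v}_{k,p}\cos(k\theta)$ match the even, $2\pi$-periodic function $\theta\mapsto\widehat{v}(\cos\theta)$ at the equispaced nodes $\theta_j=j\pi/p$, $j\in\{0,\ldots,p\}$. Using the symmetry $\hat{x}^{\cc,p}_{2p-j}=\hat{x}^{\cc,p}_{j}$ to extend the sampled data to the full period $j\in\{0,\ldots,2p-1\}$, the coefficients are recovered from the length-$2p$ inverse discrete Fourier transform; because the data are real and symmetric, the exponential transform collapses to the cosine sum displayed in the statement. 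The verification then reduces to the discrete orthogonality relations of $\{\cos(kj\pi/p)\}_{j=0}^{2p-1}$, which show that this cosine polynomial reproduces the nodal values and hence equals the interpolant.

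The part requiring care — and what I expect to be the main obstacle — is the normalization, specifically the endpoint factor $2^{\mathds{1}_{\{1,\ldots,p-1\}}(k)}$. It reflects that in the length-$2p$ transform each interior frequency $1\le k\le p-1$ is represented twice (as $k$ and as $2p-k$, which alias to the same cosine), whereas the constant mode $k=0$ and the Nyquist mode $k=p$ occur only once, so that the discrete orthogonality weights at the two endpoints differ from those in between. Matching these weights to the chosen $\ifft$ normalization without an off-by-a-factor error is the delicate point, but once the orthogonality relation is stated with the correct endpoint weighting the identity follows by a direct computation; alternatively, the underlying aliasing identity can be invoked from \cite[Theorem 4.2]{Trefethen2020}.
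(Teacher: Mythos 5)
Your proposal is correct and takes essentially the same route as the paper, which proves (i) and (iii) by citing the classical references (Trefethen for the aliasing/projection property, Rivlin for the IFFT identity) and obtains the second bound in (ii) exactly as you do, from the $L^\infty$ Lebesgue-constant bound combined with Markov's inequality applied to the interpolant $\hccp{p}{\widehat{v}}\in\bbP_p([-1,1])$. The only cosmetic difference is that you prove (i) by uniqueness of Lagrange interpolation and sketch the discrete-orthogonality computation behind (iii) rather than deferring both entirely to the literature; your identification of the endpoint normalization $2^{\mathds{1}_{\{1,\ldots,p-1\}}(k)}$ as the delicate point is accurate and your fallback citations would close that step.
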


\begin{proof}
\begin{enumerate}[(i)]
\item 
It follows e.g. from
\cite[Equation (4.9)]{Trefethen2020},
with $0 = a_{n+1} = a_{n+2} = \ldots$ in the notation of \cite{Trefethen2020}.
\item
The first bound is stated e.g. in \cite[Theorem 15.2]{Trefethen2020}.
There also a brief history of this result is provided.
The second estimate 
follows from the first one using Markov's inequality, Lemma \ref{lem:markovoned}.
\item 
This is shown in \cite[Theorem 3.13]{Rivlin1974}.
\end{enumerate}
\end{proof}

To estimate the neural network approximation error in Section \ref{sec:basicnn}, 
it will be important to have a stability bound 
on the sum of the absolute values of the coefficients in Item \eqref{item:ccifftoned}.
In Lemma \ref{lem:chebclinftyoned}, 
we estimate the sum of the absolute values of \Cheb coefficients 
of a polynomial in terms of its $L^\infty((-1,1))$-norm.

\begin{lemma}
\label{lem:chebclinftyoned}
For all $p\in\N$ and $\widehat{v} = \sum_{\ell=0}^p \widehat{v}_\ell T_\ell \in\bbP_p([-1,1])$, 
defining $\widehat{v}_\ell := 0$ for $\ell>p$, 
it holds that
\begin{align}
\label{eq:chebclinftyoned} 
\sum_{\ell\geq 2} |\widehat{v}_\ell| 
	\leq p^4 \min_{\widehat{w}\in\bbP_p([-1,1]): \atop \der{2}\widehat{w} = \der{2}\widehat{v}} 
		\normc[L^\infty((-1,1))]{\widehat{w}}
	.
\end{align}
\end{lemma}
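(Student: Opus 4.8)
The plan is to reduce the claim to a direct bound on individual \Cheb coefficients. The first observation is that the left-hand side of \eqref{eq:chebclinftyoned} is unaffected by the minimisation on the right. Indeed, since $T_0$ and $T_1$ are the only \Cheb polynomials of degree at most $1$, any $\widehat w\in\bbP_p([-1,1])$ with $\der{2}\widehat w = \der{2}\widehat v$ differs from $\widehat v$ only by an element of $\bbP_1([-1,1]) = \spann\{T_0,T_1\}$; writing $\widehat w = \sum_{\ell=0}^p \widehat w_\ell T_\ell$ we therefore have $\widehat w_\ell = \widehat v_\ell$ for all $\ell\geq 2$, and hence $\sum_{\ell\geq 2}\snorm{\widehat v_\ell} = \sum_{\ell\geq 2}\snorm{\widehat w_\ell}$. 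It thus suffices to prove the coefficient bound $\sum_{\ell\geq 2}\snorm{\widehat w_\ell}\leq p^4 \norm[L^\infty((-1,1))]{\widehat w}$ for every $\widehat w\in\bbP_p([-1,1])$, apply it to each competitor $\widehat w$, and then take the infimum over $\widehat w$ on the right. The infimum is attained, since $(a,b)\mapsto\norm[L^\infty((-1,1))]{\widehat v + aT_0 + bT_1}$ is continuous and coercive.

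For the coefficient bound the key input is Proposition \ref{prop:cconed}. By Item \eqref{item:ccprojoned} the interpolant reproduces $\widehat w$, i.e. $\hccp{p}{\widehat w} = \widehat w$, and comparing this with the \Cheb expansion of $\widehat w$ shows that the coefficients $\widehat w_k$ coincide with the interpolation coefficients $\widehat w_{k,p}$ furnished by the IFFT formula in Item \eqref{item:ccifftoned}. Reading off that formula, each $\widehat w_{k,p}$ is, up to a factor at most $2$, the average of the $2p$ values $\widehat w(\cos(j\pi/p))$ against the weights $\cos(kj\pi/p)$, all bounded by $1$, so that $\snorm{\widehat w_k}\leq 2\norm[L^\infty((-1,1))]{\widehat w}$ for every $k$. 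Summing over the at most $p-1$ indices $k\in\{2,\ldots,p\}$ gives $\sum_{\ell\geq 2}\snorm{\widehat w_\ell}\leq 2(p-1)\norm[L^\infty((-1,1))]{\widehat w}$, and since $2(p-1)\leq p^4$ for $p\geq 2$ (the case $p=1$ being vacuous) the desired bound follows with room to spare.

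Combining the two steps yields \eqref{eq:chebclinftyoned}. I expect no serious obstacle: the only point requiring care is the invariance observation linking the constraint $\der{2}\widehat w=\der{2}\widehat v$ to equality of the \Cheb coefficients of index $\ell\geq 2$, which is exactly what lets the sup-norm on the right be evaluated at the \emph{small} representative $\widehat w$ rather than at $\widehat v$ itself. An alternative route that reproduces the stated exponent $p^4$ less wastefully would first bound $\sum_{\ell\geq2}\snorm{\widehat v_\ell}$ by a constant multiple of $\norm[L^\infty((-1,1))]{\der{2}\widehat v}$: applying twice the \Cheb integration recurrence $\widehat v_\ell = \tfrac{1}{2\ell}(b_{\ell-1}-b_{\ell+1})$ (with $(b_k)$ the coefficients of $\der{1}\widehat v$) expresses each $\widehat v_\ell$ through the coefficients of $\der{2}\widehat v$, whose moduli are again at most $2\norm[L^\infty((-1,1))]{\der{2}\widehat v}$, the resulting double sum converging thanks to the $\ell^{-2}$ gain; one then invokes the second-order Markov inequality (Lemma \ref{lem:markovoned}, legitimate since $\der{2}\widehat w=\der{2}\widehat v$) to replace $\norm[L^\infty((-1,1))]{\der{2}\widehat v}$ by at most $Cp^2(p-1)^2\norm[L^\infty((-1,1))]{\widehat w}\leq p^4\norm[L^\infty((-1,1))]{\widehat w}$. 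As the elementary summation already delivers a sharper $O(p)$ bound, I would present the first argument as the main proof.
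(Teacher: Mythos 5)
Your proof is correct, and it takes a genuinely different route from the paper's. The paper bounds each coefficient by $\snorm{\widehat{v}_\ell} \leq \tfrac{2}{\pi \ell(\ell-1)} \normc[L^1((-1,1))]{\der{2}\widehat{v}/\sqrt{1-x^2}}$ using \cite[Theorem 4.2]{MR2403058}, telescopes the sum over $\ell\geq 2$, and then converts the weighted $L^1$-norm of $\der{2}\widehat{v}=\der{2}\widehat{w}$ into $\normc[L^\infty((-1,1))]{\widehat{w}}$ via H\"older and the second-order Markov inequality with the sharp constant $C(2,\infty)=1/3$, arriving at $\tfrac{2}{3}p^2(p-1)^2 \leq p^4$; the minimum over $\widehat{w}$ enters automatically because the coefficient bound only sees the second derivative. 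You instead handle the minimum up front by the (correct) observation that the constraint $\der{2}\widehat{w}=\der{2}\widehat{v}$ forces $\widehat{w}_\ell=\widehat{v}_\ell$ for all $\ell\geq 2$, and then use the elementary bound $\snorm{\widehat{w}_k}\leq 2\normc[L^\infty((-1,1))]{\widehat{w}}$ on each individual \Cheb coefficient (read off from the IFFT formula of Proposition \ref{prop:cconed}\eqref{item:ccifftoned} together with Item \eqref{item:ccprojoned}, or equivalently from the continuous orthogonality relation). Summing the at most $p-1$ terms gives $2(p-1)$, which is not only $\leq p^4$ but substantially sharper in $p$ than the paper's $\tfrac{2}{3}p^2(p-1)^2$; your argument also avoids both the coefficient-decay estimate of \cite{MR2403058} and Markov's inequality. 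The only points requiring care --- that the reduction to an arbitrary competitor $\widehat{w}$ is legitimate, and that the \CC points $\pm1$ lie on the boundary of the open interval (harmless by continuity of polynomials) --- you have addressed. Your concluding remark that the "alternative route" via the integration recurrence and second-order Markov reproduces the exponent $4$ is essentially a description of the paper's actual proof.
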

\begin{proof}
We use \cite[Theorem 4.2]{MR2403058} with $k=1$ in the notation of the reference 
(note that slightly sharper estimates are given in \cite[Theorem 2.1]{MR3588586} 
and \cite[Theorem 7.1]{Trefethen2020})
\begin{align*}
|\widehat{v}_\ell| \leq &\, \tfrac{2}{\pi \ell(\ell-1)} 
	\normc[L^1((-1,1))]{\frac{\der{2}\widehat{v}}{\sqrt{1-x^2}}}, 
	\qquad \ell\geq 2,
	\\
\sum_{\ell\geq 2} |\widehat{v}_\ell|
	\leq &\, \tfrac{2}{\pi} \normc[L^1((-1,1))]{\frac{\der{2}\widehat{v}}{\sqrt{1-x^2}}} 
		\sum_{\ell\geq 2} \left( \tfrac{1}{\ell-1} - \tfrac{1}{\ell} \right)
	\leq \tfrac{2}{\pi} \normc[L^1((-1,1))]{\frac{\der{2}\widehat{v}}{\sqrt{1-x^2}}}.
\end{align*}
For $p=1$ both sides of the inequalities vanish.
Using Markov's inequality (Lemma \ref{lem:markovoned}),
we obtain
\begin{align*}
\normc[L^1((-1,1))]{\frac{\der{2}\widehat{v}}{\sqrt{1-x^2}}}
	\leq &\, \normc[L^1((-1,1))]{\frac{1}{\sqrt{1-x^2}}} \normc[L^\infty((-1,1))]{\der{2}\widehat{v}}
	\\
	\leq &\, \pi \tfrac13 p^2 (p-1)^2 \min_{\widehat{w}\in\bbP_p: \atop \der{2}\widehat{w} = \der{2}\widehat{v}} 
		\normc[L^\infty((-1,1))]{\widehat{w}}.
\end{align*}
Together with the previous estimate, 
this shows \eqref{eq:chebclinftyoned}.
\end{proof}

%%%%%%%%%%%%%%%%%%%%%%%%%%%%%%%%%%%%%%%%%%%%%%%%%%%%%%%%%%%%%%%%%%%
\section{ReLU NN approximation of univariate functions}
\label{sec:basicnn}
%%%%%%%%%%%%%%%%%%%%%%%%%%%%%%%%%%%%%%%%%%%%%%%%%%%%%%%%%%%%%%%%%%%%%
In this section we consider NNs with the
ReLU activation function
$
	\varrho: \R \to \R: x \mapsto \max\{0, x\}
$
for the approximation of univariate functions on bounded intervals.
The ReLU NN approximation of continuous, piecewise polynomial functions
allows us to transfer existing results 
on approximation by piecewise polynomial functions
to obtain approximation results for ReLU NNs.
We discuss several such approximation results in Section \ref{sec:FESpaces}.
Combined with \cite[Section 6]{OPS2020},
these also give efficient approximations to high-dimensional, radially symmetric functions,
as developed in \cite[Sections 6.1--6.3]{OPS2020}.
%%%%%%%%%%%%%%%%%%%%%%%%%%%%%%%%%%%%%%%%%%%%%%%%%%%%%%%%%%%%%%%%%%%%%
\subsection{ReLU neural network calculus}
\label{sec:NNReLuCalc}
%%%%%%%%%%%%%%%%%%%%%%%%%%%%%%%%%%%%%%%%%%%%%%%%%%%%%%%%%%%%%%%%%%%%%
As usual (e.g. \cite{PV2018,OPS2020}), 
we define a NN as an $L$-tuple of weight-bias pairs.
A NN realizes a function, called \emph{realization} of the NN,
as composition of parameter-dependent affine transformations
and a nonlinear activation function $\varrho$.
We briefly recall the NN formalism from \cite[Section 2]{OPS2020}.
\begin{definition}[Neural Network, Realization $\mathrm R$ {\cite[Definition 2.1]{OPS2020}}]
\label{def:NeuralNetworks}
Let $d, L\in \N$. 
A \emph{neural network $\Phi$ with input dimension $d$ and $L$ layers} 
is a sequence of matrix-vector tuples (\emph{weight matrices} and \emph{bias vectors})
\[
    \Phi = \big((A_1,b_1),  (A_2,b_2),  \dots, (A_L, b_L)\big), 
\]
where $N_0 := d$ and $N_1, \dots, N_{L} \in \N$
are called \emph{layer dimensions},
and where $A_\ell \in \R^{N_\ell\times N_{\ell-1}}$ and $b_\ell \in \R^{N_\ell}$
for $\ell =1,...,L$.

For a NN $\Phi$ and an activation function $\varrho: \R \to \R$, 
we define the associated
\emph{realization of the NN $\Phi$} as 
$
 \realiz(\Phi): \R^d \to \R^{N_L} : x\mapsto x_L, 
$
where 
\begin{equation*}
    \begin{split}
        x_0 &:= x, \\
        x_{\ell} &:= \varrho(A_{\ell} \, x_{\ell-1} + b_\ell) \qquad \text{ for } \ell = 1, \dots, L-1,\\
        x_L &:= A_{L} \, x_{L-1} + b_{L}.
    \end{split}
\end{equation*}
Here $\varrho$ acts componentwise on vector-valued inputs, 
$\varrho(y) = (\varrho(y_1), \dots, \varrho(y_m))$ 
for all $y = (y_1, \dots, y_m) \in \R^m$.
We call 
$L(\Phi):= L$ the \emph{number of layers} or \emph{depth} of $\Phi$, 
$M_j(\Phi):= \| A_j\|_{\ell^0} + \| b_j \|_{\ell^0}$ 
the \emph{number of nonzero weights and biases in the $j$-th layer}, 
and
$M(\Phi) := \sum_{j=1}^L M_j(\Phi)$ 
the \emph{number of nonzero weights} or \emph{size} of $\Phi$. 
The number of nonzero weights and biases in the first layer 
is also denoted by $\sizefirst(\Phi)$, 
and the number of those in the last layer also by $\sizelast(\Phi)$.
The first $L-1$ layers,
in which the activation function is applied,
are called \emph{hidden layers}.
\end{definition}
We construct ReLU DNN emulations of 
Finite Element spaces \eqref{eq:hpFESpc} 
from certain fundamental building blocks using a 
\emph{calculus of ReLU NNs} from \cite{PV2018}.
 
The following two propositions introduce parallelizations in which, 
given two networks of equal depth,
one network is constructed which in parallel emulates
the realizations of the two given networks.
In the first proposition, both subnetworks have the same input.
In the second proposition, they have different inputs.

Both propositions contain bounds on the number of nonzero weights in the first and the last layer
that can be derived from the definitions.

\begin{proposition}[Parallelization $\mathrm P$, { \cite[Definition 2.7]{PV2018}, \cite[Proposition 2.3]{OPS2020} }]
\label{prop:parall}
Let $d, L \in \N$ and let $\Phi^1, \Phi^2$ be two NNs 
with $d$-dimensional input and depth $L$.

Then there exists a network 
$\mathrm{P}(\Phi^1, \Phi^2)$ with $d$-dimensional input and $L$ layers, 
called \emph{parallelization of $\Phi^1$ and $\Phi^2$}, 
satisfying
\begin{equation*}
\realiz\left(\mathrm{P}\left(\Phi^1,\Phi^2\right)\right) (x) 
= 
\left(\realiz\left(\Phi^1\right)(x), \realiz\left(\Phi^2\right)(x)\right), 
\qquad\text{ for all } x \in \R^d,
\end{equation*}
$M(\mathrm{P}(\Phi^1, \Phi^2)) = M(\Phi^1) + M(\Phi^2)$, 
$\sizefirst(\mathrm{P}(\Phi^1, \Phi^2)) = \sizefirst(\Phi^1) + \sizefirst(\Phi^2)$ 
and
$\sizelast(\mathrm{P}(\Phi^1, \Phi^2)) $ $= \sizelast(\Phi^1) + \sizelast(\Phi^2)$.
\end{proposition}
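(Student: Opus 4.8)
The plan is to construct $\Parallel{\Phi^1,\Phi^2}$ explicitly by concatenating the two networks layerwise, being careful that they share the same input. Write $\Phi^1 = ((A^1_1,b^1_1),\ldots,(A^1_L,b^1_L))$ and $\Phi^2 = ((A^2_1,b^2_1),\ldots,(A^2_L,b^2_L))$, with layer dimensions $N^1_0 = N^2_0 = d$ and $N^1_\ell, N^2_\ell$ for $\ell = 1,\ldots,L$. Since both subnetworks read the same input $x\in\R^d$, I would define the first layer by stacking the weight matrices vertically,
\[
A_1 = \begin{pmatrix} A^1_1 \\ A^2_1 \end{pmatrix}, \qquad b_1 = \begin{pmatrix} b^1_1 \\ b^2_1 \end{pmatrix},
\]
whereas for $\ell = 2,\ldots,L$ the two hidden states must evolve independently, so I would use block-diagonal weights and stacked biases,
\[
A_\ell = \begin{pmatrix} A^1_\ell & 0 \\ 0 & A^2_\ell \end{pmatrix}, \qquad b_\ell = \begin{pmatrix} b^1_\ell \\ b^2_\ell \end{pmatrix}.
\]
This defines a NN with input dimension $d$, depth $L$, and layer dimensions $N^1_\ell + N^2_\ell$.

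The realization identity would follow by induction on the layer index. Writing $x_\ell$, $x^1_\ell$, $x^2_\ell$ for the hidden states of $\Parallel{\Phi^1,\Phi^2}$, $\Phi^1$, $\Phi^2$ respectively, I claim $x_\ell = (x^1_\ell, x^2_\ell)$ for all $\ell\in\{0,\ldots,L\}$. The base case $\ell=0$ holds since all three equal $x$. For the inductive step I would check that the affine pre-activation at layer $\ell$ respects the concatenation: in the first layer the vertical stacking gives $A_1 x + b_1 = (A^1_1 x + b^1_1,\, A^2_1 x + b^2_1)$, while for $\ell\geq 2$ the block-diagonal form decouples the blocks, $A_\ell(x^1_{\ell-1},x^2_{\ell-1}) + b_\ell = (A^1_\ell x^1_{\ell-1} + b^1_\ell,\, A^2_\ell x^2_{\ell-1} + b^2_\ell)$. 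Since $\varrho$ acts componentwise in the hidden layers $\ell<L$ and the output layer $\ell=L$ applies no activation, in either case the result is $(x^1_\ell,x^2_\ell)$. At $\ell=L$ this yields $x_L = (\realiz(\Phi^1)(x), \realiz(\Phi^2)(x))$, the asserted identity.

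Finally I would count nonzero entries. Vertical stacking and block-diagonal embedding only append zero entries or relocate existing ones, so the $\ell^0$ count is additive: $\norm[\ell^0]{A_1} = \norm[\ell^0]{A^1_1} + \norm[\ell^0]{A^2_1}$, and the off-diagonal zero blocks in $A_\ell$ contribute nothing, whence $\norm[\ell^0]{A_\ell} = \norm[\ell^0]{A^1_\ell} + \norm[\ell^0]{A^2_\ell}$ for every $\ell$; the stacked biases behave identically. Thus $M_\ell(\Parallel{\Phi^1,\Phi^2}) = M_\ell(\Phi^1) + M_\ell(\Phi^2)$ for each $\ell$, and summing over $\ell$ gives $M(\Parallel{\Phi^1,\Phi^2}) = M(\Phi^1) + M(\Phi^2)$. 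Reading off $\ell=1$ and $\ell=L$ yields the stated identities for $\sizefirst$ and $\sizelast$.

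There is no genuine obstacle here; the construction is entirely mechanical. The only point requiring a moment's care is the asymmetry between the first layer (vertical stacking, because the input is shared) and the remaining layers (block-diagonal, because the hidden states must not interact). Getting this distinction right is exactly what makes the realization identity hold while keeping the weight count \emph{exactly} additive rather than merely bounded.
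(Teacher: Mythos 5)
Your construction is correct and coincides with the one the paper relies on via its citations ([PV2018, Definition 2.7], [OPS2020, Proposition 2.3]): vertical stacking in the first layer, block-diagonal weights thereafter, with the realization identity by induction and exact additivity of the $\ell^0$ counts. The paper itself gives no proof beyond noting the size bounds "can be derived from the definitions," and your argument supplies exactly that standard derivation.
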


We will also use parallelizations of NNs that do not have the same inputs. 
\begin{proposition}[Full parallelization ${\mathrm{FP}}$ {{ \cite[Setting 5.2]{EGJS2021}, \cite[Proposition 2.5]{OPS2020} }}] 
\label{prop:parallSep}
Let $L \in \N$ and let
$\Phi^1,\Phi^2$
be two NNs with the same depth $L$ 
and input dimensions $N^1_0=d_1$ and $N^2_0=d_2$, respectively. 

Then there exists a ReLU NN, denoted by $\mathrm{FP}(\Phi^1, \Phi^2)$, 
with $d = d_1+d_2$-dimensional input and depth $L$, 
called \emph{full parallelization of $\Phi^1$ and $\Phi^2$}, such that 
for all $x = (x_1,x_2) \in \R^d$ with $x_i \in \R^{d_i}, i = 1,2$ 
\begin{align*} 
\realiz\left(\mathrm{FP}\left(\Phi^1,\Phi^2\right)\right) (x_1,x_2) 
	= &\, \left(\realiz\left(\Phi^1\right)(x_1), \realiz\left(\Phi^2\right)(x_2)\right)
	,
\end{align*}
$M(\mathrm{FP}(\Phi^1, \Phi^2)) 
	= M(\Phi^1) + M(\Phi^2)$,
$\sizefirst(\mathrm{FP}(\Phi^1, \Phi^2)) 
	= \sizefirst(\Phi^1) + \sizefirst(\Phi^2)$
and
$\sizelast(\mathrm{FP}(\Phi^1, \Phi^2)) 
	= \sizelast(\Phi^1) + \sizelast(\Phi^2)$.
\end{proposition}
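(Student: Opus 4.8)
The plan is to construct $\mathrm{FP}(\Phi^1,\Phi^2)$ by placing the weight matrices and bias vectors of the two given networks into block-diagonal, respectively stacked, form, so that the two realizations propagate through the combined network without interacting. Writing $\Phi^1 = ((A^1_\ell, b^1_\ell))_{\ell=1}^L$ and $\Phi^2 = ((A^2_\ell, b^2_\ell))_{\ell=1}^L$, I would set $\mathrm{FP}(\Phi^1,\Phi^2) := ((A_\ell, b_\ell))_{\ell=1}^L$ with
\[
A_\ell := \begin{pmatrix} A^1_\ell & 0 \\ 0 & A^2_\ell \end{pmatrix}, \qquad b_\ell := \begin{pmatrix} b^1_\ell \\ b^2_\ell \end{pmatrix}, \qquad \ell = 1,\ldots, L.
\]
Since both networks have depth $L$, the layer indices match up and these block objects are well defined. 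The input dimension is $N^1_0 + N^2_0 = d_1 + d_2 = d$, and each layer dimension of the combined network is the sum $N^1_\ell + N^2_\ell$, so $\mathrm{FP}(\Phi^1,\Phi^2)$ is a legitimate NN of depth $L$ with $d$-dimensional input.

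For the realization identity, I would proceed by induction on the layer index. Let $x = (x_1, x_2)$ with $x_i \in \R^{d_i}$, and write $(x^1_\ell)_{\ell}$, $(x^2_\ell)_{\ell}$ for the intermediate states produced by $\Phi^1$ on input $x_1$ and by $\Phi^2$ on input $x_2$, respectively. Assuming the state of the combined network after layer $\ell-1$ equals the concatenation of $x^1_{\ell-1}$ and $x^2_{\ell-1}$, the block-diagonal structure of $A_\ell$ yields
\[
A_\ell \begin{pmatrix} x^1_{\ell-1} \\ x^2_{\ell-1} \end{pmatrix} + b_\ell = \begin{pmatrix} A^1_\ell x^1_{\ell-1} + b^1_\ell \\ A^2_\ell x^2_{\ell-1} + b^2_\ell \end{pmatrix},
\]
and, since $\varrho$ acts componentwise, applying it to both sides decouples the state into the pair $(x^1_\ell, x^2_\ell)$ for the hidden layers $\ell = 1,\ldots,L-1$; the final affine, non-activated layer is handled identically without the application of $\varrho$. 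Evaluating at $\ell = L$ gives $\realiz(\mathrm{FP}(\Phi^1,\Phi^2))(x_1, x_2) = (\realiz(\Phi^1)(x_1), \realiz(\Phi^2)(x_2))$, as claimed.

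For the size bounds, the essential observation is that the off-diagonal blocks of each $A_\ell$ are identically zero, so the block assembly creates no new nonzero entries. Hence $\norm[\ell^0]{A_\ell} = \norm[\ell^0]{A^1_\ell} + \norm[\ell^0]{A^2_\ell}$ and $\norm[\ell^0]{b_\ell} = \norm[\ell^0]{b^1_\ell} + \norm[\ell^0]{b^2_\ell}$, giving $M_\ell(\mathrm{FP}(\Phi^1,\Phi^2)) = M_\ell(\Phi^1) + M_\ell(\Phi^2)$ for every $\ell$. Summing over $\ell$ yields $M(\mathrm{FP}(\Phi^1,\Phi^2)) = M(\Phi^1) + M(\Phi^2)$, and specializing to $\ell = 1$ and $\ell = L$ gives the stated identities for $\sizefirst$ and $\sizelast$.

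I do not anticipate a genuine obstacle here: the construction is purely structural, and the only hypothesis doing real work is the equal depth of $\Phi^1$ and $\Phi^2$, which is what allows the layer-by-layer block assembly to go through without any padding. The single point where a careless argument could slip is the induction step — keeping the two coordinate blocks separate under the componentwise action of $\varrho$ — so I would state the decoupling explicitly rather than treat it as self-evident.
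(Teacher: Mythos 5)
Your construction is correct and is precisely the standard block-diagonal assembly used in the cited references (\cite[Setting 5.2]{EGJS2021}, \cite[Proposition 2.5]{OPS2020}), which the paper relies on without reproving; the layer-wise induction and the nonzero-entry count both go through exactly as you describe. No issues.
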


We next recall the concatenation of two NNs.
The matrix multiplications in the definition below
may lead to an undesirably large network size,
if the product of two small or sparse matrices is a large dense matrix
(this applies to $U^1V^{L_2}$ in the definition below).
For this reason, so-called \emph{sparse concatenations} 
will be defined in Proposition \ref{prop:conc}.

\begin{definition}[Concatenation, {{\cite[Definition 2.2]{PV2018}}}]
\label{def:pvconc}
Let $\Phi^1,\Phi^2$ be two NNs 
such that the input dimension of $\Phi^1$ equals the output dimension of $\Phi^2$,
which we denote by $k$.
For $\ell = 1, \ldots, L_1 := \depth(\Phi^1)$ 
we denote the weight matrices and bias vectors of $\Phi^1$ by 
$\{U^{\ell}\}_{\ell}$ and $\{a^{\ell}\}_{\ell}$ 
and for $\ell = 1, \ldots, L_2 := \depth(\Phi^2)$
we denote those of $\Phi^2$ by
$\{V^{\ell}\}_{\ell}$ and $\{b^{\ell}\}_{\ell}$.
Then, we define 
the \emph{concatenation} of $\Phi^1$ and $\Phi^2$ as
\begin{align*}
\Phi^1 \bullet \Phi^2 
	:= &\, \big(
	(V^1,b^1),\ldots,(V^{L_2-1},b^{L_2-1}),
	\\
	&\, (U^1V^{L_2},U^1b^{L_2}+a^1),
	(U^2,a^2),\ldots,(U^{L_1},a^{L_1})
	\big)
. 
\end{align*}
\end{definition}
It follows immediately from this definition that
$\realiz( \Phi^1 \bullet \Phi^2 ) = \realiz( \Phi^1 ) \circ \realiz( \Phi^2 )$ and that
\begin{align}
\label{eq:pvconcdepth}
\depth(\Phi^1 \bullet \Phi^2) = \depth(\Phi^1) + \depth(\Phi^2) - 1
.
\end{align}

\begin{proposition}[{{ Sparse concatenation, \cite[Remark 2.6]{PV2018}, \cite[Proposition 2.2]{OPS2020} }}]
\label{prop:conc}
Let $L_1, L_2 \in \N$, and let 
$\Phi^1$, $\Phi^2$ 
be two ReLU NNs of respective depths $L_1$ and $L_2$ such that $N^1_0 = N^2_{L_2}=: d$, i.e.,
the input layer of $\Phi^1$ has the same dimension as the output layer of $\Phi^2$. 

Then, there exists a ReLU NN $\Phi^1 \sconc \Phi^2$, called 
the \emph{sparse concatenation of $\Phi^1$ and $\Phi^2$}, 
such that $\Phi^1 \sconc \Phi^2$ has depth $L_1+L_2$,
$\realiz(\Phi^1 \sconc \Phi^2) = \realiz(\Phi^1) \circ \realiz(\Phi^2)$, 
and
\begin{gather}
\sizefirst(\Phi^1 \sconc \Phi^2) 
	\leq \begin{cases}
		2 \sizefirst(\Phi^2) & \text{ if }
		L_2 = 1,
		\\
		\sizefirst(\Phi^2) & \text{ else,}
	\end{cases}
	\qquad
\sizelast(\Phi^1 \sconc \Phi^2) 
	\leq \begin{cases}
		2 \sizelast(\Phi^1) & \text{ if }
		L_1 = 1,
		\\
		\sizelast(\Phi^1) & \text{ else,}
	\end{cases}
\nonumber\\
\label{eq:concsize}
M\left(\Phi^1 \sconc \Phi^2\right) \leq M\left(\Phi^1\right) + 
     \sizefirst \left(\Phi^1\right) + \sizelast \left(\Phi^2\right) + M\left(\Phi^2\right) 
  \leq  2M\left(\Phi^1\right)  + 2M\left(\Phi^2\right) .
\end{gather}
\end{proposition}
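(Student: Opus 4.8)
The plan is to realize the sparse concatenation by inserting a short \emph{identity network} between $\Phi^1$ and $\Phi^2$, exploiting the elementary ReLU identity $t = \varrho(t) - \varrho(-t)$ valid for every $t\in\R$. Concretely, I would introduce the depth-$2$ network $\Phi^{\Id}_d$ with weight matrices $W^1 = \begin{pmatrix} I_d \\ -I_d \end{pmatrix} \in \R^{2d\times d}$ and $W^2 = \begin{pmatrix} I_d & -I_d \end{pmatrix} \in \R^{d\times 2d}$ and vanishing biases; applying the identity componentwise gives $\realiz(\Phi^{\Id}_d) = \Id$ on $\R^d$. I would then \emph{define} $\Phi^1 \sconc \Phi^2 := \Phi^1 \bullet (\Phi^{\Id}_d \bullet \Phi^2)$, where $\bullet$ is the ordinary concatenation of Definition \ref{def:pvconc}. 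The dimensional hypotheses match, since $\Phi^{\Id}_d$ has input and output dimension $d = N^1_0 = N^2_{L_2}$.

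The realization and depth claims are then immediate. Because $\realiz(\Phi^{\Id}_d) = \Id$, the realization rule for $\bullet$ gives $\realiz(\Phi^1 \sconc \Phi^2) = \realiz(\Phi^1) \circ \Id \circ \realiz(\Phi^2) = \realiz(\Phi^1)\circ\realiz(\Phi^2)$. For the depth I would apply \eqref{eq:pvconcdepth} twice: the inner concatenation has depth $2 + L_2 - 1 = L_2 + 1$, and hence $\depth(\Phi^1 \sconc \Phi^2) = L_1 + (L_2+1) - 1 = L_1 + L_2$, as claimed.

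The substance of the argument is the sparsity bookkeeping, and the identity layer is precisely what averts the dense-matrix blow-up of the naive concatenation. Carrying out the two $\bullet$-operations explicitly (write $U^\ell,a^\ell$ for the weights/biases of $\Phi^1$ and $V^\ell,b^\ell$ for those of $\Phi^2$), the only matrix products that ever arise are $W^1 V^{L_2} = \begin{pmatrix} V^{L_2} \\ -V^{L_2} \end{pmatrix}$ and $U^1 W^2 = \begin{pmatrix} U^1 & -U^1 \end{pmatrix}$, so multiplying against the blocks $\pm I_d$ merely duplicates, rather than fills in, the nonzero pattern. Thus the layer merged from $\Phi^2$'s last layer contributes $2\sizelast(\Phi^2)$ nonzeros, the layer merged from $\Phi^1$'s first layer contributes $2\|U^1\|_{\ell^0} + \|a^1\|_{\ell^0} \le 2\sizefirst(\Phi^1)$, and every remaining layer is copied verbatim from $\Phi^1$ or $\Phi^2$. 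Summing over all layers and regrouping yields
\[
M(\Phi^1\sconc\Phi^2) \le M(\Phi^1) + \sizefirst(\Phi^1) + \sizelast(\Phi^2) + M(\Phi^2),
\]
and the coarser bound $\le 2M(\Phi^1) + 2M(\Phi^2)$ follows from $\sizefirst(\Phi^1)\le M(\Phi^1)$ and $\sizelast(\Phi^2)\le M(\Phi^2)$.

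The most error-prone part, and where I expect the main obstacle, is the first/last-layer counts in the degenerate cases $L_1 = 1$ and $L_2 = 1$. When $L_2 > 1$ the first layer of $\Phi^1\sconc\Phi^2$ is just $(V^1,b^1)$, so $\sizefirst(\Phi^1\sconc\Phi^2) = \sizefirst(\Phi^2)$; but when $L_2 = 1$ the first layer coincides with the merged layer $W^1 V^1$ and the count doubles to $2\sizefirst(\Phi^2)$. Symmetrically, the last-layer count equals $\sizelast(\Phi^1)$ when $L_1 > 1$ and is at most $2\sizelast(\Phi^1)$ when $L_1 = 1$, matching the two case distinctions in the statement. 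I would confirm each of these endpoint configurations (including the simultaneous $L_1=L_2=1$ case) directly from the explicit layer list produced by the two $\bullet$-operations.
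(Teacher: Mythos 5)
Your construction is correct and is exactly the one behind the cited result (\cite[Remark 2.6]{PV2018}, \cite[Proposition 2.2]{OPS2020}, for which the present paper gives no independent proof): insert the two-layer ReLU identity block based on $t=\varrho(t)-\varrho(-t)$ and apply the plain concatenation $\bullet$ twice, so that the only merged matrices are $\pm$-duplications of $V^{L_2}$ and $U^1$. Your bookkeeping of the merged layers, including the degenerate cases $L_1=1$ and $L_2=1$, matches the stated bounds, so there is nothing to add.
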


The following proposition summarizes the properties of
one type of identity NNs based on the ReLU activation function.

\begin{proposition}[{{ \cite[Remark 2.4]{PV2018}, \cite[Proposition 2.4]{OPS2020} }}]
\label{prop:Id}
For every $d,L\in \N$ there exists a ReLU NN 
$\Phi^{\mathrm{Id}}_{d,L}$ with $L(\Phi^{\mathrm{Id}}_{d,L}) = L$, 
$M(\Phi^{\mathrm{Id}}_{d,L}) \leq 2 d L$, 
$\sizefirst(\Phi^{\mathrm{Id}}_{d,L})\leq 2d$ and 
$\sizelast(\Phi^{\mathrm{Id}}_{d,L})\leq 2d$ 
such that 
$\realiz (\Phi^{\mathrm{Id}}_{d,L}) = \mathrm{Id}_{\R^d}$.
\end{proposition}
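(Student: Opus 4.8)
The plan is to write down the network explicitly, relying on two elementary facts about the ReLU activation $\varrho$: the decomposition $t = \varrho(t) - \varrho(-t)$, valid for every $t\in\R$, and the idempotence $\varrho(\varrho(t)) = \varrho(t)$, which holds because $\varrho(t)\geq 0$. Splitting each input coordinate into its positive and negative parts in the first layer will let me transport the (now nonnegative) data through any number of ReLU-activated hidden layers without distortion, and then reassemble the original coordinates in the output layer, where no activation is applied.

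First I would dispose of the degenerate case $L=1$ separately. Here the single layer is the (affine, non-activated) output layer, so taking $A_1 = \mathrm{Id}_{\R^d}$ and $b_1=0$ already gives $\realiz(\Phi^{\mathrm{Id}}_{d,1}) = \mathrm{Id}_{\R^d}$, with $M = \sizefirst = \sizelast = d \leq 2d$, so that all asserted bounds hold.

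For $L\geq 2$ I would construct $\Phi^{\mathrm{Id}}_{d,L}$ layer by layer. The first layer uses $b_1=0$ and a matrix $A_1\in\R^{2d\times d}$ that sends each input coordinate $x_i$ to the pair $(x_i,-x_i)$; thus $A_1$ has exactly one $+1$ and one $-1$ per column, hence $2d$ nonzero entries in total, and after the activation it outputs the $2d$ nonnegative numbers $(\varrho(x_i),\varrho(-x_i))_{i=1}^d$. Each of the $L-2$ intermediate hidden layers is taken to be $A_\ell = \mathrm{Id}_{\R^{2d}}$, $b_\ell = 0$; since the incoming values are nonnegative, $\varrho$ acts as the identity on them, so they pass through unchanged at a cost of $2d$ nonzero weights per layer. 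The output layer uses $b_L=0$ and $A_L\in\R^{d\times 2d}$ forming the differences $\varrho(x_i)-\varrho(-x_i)=x_i$, again with $2d$ nonzero entries, and since the last layer carries no activation this recovers $\realiz(\Phi^{\mathrm{Id}}_{d,L})(x)=x$ exactly.

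It then remains to tally the weights. The first and the last layer each contribute $2d$ nonzero entries, giving $\sizefirst\leq 2d$ and $\sizelast\leq 2d$, while each of the $L-2$ hidden layers contributes $2d$, so that $M = 2d + (L-2)\cdot 2d + 2d = 2dL$. I do not expect any genuine obstacle here: the only points demanding care are the bookkeeping of layer dimensions and nonzero counts, and the observation that the idempotence of $\varrho$ renders the intermediate layers transparent, which is precisely why the first layer splits the input into its nonnegative positive and negative parts.
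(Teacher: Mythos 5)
Your construction is correct and is precisely the standard one from the cited references (\cite[Remark 2.4]{PV2018}, \cite[Proposition 2.4]{OPS2020}), which the paper invokes without reproving: split each coordinate via $x_i=\varrho(x_i)-\varrho(-x_i)$ in the first layer, exploit nonnegativity to pass through the hidden layers, and recombine at the output. The bookkeeping ($M=2dL$ for $L\geq 2$, $M=d$ for $L=1$, and $\sizefirst,\sizelast\leq 2d$) checks out.
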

%%%%%%%%%%%%%%%%%%%%%%%%%%%%%%%%%%%%%%%%%%%%%%%%%%%%%%%%%%%%%%%%%%%%%
\subsection{ReLU emulation of polynomials}
\label{sec:Polynomials}
%%%%%%%%%%%%%%%%%%%%%%%%%%%%%%%%%%%%%%%%%%%%%%%%%%%%%%%%%%%%%%%%%%%%%
In this section, we construct ReLU NNs 
whose realizations emulate univariate polynomials
of arbitrary degrees on a bounded interval.
We closely follow the approach in \cite[Section 4]{OPS2020},
again exhibiting explicitly the dependence of the DNN depth and size
on the polynomial degree.

Differently from the construction in \cite{OPS2020}, 
for $n\in\N$ we use \Cheb polynomials of the first kind, 
denoted by $\{T_\ell\}_{\ell\leq n}$,
as basis for the space $\bbP_n$ of polynomials of degree $n$.
As was derived for NNs with the RePU activation function 
$\varrho^r$ for $r\in\N$ satisfying $r\geq2$ in \cite{TLYpre2019}, 
and as we will see 
for ReLU NNs in Lemma \ref{lem:relupolynominduction} below, 
they can be approximated efficiently by 
exploiting the \emph{multiplicative three term recursion}
(which is distinct from the additive three-term recurrence relation 
 and not available for other families of orthogonal polynomials on $[-1,1]$)
\begin{align}
\label{eq:chebrecur}
T_{m+n} = 2T_mT_n-T_{|m-n|},
\quad T_0(x) = 1, \quad T_1(x) = x, 
\qquad 
\text{ for all } m,n\in\N_0 \text{ and } x\in\R,
\end{align}
which is related to the addition rule for cosines.\footnote{
For all $m,n\in\Z$ and $\theta\in\R$ it holds that
$\cos((m+n)\theta)
	= \cos(m \theta) \cos (n \theta) - \sin(m \theta) \sin (n \theta)$,
from which we obtain
$\cos((m+n)\theta) + \cos((m-n)\theta)
	= 2 \cos(m \theta) \cos (n \theta)$.
For all $x \in [-1,1]$, 
\eqref{eq:chebrecur} now follows from
$\cos(k \theta) = T_{\snorm{k}}(x)$
for all $k \in \Z$ and $\theta = \arccos x$.}
The advantage of using a \Cheb basis instead of a monomial basis
is that \Cheb approximation is better conditioned.
For example, for functions whose derivative is of bounded variation,
the sum of the absolute values of its \Cheb coefficients 
is bounded in terms of that variation (\cite[Theorem 7.2]{Trefethen2020}).\footnote{\label{fn:chebcoeff}
In \cite[Theorem 7.2]{Trefethen2020}, 
it is shown that the \Cheb coefficients $(a_j)_{j\in\N}$ 
satisfy $\snorm{a_j} \leq C V j^{-2}$, 
where $V$ denotes the variation of the derivative.
A similar estimate is used in the proof of Lemma \ref{lem:chebclinftyoned}.
}
This includes functions in $W^{2,1}((-1,1))$,
for which this variation of its derivative is bounded by the $W^{2,1}((-1,1))$-norm,
and realizations of ReLU NNs.
The sum of the absolute values of the \Cheb coefficients
is often much smaller than the sum of the absolute values of the coefficients
with respect to the monomial basis.
The use of the better conditioned \Cheb basis
allows us to obtain bounds on the network depth and size 
with a better dependence on the polynomial degree
than that in \cite{OPS2020},
because the analysis there used monomial expansions of Legendre polynomials,
the coefficients of which in absolute value grow exponentially with the polynomial degree,
cf. \cite[Equation (4.13)]{OPS2020} and the analysis preceding that equation.

The base result of this section is Proposition \ref{prop:relupolynom}
on the emulation of polynomials on the reference interval $\hI := [-1,1]$,
which is exact in the endpoints of the interval.
For the ReLU DNN approximation of univariate piecewise polynomial functions in Section \ref{sec:relupwpolynom},
we will use NN approximations of polynomials on an arbitrary bounded interval, 
such that the NN realizations are continuous and constant outside the interval of interest.
Such approximations are presented in Corollary \ref{cor:relupolynomH10}.
In both those results, 
the hidden layer weights and biases
are independent of the approximated polynomials.
The output layer weights and biases are the \Cheb coefficients
of the polynomial of interest.
They can be computed easily from the values of the polynomial in the \CC points
using the inverse fast Fourier transform, 
by Proposition \ref{prop:cconed} Item \eqref{item:ccifftoned}.

The main building block for our approximations of piecewise polynomials by ReLU NNs
is the efficient approximation of products introduced in \cite{Yarotsky2017}. 
We here recall a version with $W^{1,\infty}$-error bounds from \cite{SZ2019}
in the notation of \cite{OPS2020}
and in addition recall in Equation \eqref{eq:multone} below from \cite{JOdiss}
that multiplication by $\pm1$ is emulated exactly.

\begin{proposition}[{{ \cite[Propositions 2 and 3]{Yarotsky2017}, \cite[Proposition 3.1]{SZ2019}, \cite[Proposition 6.2.1]{JOdiss} }}]
\label{prop:multiplicationThing}
There exist $C_L, C'_L,$ $C_M, C'_M, C_{\first}, C_{\last} >0$ 
such that
for all $\kappa>0$ and $\delta \in (0,1/2)$, 
there exists a ReLU NN $\widetilde{\times}_{\delta,\kappa}$ 
with input dimension $2$ and output dimension $1$, 
which satisfies
\begin{align*}
& \sup_{|a|, |b| \leq \kappa}\left|ab - \realiz\left(\widetilde{\times}_{\delta, \kappa}\right)(a,b)\right| 
  \leq \delta \text{ and } \nonumber 
\\
&\esssup_{|a|,  |b| \leq \kappa}
\max\left\{
\left|a - {\tfrac{\partial}{\partial b}} \realiz\left(\widetilde{\times}_{\delta, \kappa}\right)(a,b)\right|, 
\left|b - {\tfrac{\partial}{\partial a}} \realiz\left(\widetilde{\times}_{\delta, \kappa}\right)(a,b)\right|
\right\} 
\leq \delta.
\end{align*}
For all $\kappa>0$ and $\delta \in (0, 1/2)$
\begin{align*}
\depth\left(\widetilde{\times}_{\delta, \kappa}\right) 
	\leq &\, C_L \left(\log_2\left( \tfrac{\max\{{\kappa},1\}}{\delta}\right)\right) + C'_L
	,
	\\
\size\left(\widetilde{\times}_{\delta, \kappa}\right) 
	\leq &\, C_M \left(\log_2\left( \tfrac{\max\{{\kappa},1\}}{\delta}\right)\right) + C'_M
	,
	\\
\sizefirst\left(\widetilde{\times}_{\delta, \kappa}\right) 
	\leq &\, C_{\first}
	,
	\qquad
\sizelast\left(\widetilde{\times}_{\delta, \kappa}\right)
	\leq C_{\last}
.
\end{align*}
In addition, for every $a \in \R$
\begin{align}\label{eq:ZeroInZeroOutBasic}
\realiz\left(\widetilde{\times}_{\delta, \kappa}\right)(a,0) 
=
\realiz\left(\widetilde{\times}_{\delta, \kappa}\right)(0,a) = 0,
\end{align}
and for every $a\in\{-1,1\}$ and every $b \in [-\kappa,\kappa]$
\begin{align}\label{eq:multone}
\realiz\left(\widetilde{\times}_{\delta, \kappa}\right)(a,b) 
=
\realiz\left(\widetilde{\times}_{\delta, \kappa}\right)(b,a) = ab.
\end{align}
\end{proposition}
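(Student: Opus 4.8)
The plan is to reduce the product to a single squaring map by polarization and to realise squaring through Yarotsky's self-similar ReLU construction. First I would fix $m\in\N$ and form the tooth map $g(x)=2\varrho(x)-4\varrho(x-\tfrac12)+2\varrho(x-1)$, whose $s$-fold self-composition $g^{\circ s}$ is a sawtooth with $2^{s-1}$ teeth, and set $f_m(x):=x-\sum_{s=1}^m 4^{-s} g^{\circ s}(x)$. This $f_m$ is exactly the continuous piecewise linear interpolant of $x\mapsto x^2$ at the dyadic nodes $k2^{-m}$ on $[0,1]$, so that $\norm[L^\infty((0,1))]{f_m - x^2}\le 2^{-2m-2}$. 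Since on each dyadic cell the slope of $f_m$ equals the secant slope of $x^2$, which coincides with $2x$ at the cell midpoint, one also obtains the almost-everywhere bound $\snorm{f_m'(x)-2x}\le 2^{-m}$ (the $W^{1,\infty}$-refinement). Using the exact identity $\snorm{x}=\varrho(x)+\varrho(-x)$ to pass to an even function and rescaling, I would build a squaring surrogate $s_{m,\kappa}(t):=\kappa^2 f_m(\snorm{t}/\kappa)$ of $t\mapsto t^2$ on $[-\kappa,\kappa]$ with $L^\infty$-error at most $\kappa^2 2^{-2m-2}$ and a.e. derivative error at most $\kappa 2^{-m}$.

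Next I would define $\realiz(\widetilde\times_{\delta,\kappa})(a,b) := s_{m,\kappa}(\tfrac{a+b}{2}) - s_{m,\kappa}(\tfrac{a-b}{2})$, which reproduces $ab$ exactly when $s_{m,\kappa}$ is replaced by the true square. The two affine maps $(a,b)\mapsto\tfrac{a\pm b}{2}$ are placed in the first layer, the two copies of the squaring network are combined by the parallelization of Proposition~\ref{prop:parall} (using the sparse concatenation of Proposition~\ref{prop:conc} to stack the affine maps before the squaring nets and the identity networks of Proposition~\ref{prop:Id} to equalise depths), and the difference is taken in the output layer. Because $\snorm{\tfrac{a\pm b}{2}}\le\kappa$ for $\snorm{a},\snorm{b}\le\kappa$, the value error is bounded by the sum of the two squaring errors, hence by $\kappa^2 2^{-2m-1}$, while differentiation gives $\snorm{b - \tfrac{\partial}{\partial a}\realiz(\widetilde\times_{\delta,\kappa})(a,b)}\le \kappa 2^{-m}$ and likewise $\snorm{a - \tfrac{\partial}{\partial b}\realiz(\widetilde\times_{\delta,\kappa})(a,b)}\le\kappa 2^{-m}$. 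The derivative bound being the binding one, I would choose $m=\ceil{\log_2(\max\{\kappa,1\}/\delta)}$, which forces both errors to be at most $\delta$; by \eqref{eq:pvconcdepth} and \eqref{eq:concsize} the depth and size are then $O(m)=O(\log_2(\max\{\kappa,1\}/\delta))$, and $\sizefirst$ and $\sizelast$ stay bounded by absolute constants since only the two fixed input maps and the scalar output combination meet the outermost layers.

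It remains to verify the exactness identities directly from this closed form. Property \eqref{eq:ZeroInZeroOutBasic} is immediate: for $b=0$ the two squared arguments $\tfrac{a\pm b}{2}$ coincide and cancel, and for $a=0$ they equal $\pm\tfrac{b}{2}$ and cancel because $s_{m,\kappa}$ is even. For \eqref{eq:multone} I would first note that $\snorm{a}=1\le\kappa$ and that evenness of $s_{m,\kappa}$ reduces the case $a=-1$ to the case $a=1$ with a sign, so it suffices to show $s_{m,\kappa}(\tfrac{1+b}{2})-s_{m,\kappa}(\tfrac{1-b}{2})=b$ for all $b\in[-\kappa,\kappa]$. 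The two arguments are reflections of one another about $\tfrac12$, and $s_{m,\kappa}$ is piecewise linear and exact (equal to $t^2$) at its nodes; hence, provided the node set is symmetric about $\tfrac12$, the map $b\mapsto s_{m,\kappa}(\tfrac{1+b}{2})-s_{m,\kappa}(\tfrac{1-b}{2})$ is a piecewise linear function that agrees with the affine map $b\mapsto b$ at every one of its own breakpoints, and is therefore equal to it, which is precisely the claim.

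The main obstacle is exactly this last symmetry requirement: the dyadic node set underlying $s_{m,\kappa}$ is symmetric about $0$ but is symmetric about $\tfrac12$ only when $1$ happens to be one of its nodes, which for the plain rescaling $s_{m,\kappa}(t)=\kappa^2 f_m(\snorm{t}/\kappa)$ holds only for dyadic $\kappa$. Reconciling the node alignment needed for \eqref{eq:multone} with an \emph{arbitrary} $\kappa$ while keeping the size logarithmic in $\max\{\kappa,1\}/\delta$ is the delicate point; I would resolve it by choosing the squaring surrogate's breakpoints to include the reflection centres $\pm\tfrac12$ (equivalently, aligning the mesh so that $\pm 1$ and $0$ are nodes) before rescaling, as in the construction of \cite{JOdiss}, and then checking that the error and complexity estimates of the second paragraph are unaffected by this alignment.
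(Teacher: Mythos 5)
Your construction is exactly the one the paper relies on: the paper gives no self-contained proof but defers to \cite[Proposition 6.2.1]{JOdiss}, which in turn is the Yarotsky polarization-plus-sawtooth argument with the $W^{1,\infty}$ refinement of \cite{SZ2019}; your first two paragraphs reproduce that argument correctly, including the value bound $\kappa^2 2^{-2m-1}$, the derivative bound $\kappa 2^{-m}$, and the choice $m=\ceil{\log_2(\max\{\kappa,1\}/\delta)}$, and your verification of \eqref{eq:ZeroInZeroOutBasic} from evenness of $s_{m,\kappa}$ is complete.

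The one point that is not actually closed is the one you flag yourself: property \eqref{eq:multone}. Two issues remain. First, for $\kappa<1$ and $a=\pm1$ the arguments $\tfrac{a\pm b}{2}$ can have absolute value up to $\tfrac{1+\kappa}{2}>\kappa$, so with $s_{m,\kappa}(t)=\kappa^2 f_m(\snorm{t}/\kappa)$ they leave the region where $s_{m,\kappa}$ approximates $t\mapsto t^2$ (outside $[-\kappa,\kappa]$ your $f_m$ continues as the identity, not as the square); you must build the squaring surrogate on $[-K,K]$ with $K\geq\max\{\kappa,1\}$. Second, ``including $\pm\tfrac12$ among the breakpoints'' is not by itself enough for the reflection argument: you need every node $t_j$ of $s_{m,\kappa}$ in the range of $\tfrac{1\pm b}{2}$ to have $1-t_j$ also a node with the same kink magnitude, which for a uniform mesh of width $\eta$ on $[0,K]$ amounts to requiring that $\eta$ divide $1$. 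Both issues are resolved simultaneously by taking $K$ to be the smallest power of two with $K\geq\max\{\kappa,1\}$ and $\eta=K2^{-m'}$ with $m'\geq\log_2(K/\delta)$, so that $\eta$ is a negative power of two (hence divides $1$) and $0$, $1$, $K$ are all nodes; the error and complexity estimates change only by absolute constants since $K\leq 2\max\{\kappa,1\}$. With that substitution your kink-cancellation argument for $b\mapsto s(\tfrac{1+b}{2})-s(\tfrac{1-b}{2})$ does go through, and the proof is complete; as written, however, \eqref{eq:multone} is only established modulo this alignment, which is precisely the extra content that \cite[Proposition 6.2.1]{JOdiss} adds to the classical references.
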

We refer to \cite[Proposition 6.2.1]{JOdiss} for details of the proof,
which is based on results proved in \cite{Yarotsky2017,SZ2019}.
The networks from 
\cite[Propositions 2 and 3]{Yarotsky2017} and \cite[Proposition 3.1]{SZ2019}
have a fixed width and variable depth.
There exist networks with the same realization,
whose width is variable independently of their depth,
which are studied in \cite{PV2018} and a series of works following \cite{SYZ2020}.

We now prove an analogue of \cite[Proposition 4.2]{OPS2020}
on the NN approximation of polynomials on the reference interval $\hI = [-1,1]$,
based on the better conditioned \Cheb expansion of the target polynomial.
It also covers the straightforward extension to the approximation of $N\in\N$ polynomials
by computing them as linear combinations of approximate \Cheb polynomials,
which are emulated only once.
Furthermore, 
the only NN coefficients which depend on the approximated functions
are the \Cheb coefficients, which can be computed efficiently
from point values in the \CC nodes,
as noted in Proposition \ref{prop:cconed} Item \eqref{item:ccifftoned}.

The next result has ReLU DNN expression rate bounds in Lipschitz norm
for polynomials of degree $n$ given in terms of \Cheb expansions.
It has been proved in \cite[Proposition 6.2.2]{JOdiss}, and 
was already used in  \cite[Lemma 3.1]{HOS2022}.
It generalizes \cite[Lemma 3.1]{HOS2022},
which corresponds to the choices of
$N_{\bsv} = n$, ${\tau} = \delta$ and $v_i = T_i$, $i=1,\ldots,n$
in Proposition \ref{prop:relupolynom}.
Those choices imply $C_{\mathrm s} \leq 1$ and $C_{\mathrm n} = n$.

\begin{proposition}[ReLU-Emulation of \Cheb expansions {{\cite[Proposition 6.2.2]{JOdiss}}}]
\label{prop:relupolynom}
There exists a constant $C>0$ such that for all $n,{N_{\bsv}}\in\N$ 
and ${N_{\bsv}}$ polynomials $v_i = \sum_{\ell=0}^n v_{i,\ell} T_{\ell} \in \bbP_n$ for $i=1,\ldots,{N_{\bsv}}$,
the following holds:

With $C_{\mathrm s}:= \max_{i=1}^{N_{\bsv}}\sum_{\ell=2}^{n} |v_{i,\ell}|$ 
bounding the sum of the sizes of coefficients for each polynomial, ignoring the first two coefficients,
and 
$C_{\mathrm n}
	:= |\{ v_{i,\ell} \neq 0: \ell = 0,\ldots,n \text{ and } i=1,\ldots,{N_{\bsv}}\}| 
	\leq (n+1){N_{\bsv}}$ 
bounding the total number of nonzero coefficients of $(v_i)_{i=1}^{N_{\bsv}}$,
there exist ReLU NNs $\{\Phi^{\bsv}_{\tau}\}_{{\tau}\in(0,1)}$ 
with input dimension one and output dimension ${N_{\bsv}}$ 
which satisfy for emulation tolerance $\tau \in (0,1)$
\begin{align*}
\normc[W^{1,\infty}(\hI)]{v_i-\realiz(\Phi^{\bsv}_{\tau})_i}
	\leq &\, {\tau} C_{\mathrm s},
	\qquad \forall i=1,\ldots,{N_{\bsv}},
	\\
\depth(\Phi^{\bsv}_{\tau})
	\leq &\, C_{\depth}(1+\log_2(n))\log_2({1/{\tau}})
		+ \tfrac{2}{3} C_{\depth} (\log_2(n))^3 + C (1+\log_2(n))^2,
	\\
\size(\Phi^{\bsv}_{\tau})
	\leq &\, 4C_{\size} n \log_2({1/{\tau}}) + 16C_{\size} n\log_2(n)
	\\
	&\, + 6C_{\depth} (1+\log_2(n))^2 \log_2({1/{\tau}}) + C (n+C_{\mathrm n}),
	\\
\sizefirst(\Phi^{\bsv}_{\tau})
	\leq &\, \begin{cases} C_{\mathrm n} &\text{ if } n=1,\\ 
		 4(\log_2(n)+1) &\text{ else,} \end{cases}
	\\
\sizelast(\Phi^{\bsv}_{\tau}) \leq &\, 2C_{\mathrm n}.
\end{align*}
In addition, 
\[
\realiz(\Phi^{\bsv}_{\tau})_i(\pm1) = v_i(\pm1)\;, \quad i=1,\ldots,{N_{\bsv}} \;.
\]
The weights and biases in the hidden layers are independent of $(v_i)_{i=1}^{N_{\bsv}}$.
The weights and biases in the output layer are the \Cheb coefficients 
$\{v_{i,\ell} : \ell=0,\ldots,n \text{ and } i=1,\ldots,N_{\bsv}\}$,
which are linear combinations of function values in the \CC points,
as shown in Proposition \ref{prop:cconed} Item \eqref{item:ccifftoned}.
\end{proposition}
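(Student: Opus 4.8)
The plan is to realize each $v_i$ as the linear combination $\sum_{\ell=0}^n v_{i,\ell}\widetilde T_\ell$ of ReLU emulations $\widetilde T_\ell$ of the Chebyshev polynomials $T_\ell$, built \emph{once} in the hidden layers from the single input $x$ and then combined in the output layer through the coefficient matrix $(v_{i,\ell})$. This layout explains at a glance why the hidden weights are independent of $(v_i)_i$, why the output weights are exactly the Chebyshev coefficients, and why only $\sizelast$ and the additive $C(n+C_{\mathrm n})$ part of $\size$ scale with $N_{\bsv}$. First I would build $(\widetilde T_\ell)_{\ell=1}^n$ by a doubling scheme driven by the multiplicative recursion \eqref{eq:chebrecur}. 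Setting $\widetilde T_0:=1$ (a bias) and $\widetilde T_1:=T_1=x$ (exact), I group the degrees into levels $k=1,\dots,L_0:=\lceil\log_2 n\rceil$, level $k$ collecting those $T_j$ with $2^{k-1}<j\le 2^k$. For such $j$, writing $j=2^{k-1}+r$ with $0<r\le 2^{k-1}$, the identity $T_j=2T_{2^{k-1}}T_r-T_{2^{k-1}-r}$ produces $T_j$ from a \emph{single} product of two Chebyshev polynomials of level $\le k-1$, so level $k$ is emulated by placing $\mathcal{O}(2^k)$ copies of the product network $\widetilde{\times}_{\delta_k,\kappa}$ of Proposition \ref{prop:multiplicationThing} in parallel (Proposition \ref{prop:parall}), carrying the still-needed lower-order $\widetilde T$'s forward through identity channels (Proposition \ref{prop:Id}); the factor $2$ and the subtraction of $T_{2^{k-1}-r}$ are absorbed into the affine part of the next layer. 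Sparse concatenation of the $L_0$ levels (Proposition \ref{prop:conc}) yields a single network with $\mathcal{O}(n)$ product blocks in total.

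Endpoint exactness and the error bound then follow by induction on $k$. Since $|T_j|\le 1$ on $\hI$, all arguments stay inside a fixed $\kappa=\mathcal{O}(1)$, so $\kappa$ never grows. At $x=\pm1$ every $T_\ell(\pm1)\in\{-1,1\}$, and \eqref{eq:multone} makes the product network reproduce any product with a $\pm1$ factor exactly; hence $\widetilde T_\ell(\pm1)=T_\ell(\pm1)$ for all $\ell$, giving $\realiz(\Phi^{\bsv}_{\tau})_i(\pm1)=v_i(\pm1)$. In the interior I would propagate the $W^{1,\infty}$ bounds of Proposition \ref{prop:multiplicationThing}: the value error accumulates only mildly, whereas the derivative error at level $k$ is amplified by the Chebyshev derivative sizes $\|T_j'\|_{L^\infty(\hI)}\le j^2\le 4^k$ furnished by Markov's inequality (Lemma \ref{lem:markovoned}). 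Across the $L_0\approx\log_2 n$ levels this makes the cumulative derivative amplification as large as $2^{\Theta((\log_2 n)^2)}$, so to force $\|T_\ell-\widetilde T_\ell\|_{W^{1,\infty}(\hI)}\le\tau$ uniformly in $\ell$ one must choose the per-level tolerance with $\log_2(1/\delta_k)\sim\log_2(1/\tau)+\sum_{j>k}\mathcal{O}(j)$. Summing $\depth(\widetilde{\times}_{\delta_k,\kappa})\le C_L\log_2(\max\{\kappa,1\}/\delta_k)+C'_L$ over the levels then reproduces the leading depth term $\propto(1+\log_2 n)\log_2(1/\tau)$ together with the cubic correction $\tfrac{2}{3}C_{\depth}(\log_2 n)^3$ (from $\sum_k\sum_{j>k}j\approx\tfrac23 L_0^3$) and the quadratic remainder. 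The analogous summation of the block sizes, weighted by the $\mathcal{O}(2^k)$ count at level $k$, gives the shared size $\mathcal{O}(n\log_2(1/\tau)+n\log_2 n)$. Finally, the output layer forms $\sum_{\ell=0}^n v_{i,\ell}\widetilde T_\ell$ exactly for $\ell\in\{0,1\}$, so $\|v_i-\realiz(\Phi^{\bsv}_{\tau})_i\|_{W^{1,\infty}(\hI)}\le\sum_{\ell=2}^n|v_{i,\ell}|\,\|T_\ell-\widetilde T_\ell\|_{W^{1,\infty}(\hI)}\le\tau C_{\mathrm s}$, with the nonzero output weights being precisely the nonzero coefficients, whence $\sizelast\le 2C_{\mathrm n}$.

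I expect the derivative ($W^{1,\infty}$) error propagation to be the main obstacle. Because Chebyshev derivatives grow like the square of the degree, a fixed per-product tolerance would let the derivative error blow up geometrically across the levels; the delicate point is to schedule the level-dependent tolerances $\delta_k$ just small enough to cancel the $2^{\Theta((\log_2 n)^2)}$ amplification, yet no smaller, which is exactly what yields the stated depth constants. Tracking simultaneously the identity/carry-forward channels, the per-level parallelization and sparse-concatenation overheads, and the bounds on $\sizefirst$, $\sizelast$ and the $N_{\bsv}$-dependence through $C_{\mathrm n}$ is then routine but lengthy calculus bookkeeping.
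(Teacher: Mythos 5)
Your proposal follows essentially the same route as the paper's proof: a level-by-level doubling construction of ReLU emulations of the $T_\ell$ via the multiplicative recursion \eqref{eq:chebrecur} and the product networks of Proposition \ref{prop:multiplicationThing}, with level-dependent tolerances chosen to offset the $O(\ell^2)$ derivative amplification from Markov's inequality (the paper's Lemma \ref{lem:relupolynominduction} takes $\theta = 2^{-2k-4}\delta$ per induction step, which yields exactly your $\tfrac{2}{3}C_{\depth}(\log_2 n)^3$ depth correction), exactness at $\pm1$ via \eqref{eq:multone}, and an output layer whose weights are the \Cheb coefficients. The only notable deviation is your unbalanced splitting $T_{2^{k-1}+r}=2T_{2^{k-1}}T_r-T_{2^{k-1}-r}$: the paper splits $\ell=\lceil \ell/2\rceil+\lfloor \ell/2\rfloor$ so that the correction term is always $T_0=1$ or $T_1=x$ and hence exact and available for free, whereas your correction term is an approximate $\widetilde T_{2^{k-1}-r}$ that must be carried forward through identity channels and contributes an additional (easily absorbed) error term; this changes only bookkeeping, not the asymptotics.
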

We remark that the emulation bounds in Proposition~\ref{prop:relupolynom} 
are better than, e.g. the results in \cite[Proposition 2.3]{MontYangDu21}.
There, the bounds on the network size depend quadratically on the polynomial degree.

The proof of Proposition \ref{prop:relupolynom} is 
based on Lemma \ref{lem:relupolynominduction} below.
It proceeds along the lines and uses ideas in the proof of \cite[Proposition 4.2]{OPS2020}.
As in \cite[Lemma 4.5]{OPS2020}, 
we compute the basis polynomials (in this case \Cheb polynomials) 
using a binary tree of product networks
introduced in Proposition \ref{prop:multiplicationThing}.
The main difference with \cite{OPS2020}
is that we use the recurrence relation \eqref{eq:chebrecur}.
We use it for $|m-n|\in\{0,1\}$, 
such that only one product network is required to compute 
$T_{m+n}(x)$ from $T_{m}(x)$, $T_{n}(x)$ and $T_{|m-n|}(x) \in \{ 1, x \}$. 

The networks constructed in the proofs of 
Lemma \ref{lem:relupolynominduction} and Proposition \ref{prop:relupolynom}
were already defined in \cite[Appendix A]{HOS2022}.
Hitherto, their depth, size and approximation error had not yet been analyzed.

For $k \in \N$, 
the following lemma describes a binary tree-structured ReLU NN 
with $2^{k-1}+2$ outputs.
The first output equals the NN input $x$, 
the other outputs emulate high-order \Cheb polynomials of degrees $2^{k-1}, \ldots, 2^{k}$
to prescribed accuracy $\delta \in (0,1)$.
We retain to the extent possible notation from \cite{OPS2020}.
\begin{lemma}[{\cite[Lemma~6.2.3]{JOdiss}}]
\label{lem:relupolynominduction}
Let $\hI = [-1,1]$.
For all $k\in\N$ there exist NNs $\{\Psi^{k}_{\delta}\}_{\delta\in(0,1)}$ 
with input dimension one and output dimension $2^{k-1}+2$ such that,
denoting all components of the output, except for the first one, by
$\widetilde{T}_{\ell,\delta} := \realiz(\Psi^{k}_{\delta})_{2+\ell-2^{k-1}}$
for $\ell\in\{2^{k-1},\ldots,2^{k}\}$, it holds that 
\begin{align}
\nonumber
\realiz(\Psi^{k}_{\delta})(x) 
	= &\, \big( x, \widetilde{T}_{2^{k-1},\delta}(x), \ldots, \widetilde{T}_{2^k,\delta}(x) \big),
	\hspace{1em} x \in \hI,
	\\	
\normc[W^{1,\infty}(\hI)]{T_{\ell}-\widetilde{T}_{\ell,\delta}}
	\leq &\, \delta,
	\hspace{1em} \ell \in \{2^{k-1},\ldots,2^k\},
	\label{eq:induction_monomerror}
	\\	
\widetilde{T}_{\ell,\delta}(\pm1) = &\, T_{\ell}(\pm1) = (\pm1)^\ell,
	\hspace{1em} \ell \in \{2^{k-1},\ldots,2^k\},
	\label{eq:induction_bdryvalues}
	\\
\depth(\Psi^{k}_{\delta})
	\leq& \, 
	C_{\depth} \big( \tfrac{2}{3}k^3 + 3k^2 + k\log_2(1/\delta) \big)
		+ (5C_{\depth}+C'_{\depth}+2) k,
	\label{eq:induction_depth}
	\\
\size(\Psi^{k}_{\delta})
	\leq &\, 4C_{\size} k 2^k + C_{\size} 2^k \log_2(1/\delta)
		+ 4kC_{\depth} \log_2(1/\delta)
	\nonumber
	\\
	&\, + C_1 2^k + \tfrac{8}{3} C_{\depth} k^3 + 12 C_{\depth} k^2 + C_2 k,
	\label{eq:induction_size}
	\\
\sizefirst(\Psi^{k}_{\delta})
	\leq &\, C_{\first} + 4,
	\label{eq:induction_sizefirst}
	\\
\sizelast(\Psi^{k}_{\delta})
	\leq &\, 2^{k+1}+C_{\last}+4
	\label{eq:induction_sizelast}
.
\end{align}
Here,
$C_1 := 9C_{\size} + C'_{\size} + C_{\first} + C_{\last} + 14$,
$C_2 := 20 C_{\depth} + 4C'_{\depth} + C_{\last} + 24$.
\end{lemma}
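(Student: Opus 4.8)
**

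The plan is to construct the networks $\Psi^k_\delta$ by induction on $k\in\N$, using the multiplicative three-term recursion \eqref{eq:chebrecur} together with the product network $\widetilde{\times}_{\delta',\kappa}$ from Proposition \ref{prop:multiplicationThing} as the sole nonlinear building block. At level $k$ I need the Chebyshev polynomials of degrees $2^{k-1},\ldots,2^k$, and the key observation is that each such degree can be written as a sum $m+n$ with $|m-n|\in\{0,1\}$, i.e. $T_{2\ell}=2T_\ell^2-T_0$ and $T_{2\ell+1}=2T_{\ell+1}T_\ell-T_1$. Thus every target polynomial at level $k$ is obtained from \emph{two} polynomials at level $k-1$ (degrees $\ell,\ell+1$ with $2^{k-2}\le \ell\le 2^{k-1}$) via a \emph{single} product network, with the subtracted term $T_{|m-n|}\in\{1,x\}$ available either as a constant (handled by a bias) or as the retained first output $x$. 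This is why only one multiplication per new polynomial is needed, halving the work relative to a monomial-based tree.

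The construction proceeds in the following steps. First I set up the base case $k=1$: here $T_2(x)=2x^2-1$ and $T_1(x)=x$, so $\Psi^1_\delta$ is essentially one product network $\widetilde{\times}_{\delta',1}$ computing (an approximation of) $x\cdot x$, together with a parallel identity channel carrying $x$ through, with the affine combination $2(\cdot)-1$ folded into the output layer. Second, for the inductive step I take $\Psi^{k-1}_\delta$, which outputs $x$ and approximations of $T_{2^{k-2}},\ldots,T_{2^{k-1}}$, and append \emph{one} further layer-block of roughly $2^{k-1}$ product networks $\widetilde{\times}_{\delta',\kappa}$ in parallel (via Proposition \ref{prop:parall}), each fed the appropriate pair of lower-degree outputs; I carry $x$ forward using $\Phi^{\mathrm{Id}}_{d,L}$ from Proposition \ref{prop:Id} so that it remains available, and I realize the recursion's affine part $2(\cdot)-T_{|m-n|}$ in the weights. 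Third, to control the accuracy I must propagate the $W^{1,\infty}$-error through the product: if the two inputs are within $\delta_{k-1}$ of the true $T_m,T_n$ in $W^{1,\infty}$, and the product network has tolerance $\delta'$, then the output is within roughly $2(\kappa\,\delta_{k-1}+\delta'+\delta_{k-1})$, using that $\|T_\ell\|_{W^{1,\infty}}$ is at most polynomial in the degree via the Markov/Bernstein-type bound $\|T_\ell'\|_{L^\infty}=\ell^2$; this yields a recursion $\delta_k\lesssim C\,\delta_{k-1}+C\delta'$ that I solve so that the accumulated error over the $k$ levels stays below the prescribed $\delta$, forcing each $\delta'\sim \delta/(\text{poly in }k, 2^k)$ and hence contributing the $\log_2(1/\delta)$ terms. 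Fourth, the exactness at the endpoints \eqref{eq:induction_bdryvalues} follows from \eqref{eq:multone}: at $x=\pm1$ all the $T_\ell(\pm1)=(\pm1)^\ell$ lie in $\{-1,1\}$, so each product network multiplies exactly and the recursion reproduces $T_\ell(\pm1)$ with no error, propagated by induction. Finally I collect the depth, size, and first/last-layer counts: each level adds $O(\log_2(1/\delta')+\log_2\kappa)$ depth per product (times the $O(k)$ levels, after folding in the depth-synchronizing identities, giving the $k^3$ and $k\log_2(1/\delta)$ terms), and the sizes add the per-level width $\sim 2^k$ times the per-product cost, matching \eqref{eq:induction_size}; the output layer carries all $2^{k-1}+2$ channels, explaining the $2^{k+1}$ in \eqref{eq:induction_sizelast}.

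The main obstacle is the careful bookkeeping of the error propagation together with the depth-synchronization needed to parallelize. Because the product networks at a given level must all have the same depth to be parallelized, and because the degree-$\ell$ output may be produced at an earlier level than degree-$(\ell+1)$, I must pad with identity networks $\Phi^{\mathrm{Id}}_{d,L}$ of the right depth, and these paddings are what generate the cubic-in-$k$ contributions to the depth and size. Keeping the constants $C_{\depth},C_{\size},C_{\first},C_{\last}$ and the auxiliary $C_1,C_2$ consistent across the induction — in particular verifying that the stated closed-form bounds \eqref{eq:induction_depth}--\eqref{eq:induction_sizelast} indeed dominate the recursive estimates $\operatorname{depth}(\Psi^k)\le \operatorname{depth}(\Psi^{k-1})+O(k+\log_2(1/\delta))$ and the analogous size recursion — is the delicate part. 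The choice $|m-n|\in\{0,1\}$ is essential here: it guarantees the subtracted term is always a \emph{free} channel ($1$ or $x$) rather than an additional polynomial that would need its own product subtree, which is precisely the structural improvement over the monomial construction of \cite{OPS2020} and the source of the favorable degree dependence.
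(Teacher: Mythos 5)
Your proposal is correct and follows essentially the same route as the paper's proof: induction on $k$, one product network per new \Cheb polynomial via $T_{m+n}=2T_mT_n-T_{|m-n|}$ with $|m-n|\in\{0,1\}$ so that the subtracted term is a free channel ($1$ or $x$), endpoint exactness from \eqref{eq:multone}, and $W^{1,\infty}$ error propagation using $\|T_\ell'\|_{L^\infty(\hI)}=\ell^2$. The one bookkeeping nuance is that the cubic-in-$k$ terms are driven primarily by the geometric tightening of the per-level product tolerance (the paper takes $\theta=2^{-2k-4}\delta$ in the induction hypothesis, so the innermost tolerance is $\delta\,2^{-O(k^2)}$); the identity carry-forward networks then inherit these depths, which is how the $k^3$ also enters the size bound, rather than the padding being the root cause.
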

\begin{proof}
This proof closely follows that of \cite[Lemma 4.5]{OPS2020},
proving the lemma by induction over $k\in \N$.

\emph{Induction basis.} 
We first prove the result for $k=1$
for all $\delta\in(0,1)$.
Let $L_1 := \depth(\widetilde{\times}_{\delta/4,1})$
and define the matrix $A := [1,1]^\top \in\R^{2\times 1}$,
the vector ${b} := [-1]\in\R^1$,
and the matrices and vectors $A_i$, $b_i$, $i=1,\ldots,L_1$ such that
$\widetilde{\times}_{\delta/4,1} =: ((A_1,b_1),\ldots,(A_{L_1},b_{L_1}))$ 
as in Proposition \ref{prop:multiplicationThing}.
We then define 
$$
\Psi^{1}_{\delta} := 
\Parallel{ \Phi^{\Id}_{1,L_1} , \Phi^{\Id}_{1,L_1} , 
	((A_1 A,b_1),\ldots,(2A_{L_1},2b_{L_1}+{b})) }.
$$
Its realizes $[\realiz(\Psi^{1}_{\delta})(x)]_{1} = x$, 
$\widetilde{T}_{1,\delta}(x) 
	:= [\realiz(\Psi^{1}_{\delta})(x)]_{2} = x = T_{1}(x)$ and 
$\widetilde{T}_{2,\delta}(x) 
	:= [\realiz(\Psi^{1}_{\delta})(x)]_{3} 
	= 2\realiz(\widetilde{\times}_{\delta/4,1})(x,x)-1$
for all $x\in\hI$.
In particular, \eqref{eq:induction_bdryvalues} follows from \eqref{eq:multone}.

For the depth and the size of $\Psi^{1}_{\delta}$ we obtain
\begin{align*}
\depth(\Psi^{1}_{\delta})
	= &\, L_1
		\leq C_{\depth}\log_2(4/\delta)+C'_{\depth},
	\\
\size(\Psi^{1}_{\delta})
	= &\, 2\size(\Phi^{\Id}_{1,L_1}) 
		+ \size(((A_1 A,b_1),\ldots,(2A_{L_1},2b_{L_1}+{b}))) 
	\\
	\leq &\, 4L_1 + \left(C_{\size}\log_2(4/\delta)+C'_{\size}+1\right) 
	\\
	\leq &\, \big(4C_{\depth} + C_{\size}\big) \log_2(4/\delta) 
		+ 4C'_{\depth} + C'_{\size}+1,
	\\
\sizefirst(\Psi^{1}_{\delta})
	= &\, 2\sizefirst(\Phi^{\Id}_{1,L_1}) 
		+ \sizefirst(((A_1 A,b_1),\ldots,(2A_{L_1},2b_{L_1}+{b}))) 
	\\
	\leq &\, C_{\first} + 4,
	\\
\sizelast(\Psi^{1}_{\delta})
	= &\, 2\sizelast(\Phi^{\Id}_{1,L_1}) 
		+ \sizelast(((A_1 A,b_1),\ldots,(2A_{L_1},2b_{L_1}+{b}))) 
	\\
	\leq &\, C_{\last} + 5.
\end{align*} 
Note that only the bound on $\sizelast(\Psi^{1}_{\delta})$ 
gives the term $C_{\last}$ in Equation \eqref{eq:induction_sizelast}.
As we will see below, 
there is no term $C_{\last}$ in the bound on $\sizelast(\Psi^{k}_{\delta})$ for $k>1$.

With Proposition \ref{prop:multiplicationThing} the error can be bounded as
\begin{align*}
\snormc[W^{1,\infty}(\hI)]{(2x^2-1)-\widetilde{T}_{2,\delta}(x)}
	= &\, \normc[L^\infty(\hI)]{4x 
		- 2[D\widetilde{\times}_{\delta/4,1}]_1(x,x) 
		- 2[D\widetilde{\times}_{\delta/4,1}]_2(x,x)}
	\\
	\leq &\, 2\tfrac{\delta}{4} + 2\tfrac{\delta}{4}
	= \delta,
	\\
\normc[W^{1,\infty}(\hI)]{T_{2}-\widetilde{T}_{2,\delta}}
	\leq &\, \delta
,
\end{align*}
where $[D\widetilde{\times}_{\delta/4,1}]$ is the Jacobian, 
which is a $1\times 2$-matrix and is treated as a row vector.
The first inequality, Poincar\'e's inequality and $\widetilde{T}_{2,\delta}(0)=-1=T_{2}(0)$ 
(which follows from Equation \eqref{eq:ZeroInZeroOutBasic})
imply the second inequality.
This shows Equation \eqref{eq:induction_monomerror} for $k=1$
and finishes the proof of the induction basis.

\emph{Induction hypothesis (IH).} 
For all $\delta\in(0,1)$ and $k\in\N$ 
we define $\theta := 2^{-2k-4} \delta$
and assume that there exists a NN $\Psi^{k}_{\theta}$ 
which satisfies \eqref{eq:induction_monomerror}--\eqref{eq:induction_sizelast} 
with $\theta$ instead of $\delta$. 

\emph{Induction step.}
For $\delta$ and $k$ as in (IH), we show that
\eqref{eq:induction_monomerror}--\eqref{eq:induction_sizelast} 
hold with $\delta$ as in (IH) and with $k+1$ instead of $k$. 

For all $\ell\in\{2^{k-1},\ldots,2^k\}$,
\begin{align}
\normc[L^\infty(\hI)]{\widetilde{T}_{\ell,\theta}}
	\leq &\, \normc[L^\infty(\hI)]{T_{\ell}}
		+ \normc[W^{1,\infty}(\hI)]{T_{\ell}-\widetilde{T}_{\ell,\theta}(x)}
	\leq 1 + \theta < 2,
	\label{eq:xldeltaLinfty}
\end{align}
so that we may use $\widetilde{T}_{\ell,\theta}(x)$ as input of 
$\widetilde{\times}_{\theta,2}$.

We define,
for $\Phi^{1,k}$ and $\Phi^{2,k}_{\delta}$ introduced below, 
\begin{align}
\label{eq:defPsik+1}
\Psi^{k+1}_{\delta} := 
	\Phi^{2,k}_{\delta} \odot \Phi^{1,k} \odot \Psi^{k}_{\theta}.
\end{align}
The NN $\Phi^{1,k}$ implements the linear map
\begin{align*}
\R^{2^{k-1}+2}\to\R^{2^{k+1}+2}:\Big(z_1,\ldots,z_{2^{k-1}+2})
 \mapsto
(z_1,z_{2^{k-1}+2},z_{2},z_{3},z_3,z_3,z_3,z_4,z_4,z_4,z_4,z_5,
\\
\ldots,z_{2^{k-1}+1},z_{2^{k-1}+2},z_{2^{k-1}+2},z_{2^{k-1}+2} \Big).
\end{align*}
Denoting its weights by $((A^{1,k},b^{1,k})):= \Phi^{1,k}$,
it holds that $b^{1,k} = 0$ and 
\begin{align*}
(A^{1,k})_{m,i} = 
	\begin{cases}
		1 & \text{if } m = 1, i = 1,
		\\
		1 & \text{if } m = 2, i = 2^{k-1} + 2,
		\\
		1 & \text{if } m \in \{3,\ldots,2^{k+1}+2\}, 
			i = \lceil \tfrac{m+5}{4} \rceil,
		\\
		0 & \text{else}.
	\end{cases}
\end{align*}
It follows that
\begin{align*}
\depth(\Phi^{1,k}) 
	= 1,
	\qquad 
\sizefirst(\Phi^{1,k}) = \sizelast(\Phi^{1,k}) = \size(\Phi^{1,k})  
	\leq 2^{k+1}+2
.
\end{align*}
With $L_{\theta} := \depth(\widetilde{\times}_{\theta,2})$ we define 
\begin{align*}
\Phi^{2,k}_{\delta} 
	:= ((A^{2,k},b^{2,k}))
	\sconc \dParallel{ \Phi^{\Id}_{2,L_{\theta}} , 
	\widetilde{\times}_{\theta,2} , \ldots , \widetilde{\times}_{\theta,2} },
\end{align*}
containing $2^{k}$ $\widetilde{\times}_{\theta,2}$-networks,
with $A^{2,k}\in\R^{(2^k+2)\times (2^k+2)}$ and $b^{2,k}\in\R^{2^k+2}$ 
defined as
\begin{align*}
(A^{2,k})_{m,i} := \begin{cases}
	1 & \text{if } m = i \leq 2, \\
	2 & \text{if } m = i \geq 3, \\
	-1 & \text{if } m\geq3 \text{ is odd}, i=1, \\ 
	0 & \text{else},
	\end{cases}
\qquad
(b^{2,k})_m = \begin{cases} 
	-1 & \text{if } m\geq 3 \text{ is even}, \\ 
	0 & \text{else}.
	\end{cases}
\end{align*}
The network depth and size of $\Phi^{2,k}_{\delta}$ can be estimated as follows: 
$\depth((A^{2,k},b^{2,k})) = 1$, 
$\size((A^{2,k},b^{2,k})) = \sizefirst((A^{2,k},b^{2,k})) = \sizelast((A^{2,k},b^{2,k}))
	= 2\cdot2^k+2$
and 
\begin{align*}
\depth(\Phi^{2,k}_{\delta})
	= &\, \depth((A^{2,k},b^{2,k})) + \depth(\widetilde{\times}_{\theta,2})
		\leq 1+ C_{\depth} \big( \log_2(2/\theta) \big) + C'_{\depth}\\
	= &\, C_{\depth} \big( 2k+5+\log_2(1/\delta) \big) + C'_{\depth} +1,
	\\
\size(\Phi^{2,k}_{\delta})
	\leq &\, \size((A^{2,k},b^{2,k})) + \sizefirst((A^{2,k},b^{2,k}))
		+ \sizelast(\Phi^{\Id}_{2,L_{\theta}}) 
		+ 2^{k} \sizelast(\widetilde{\times}_{\theta,2})
		+ \size(\Phi^{\Id}_{2,L_{\theta}}) 
	\\
	&\, + 2^{k} \size(\widetilde{\times}_{\theta,2})
	\\
	\leq &\, (2^{k+1} +2) + (2^{k+1} +2) + 4 + C_{\last} 2^k 
		+ 4 \depth(\widetilde{\times}_{\theta,2}) 
		+ 2^{k} \size(\widetilde{\times}_{\theta,2})
	\\
	\leq &\, (4C_{\depth} + C_{\size} 2^k) \log_2(2/\theta)
		 + (4 + C_{\last} + C'_{\size}) 2^k + 4C'_{\depth} + 8
	\\
	\leq &\, (4C_{\depth} + C_{\size} 2^k) (2k + 5 + \log_2(1/\delta))
		 + (4 + C_{\last} + C'_{\size}) 2^k + 4C'_{\depth} + 8
	\\
\sizefirst(\Phi^{2,k}_{\delta})
	= &\, \sizefirst(\Phi^{\Id}_{2,L_{\theta}}) 
		+ 2^k \sizefirst(\widetilde{\times}_{\theta,2})
	\leq C_{\first} 2^k + 4,
	\\
\sizelast(\Phi^{2,k}_{\delta})
	= &\, 2 \sizelast((A^{2,k},b^{2,k}))
	\leq 2^{k+2}+4
.
\end{align*}

The network $\Psi^{k+1}_{\delta}$
defined in Equation \eqref{eq:defPsik+1} realizes 
\begin{align}
[\realiz(\Psi^{k+1}_{\delta})(x)]_{1}
	&= x,
	\qquad
	\text{ for }
	x\in\hI,
	\label{eq:psik+1R1}
	\\
[\realiz(\Psi^{k+1}_{\delta})(x)]_{2}
	&= \widetilde{T}_{2^k,\theta}(x),
	\qquad
	\text{ for }
	x\in\hI,
	\label{eq:psik+1R2}
	\\
[\realiz(\Psi^{k+1}_{\delta})(x)]_{\ell+2-2^k}
	&= 2\realiz(\widetilde{\times}_{\theta,2})\left(\widetilde{T}_{\lceil \ell/2 \rceil,\theta}(x), 
		\widetilde{T}_{\lfloor \ell/2 \rfloor,\theta}(x)\right) 
		- x^{\lceil \ell/2 \rceil - \lfloor \ell/2 \rfloor},
	\label{eq:psik+1Rell+1-2^k}
	\\\nonumber
	&\qquad
	\text{ for }
	x\in\hI
	\text{ and }
	\ell \in\{2^k+1,\ldots,2^{k+1}\},
\end{align}
where $x^{\lceil \ell/2 \rceil - \lfloor \ell/2 \rfloor}=x=T_{1}(x)$ if $\ell$ is odd
and $x^{\lceil \ell/2 \rceil - \lfloor \ell/2 \rfloor}=1=T_{0}(x)$ if $\ell$ is even.
For $\ell \in\{2^k+1,\ldots,2^{k+1}\}$ and $x\in\hI$ 
we denote the right-hand side of \eqref{eq:psik+1Rell+1-2^k} by
\begin{align*}
\widetilde{T}_{\ell,\delta}(x)
	:= &\, [\realiz(\Psi^{k+1}_{\delta})(x)]_{\ell+2-2^k}.
\end{align*}

We bound the depth and the size of $\Psi^{k+1}_{\delta}$.
\begin{align*}
\depth(\Psi^{k+1}_{\delta})
	= &\, \depth(\Phi^{2,k}_{\delta}) + \depth(\Phi^{1,k}) 
		+ \depth(\Psi^{k}_{\theta})
	\\
	\leq &\, \left( C_{\depth} \big( 2k+5+\log_2(1/\delta) \big) 
		+ C'_{\depth} +1 \right) + 1
	\\
	&\, + \left(C_{\depth} \big(\tfrac{2}{3}k^3 + 3k^2 
		+ k \log_2(2^{2k+4}/\delta) \big) 
		+ (5 C_{\depth} + C'_{\depth} + 2) k \right)
	\\
	\leq &\, C_{\depth}\big( \tfrac{2}{3}(k+1)^3 + 3(k+1)^2 
		+ (k+1)\log_2(1/\delta) \big) + (5C_{\depth} + C'_{\depth}+2) (k+1),
	\\
\size(\Psi^{k+1}_{\delta})
	\leq &\, \size(\Phi^{2,k}_{\delta}) + \sizefirst(\Phi^{2,k}_{\delta})
		+ \sizelast(\Phi^{1,k}\odot \Psi^{k}_{\theta})
		+ \size(\Phi^{1,k}\odot\Psi^{k}_{\theta})
	\\
	\leq &\, \size(\Phi^{2,k}_{\delta}) + \sizefirst(\Phi^{2,k}_{\delta}) 
		+ 2\sizelast(\Phi^{1,k}) + \size(\Phi^{1,k}) + \sizefirst(\Phi^{1,k})
		+ \sizelast(\Psi^{k}_{\theta}) 
	\\
	&\, + \size(\Psi^{k}_{\theta})
	\\
	\leq &\, \left( (4C_{\depth} + C_{\size} 2^k) (2k + 5 + \log_2(1/\delta))
		 + (4 + C_{\last} + C'_{\size}) 2^k + 4C'_{\depth} + 8 \right) 
	\\
	&\, + (C_{\first}2^k+4)
		+ 2 (2^{k+1}+2) 
		+ (2^{k+1}+2)
		+ (2^{k+1}+2) 
		+ (2^{k+1} + C_{\last} + 4) 
	\\
	&\, + \Big( 4C_{\size} k 2^k + C_{\size} 2^k \log_2(2^{2k+4}/\delta)
		+ 4kC_{\depth}\log_2(2^{2k+4}/\delta)
		+ C_1 2^k 
	\\
	&\, + \tfrac{8}{3}C_{\depth} k^3  + 12k^2C_{\depth}  + C_2 k \Big)
	\\
	\leq &\, 4C_{\size} (k+1) 2^{k+1} + C_{\size} 2^{k+1} \log_2(1/\delta)
		+ 4(k+1) C_{\depth} \log_2(1/\delta) 
	\\
	&\, + C_1 2^{k+1} + \tfrac{8}{3} C_{\depth} (k+1)^3 + 12C_{\depth}(k+1)^2  
		+ C_2 (k+1),
	\\
C_1 := &\, 9C_{\size} + C'_{\size} + C_{\first} + C_{\last} + 14,
	\\
C_2 := &\, 20 C_{\depth} + 4C'_{\depth} + C_{\last} + 24,
	\\
\sizefirst(\Psi^{k+1}_{\delta})
	= &\, \sizefirst(\Psi^{k}_{\theta}) 
		\leq C_{\first} + 4,
	\\
\sizelast(\Psi^{k+1}_{\delta})
	= &\, \sizelast(\Phi^{2,k}_{\delta}) \leq 2^{(k+1)+1} +4
.
\end{align*}
This proves
Equations \eqref{eq:induction_depth}--\eqref{eq:induction_sizelast} for $k+1$.

It remains to show Equations \eqref{eq:induction_monomerror}--\eqref{eq:induction_bdryvalues}.
For $\ell=2^{k}$ Equation \eqref{eq:induction_monomerror} 
follows from $\theta<\delta$ and Equation \eqref{eq:psik+1R2}.
Towards a proof for $\ell\in\{2^k+1,\ldots,2^{k+1}\}$,
we note that $\normc[L^\infty(\hI)]{T_{m}'} = m^2$ for all $m\in\N$:
the inequality $\normc[L^\infty(\hI)]{T_{m}'} \geq m^2$ follows from
$U_{m-1}(1)=m$ (\cite[Section 1.5.1]{Gautschi2004}) and $T_{m}' = mU_{m-1}$, 
where $U_{m-1}$ is the \Cheb polynomial of the second kind of degree $m-1$.
The converse inequality is Markov's inequality Lemma \ref{lem:markovoned}.
For $\ell\in\{2^k+1,\ldots,2^{k+1}\}$ we write $\ell_0:=\lceil \ell/2\rceil$
and observe that for $m\in\{\ell_0,\ell-\ell_0\}$ 
\begin{align*}
\normc[L^\infty(\hI)]{\widetilde{T}_{m,\theta}}
	\leq &\, 1 + \theta 
	< 2
	,
	\\
\normc[L^\infty(\hI)]{\tfrac{\mathrm{d}}{\mathrm{d} x}\widetilde{T}_{m,\theta}}
	\leq &\, \normc[L^\infty(\hI)]{T_{m}'} 
		+ \normc[W^{1,\infty}(\hI)]{T_{m}-\widetilde{T}_{m,\theta}}
	\leq m^2 + \theta 
	< m^2 + 1
.
\end{align*}
With Equations \eqref{eq:chebrecur} and \eqref{eq:psik+1Rell+1-2^k} we find
(note that the $x^{\lceil \ell/2 \rceil - \lfloor \ell/2 \rfloor}$-terms cancel)
\begin{align*}
\normc[L^\infty(\hI)]{T_{\ell} - \widetilde{T}_{\ell,\delta}}
	\leq &\, \normc[L^\infty(\hI)]{ 2T_{\ell_0}{} \left( T_{\ell-\ell_0}{} 
			- \widetilde{T}_{\ell-\ell_0,\theta}{} \right) }
		+ \normc[L^\infty(\hI)]{ 2\widetilde{T}_{\ell-\ell_0,\theta}{} 
			\left( T_{\ell_0}{} - \widetilde{T}_{\ell_0,\theta}{} \right) }
	\\
	&\, + \normc[L^\infty(\hI)]{ 2 \widetilde{T}_{\ell_0,\theta}{}
			\widetilde{T}_{\ell-\ell_0,\theta}{}
			- 2 \realiz(\widetilde{\times}_{\theta,2}) \big( 
			\widetilde{T}_{\ell_0,\theta}{} , 	
			\widetilde{T}_{\ell-\ell_0,\theta}{} \big) }
	\\
	\leq &\, 2\theta + 4\theta + 2\theta \leq \delta
	,
	\\
\snormc[W^{1,\infty}(\hI)]{T_{\ell}-\widetilde{T}_{\ell,\delta}}
	\leq &\, \normc[L^\infty(\hI)]{ 2 T_{\ell-\ell_0}{} \tfrac{\mathrm{d}}{\mathrm{d}x} T_{\ell_0}{}
		- 2[D\realiz(\widetilde{\times}_{\theta,2})]_1 \big( 
			\widetilde{T}_{\ell_0,\theta}{} , 	
			\widetilde{T}_{\ell-\ell_0,\theta}{} \big) 
		\tfrac{\mathrm{d}}{\mathrm{d} x} \widetilde{T}_{\ell_0,\theta}{} }
	\\
	&\, + \normc[L^\infty(\hI)]{ 2 T_{\ell_0}{} \tfrac{\mathrm{d}}{\mathrm{d} x} T_{\ell-\ell_0}{}
		- 2[D\realiz(\widetilde{\times}_{\theta,2})]_2 \big( 
			\widetilde{T}_{\ell_0,\theta}{} , \
			\widetilde{T}_{\ell-\ell_0,\theta}{} \big) 
		\tfrac{\mathrm{d}}{\mathrm{d} x} \widetilde{T}_{\ell-\ell_0,\theta}{} }
	\\
	\leq &\, \normc[L^\infty(\hI)]{ 2 \left( \tfrac{\mathrm{d}}{\mathrm{d}x} T_{\ell_0}{} \right)
		\big( T_{\ell-\ell_0}{} - \widetilde{T}_{\ell-\ell_0,\theta}{}\big) }
	\\
	&\, + \normc[L^\infty(\hI)]{ 2\widetilde{T}_{\ell-\ell_0,\theta}{} 
		\big( \tfrac{\mathrm{d}}{\mathrm{d}x} T_{\ell_0}{}
			- \tfrac{\mathrm{d}}{\mathrm{d}x} \widetilde{T}_{\ell_0,\theta}{} \big) }
	\\
	&\, + \normc[L^\infty(\hI)]{ 2\big( \widetilde{T}_{\ell-\ell_0,\theta}{} - 
		[D\realiz(\widetilde{\times}_{\theta,2})]_1 \big( \widetilde{T}_{\ell_0,\theta}{} , 
			\widetilde{T}_{\ell-\ell_0,\theta}{} \big) \big) 
		\tfrac{\mathrm{d}}{\mathrm{d}x} \widetilde{T}_{\ell_0,\theta}{} }
	\\
	&\, + \normc[L^\infty(\hI)]{ 2 \left( \tfrac{\mathrm{d}}{\mathrm{d}x} T_{\ell-\ell_0}{} \right)
		\big( T_{\ell_0}{} - \widetilde{T}_{\ell_0,\theta}{} \big) }
	\\
	&\, + \normc[L^\infty(\hI)]{ 2\widetilde{T}_{\ell_0,\theta}{} 
		\big( \tfrac{\mathrm{d}}{\mathrm{d}x}T_{\ell-\ell_0}{} 
			- \tfrac{\mathrm{d}}{\mathrm{d} x} \widetilde{T}_{\ell-\ell_0,\theta}{} 
		\big) }
	\\
	&\, + \normc[L^\infty(\hI)]{ 2\big( \widetilde{T}_{\ell_0,\theta}{} 
		- [D\realiz(\widetilde{\times}_{\theta,2})]_2 \big( \widetilde{T}_{\ell_0,\theta}{} , 
			\widetilde{T}_{\ell-\ell_0,\theta}{} \big) \big) 
		\tfrac{\mathrm{d}}{\mathrm{d} x} \widetilde{T}_{\ell-\ell_0,\theta}{} }
	\\
	\stackrel{\eqref{eq:xldeltaLinfty},\text{(IH)}}{\leq} 
	&\, 2(\ell_0)^2 \theta + 4\theta + 2((\ell_0)^2+1) \theta + 2(\ell-\ell_0)^2 \theta 
		+ 4\theta + 2((\ell-\ell_0)^2+1) \theta
	\\
	\leq &\, (4\ell^2 -8\ell_0(\ell-\ell_0)+12)\theta 
	\leq (4\ell^2 -16+12)\theta
	\leq  4\ell^2 \theta 
	\leq \delta,
\end{align*}
where $[D\realiz(\widetilde{\times}_{\delta,2})]$ is the Jacobian 
and where we have used that $3\leq \ell \leq 2^{k+1}$, 
$\ell_0\geq 2$, $\ell-\ell_0\geq1$,
and $4 \ell^2\leq 2^{2k+4}$.
We conclude that
$\norm[W^{1,\infty}(\hI)]{T_{\ell} - \widetilde{T}_{\ell,\delta}}
	\leq \delta$.
Finally, we conclude \eqref{eq:induction_bdryvalues} from \eqref{eq:psik+1Rell+1-2^k} and \eqref{eq:multone}.

This finishes the induction step: 
based on the induction hypothesis for $\delta\in(0,1)$ and $k\in\N$,
we have constructed $\Psi^{k+1}_{\delta}$ and shown 
Equations \eqref{eq:induction_monomerror}--\eqref{eq:induction_sizelast} 
for 
$k+1$ instead of $k$. 
Together with the induction basis, this finishes the proof of the lemma.
\end{proof}
Because the number of concatenations in the inductive construction of $\Psi^{k}_{\delta}$ depends on $k$,
we had to use the sharper bound in Equation \eqref{eq:concsize} 
involving $\sizefirst(\cdot)$ and $\sizelast(\cdot)$.
Using the second inequality in \eqref{eq:concsize} would have introduced an extra $k$-dependent factor 
in the bound on the network size.

\begin{proof}[Proof of Proposition \ref{prop:relupolynom}]
We define $v_{i,\ell}:=0$ for all $\ell>n$ and $i=1,\ldots,{N_{\bsv}}$.

If $n=1$, for all $\tau \in(0,1)$ we define 
$\Phi^{\bsv}_{\tau}:= ((A,b))$
for $A = (A_{i1})_{i=1}^{N_{\bsv}} = (v_{i,1})_{i=1}^{N_{\bsv}}\in\R^{{N_{\bsv}}\times1}$ 
and $b= (b_{i})_{i=1}^{N_{\bsv}} = (v_{i,0})_{i=1}^{N_{\bsv}}\in\R^{N_{\bsv}}$. 
It follows that 
$\normc[W^{1,\infty}(\hI)]{v_i-\realiz(\Phi^{\bsv}_{\tau})_i} = 0$ for all $i\in\{1,\ldots,{N_{\bsv}}\}$,
$\depth(\Phi^{\bsv}_{\tau}) = 1$ and  
$\size(\Phi^{\bsv}_{\tau}) 
	= \sizefirst(\Phi^{\bsv}_{\tau})
	= \sizelast(\Phi^{\bsv}_{\tau})
	\leq C_{\mathrm n}$. 

If $n\geq2$, let 
$k:=\lceil \log_2(n)\rceil$ and $\delta = {\tau}$.
We use Lemma \ref{lem:relupolynominduction} and take $\{\ell_j\}_{j=1}^k\in\N^k$ such that 
$\depth\left( \Psi^k_{\delta} \right)+1 
	= \depth\left( \Psi^j_{\delta} \right) + \ell_j$ for $j=1,\ldots,k$, 
and thus $\ell_j\leq \max_{j=1}^k \depth\left( \Psi^j_{\delta} \right) = \depth\left( \Psi^k_{\delta} \right)$. 
We define 
\begin{align*}
\Phi^{\bsv}_{\tau} := \Phi^{3,n} \sconc 
	\Parallel{\Psi^1_{\delta}\sconc\Phi^{\Id}_{1,\ell_1},\ldots,
		\Psi^k_{\delta}\sconc\Phi^{\Id}_{1,\ell_k}}.
\end{align*}
The NN $\Phi^{3,n}$ emulates the affine map $\R^{2^k+2k-1}\to\R^{N_{\bsv}}$:
\begin{align*}
(z_1,\ldots,z_{2^k+2k-1}) \mapsto 
	\left(v_{i,0} + v_{i,1} z_2 + v_{i,2} z_3
	+ \sum_{j=2}^k\sum_{\ell = 2^{j-1}+1 }^{2^{j}} v_{i,\ell} z_{\ell+2j-1}\right)_{i=1}^{N_{\bsv}}.
\end{align*}
It satisfies $\depth(\Phi^{3,n}) = 1$ and
$\size(\Phi^{3,n}) = \sizefirst(\Phi^{3,n}) = \sizelast(\Phi^{3,n}) 
	\leq C_{\mathrm n}$.

The realization satisfies
\begin{align*}
\realiz(\Phi^{\bsv}_{\tau})_i(x) 
	= v_{i,0} + \sum_{\ell=1}^{2^k} v_{i,\ell} \widetilde{T}_{\ell,\delta}(x),
	\qquad
	x \in \hI, \, i\in\{1,\ldots,{N_{\bsv}}\}.
\end{align*}
Exactness in the points $\pm1$ follows from 
Equation \eqref{eq:induction_bdryvalues} in Lemma \ref{lem:relupolynominduction}.

The depth and the size of $\Phi^{\bsv}_{\tau}$ can be bounded, using that $2^k\leq 2n$:
\begin{align*}
\depth\left(\Phi^{\bsv}_{\tau}\right)=&\,\depth\left(\Phi^{3,n}\right) 
	+ \left( \depth\left(\Psi^{k}_{\delta}\right) + 1\right)
	\\
	\leq &\, 2 + \left(C_{\depth} \left(\tfrac{2}{3}k^3 
		+ k\log_2\left({1/{\tau}}\right) \right) + C k^2 \right)
	\\
	\leq &\, C_{\depth}(1+\log_2(n))\log_2\left({1/{\tau}}\right)
		+ \tfrac{2}{3}C_{\depth}\log_2^3(n) + C\log_2^2(n),
	\\
\size\left(\Phi^{\bsv}_{\tau}\right) 
	\leq &\, \size\left(\Phi^{3,n}\right) + \sizefirst\left(\Phi^{3,n}\right)
		+ \sum_{j=1}^k \sizelast\left(\Psi^{j}_{\delta}\sconc\Phi^{\Id}_{1,\ell_j}\right) 
		+ \sum_{j=1}^k \size\left(\Psi^{j}_{\delta}\sconc\Phi^{\Id}_{1,\ell_j}\right) 
	\\
	\leq &\, \size\left(\Phi^{3,n}\right) + \sizefirst\left(\Phi^{3,n}\right)
		+ \sum_{j=1}^k \sizelast\left(\Psi^{j}_{\delta}\right) 
		+ \sum_{j=1}^k \size\left(\Psi^{j}_{\delta}\right) 
		+ \sum_{j=1}^k \sizefirst\left(\Psi^{j}_{\delta}\right) 
	\\
	&\, + \sum_{j=1}^k \sizelast\left(\Phi^{\Id}_{1,\ell_j}\right) 
		+ \sum_{j=1}^k \size\left(\Phi^{\Id}_{1,\ell_j}\right) 
	\\
	\leq &\, C_{\mathrm n} + C_{\mathrm n}
		+ \sum_{j=1}^k \left( 2^{j+1}+C_{\last}+4 \right)
	\\
	&\, 
		+ \sum_{j=1}^k \left( 4C_{\size} j 2^j 
		+ C_{\size} 2^j \log_2\left({1/{\tau}}\right)
		+ 4jC_{\depth} \log_2\left({1/{\tau}}\right) + C 2^j \right)
	\\
	&\, 
		+ \left(C_{\first}+4\right) k
		+ 2k
		+ 2k \left(C_{\depth} \left(\tfrac{2}{3}k^3 
		+ k\log_2\left({1/{\tau}}\right) \right) + C k^2 \right)
	\\
	\leq &\, 4C_{\size} n \log_2\left({1/{\tau}}\right) 
		+ 16C_{\size} n\log_2(n)
		+ 6C_{\depth} (1+\log_2(n))^{2} \log_2\left({1/{\tau}}\right) 
	\\
	&\, + C(n + C_{\mathrm n}),
	\\
\sizefirst\left(\Phi^{\bsv}_{\tau}\right)
	= &\, \sum_{j=1}^k \sizefirst\left(\Psi^{j}_{\delta}\sconc\Phi^{\Id}_{1,\ell_j}\right)
	\leq \sum_{j=1}^k 2\sizefirst\left(\Phi^{\Id}_{1,\ell_j}\right)
	= 4k \leq 4(\log_2(n)+1),
\\
\sizelast\left(\Phi^{\bsv}_{\tau}\right)
	= &\, 2\sizelast\left(\Phi^{3,n}\right)
	\leq 2C_{\mathrm n}.
\end{align*}

With the error bounds from Lemma \ref{lem:relupolynominduction} we obtain that
for all $i\in\{1,\ldots,{N_{\bsv}}\}$
\begin{align*}
\normc[W^{1,\infty}(\hI)]{v_i-\realiz\left(\Phi^{\bsv}_{\tau}\right)_i}
	\leq &\, \sum_{\ell=1}^{n} |v_{i,\ell}| 
			\normc[W^{1,\infty}(\hI)]{T_{\ell}-\widetilde{T}_{\ell,\delta}}
	\leq \sum_{\ell=2}^{n} |v_{i,\ell}| \delta
	\leq {\tau} C_{\mathrm s}.
\end{align*}

Finally, the fact that the hidden layer weights and biases of $\Phi^{\bsv}_{\tau}$
are independent of $(v_i)_{i=1}^{N_{\bsv}}$
can be seen directly from its definition. 
For each $i=1,\ldots,N_{\bsv}$,
the Lagrange interpolant of $v_i$ of degree $n$ in the \CC points $( \hat{x}^{\cc,n}_{j} )_{j=0}^n$
equals $v_i$
by Proposition \ref{prop:cconed} Item \eqref{item:ccprojoned},
which means that \ref{prop:cconed} Item \eqref{item:ccifftoned} can be used
to compute the \Cheb coefficients of $v_i$.
\end{proof}

Preparing for the approximation of univariate, piecewise polynomial functions, 
we next derive as a corollary from Proposition \ref{prop:relupolynom}
a NN approximation of multiple 
polynomials on a bounded interval $I:=[a,b]$ for arbitrary $-\infty<a<b<\infty$, 
such that on $(-\infty,a)$ the NN realization 
exactly equals the values of the polynomials in the left endpoint $a$,
and on $(b,\infty)$ equals the values of the polynomials in the right endpoint $b$.
Also, we use the material from Section \ref{sec:prelim},
in particular Lemma \ref{lem:chebclinftyoned} bounding \Cheb coefficients
and the inverse inequalities from Section \ref{sec:inverseineq},
to transform the right-hand side of the error bound in Proposition \ref{prop:relupolynom}.
It is stated in terms of \Cheb coefficients, and we rewrite it
into a bound in terms of Sobolev norms of the approximated polynomial,
taking into account the natural scaling of such norms 
in terms of the interval length $h$.

In the statement of the corollary,
the minimum over $v$ in Equation \eqref{eq:relupolynomcorerr}
expresses the fact that the error is not affected by adding a polynomial of degree $1$ 
to the polynomials we want to approximate,
because NNs with ReLU activation can emulate 
continuous, piecewise linear functions exactly.
\begin{corollary}[{\cite[Corollary 6.2.4]{JOdiss}}]
\label{cor:relupolynomH10}
There exists a constant $C>0$ such that for all $n,{N_{\bsv}}\in\N$ 
and ${N_{\bsv}}$ polynomials 
$v_i = \sum_{\ell=0}^n v_{i,\ell} T_{\ell} \in \bbP_n$ for $i=1,\ldots,{N_{\bsv}}$,
the following holds:

For $-\infty<a<b<\infty$ let $I:=[a,b]$ and denote $h=b-a$.
There exist ReLU NNs $\{\Phi^{\bsv,I}_{\eps}\}_{\eps\in(0,1)}$ 
with input dimension one and output dimension ${N_{\bsv}}$ 
which satisfy for all $i=1,\ldots,{N_{\bsv}}$, 
$1\leq r, r' \leq\infty$ and $t=0,1$
\begin{align}
\realiz(\Phi^{\bsv,I}_{\eps})_i |_{(-\infty,a]}
	= &\, v_i(a),
	\label{eq:relupolynomcorleft}
	\\
\realiz(\Phi^{\bsv,I}_{\eps})_i |_{[b,\infty)}
	= &\, v_i(b),
	\label{eq:relupolynomcorright}
	\\
(2/h)^{1-t} \snormc[W^{t,r}(I)]{v_i-\realiz(\Phi^{\bsv,I}_{\eps})_i}
	\leq &\, \tfrac12 \eps (2/h)^{1+1/r'-1/r} 
		\min_{v\in\bbP_n: \atop v'' = {v_i}''} \normc[L^{r'}(I)]{v}
	\nonumber\\
	\leq &\, \eps (2/h)^{1/r'-1/r} \snormc[W^{1,r'}(I)]{v_i}	
	,
	\label{eq:relupolynomcorerr}
\end{align}
\begin{align}
\nonumber
\depth(\Phi^{\bsv,I}_{\eps})
	\leq &\, C_{\depth}(1+\log_2(n)) \log_2\big({1}/\eps\big) 
		+ \tfrac{2}{3} C_{\depth} (\log_2(n))^3 + C (1+\log_2(n))^2
		,
	\\\nonumber
\size(\Phi^{\bsv,I}_{\eps})
	\leq &\, 4C_{\size} n \log_2\big({1}/\eps\big) + 40C_{\size} n\log_2(n)
		\\\nonumber
	&\, 
		+ 6C_{\depth} (1+\log_2(n))^2 \log_2\big({1}/\eps\big) 
		+ C n {N_{\bsv}}
	,
	\\\nonumber
\sizefirst(\Phi^{\bsv,I}_{\eps})
	\leq &\, 4,
	\\\nonumber
\sizelast(\Phi^{\bsv,I}_{\eps})
	\leq &\, 2(n+1){N_{\bsv}}.
\end{align}
The weights and biases in the hidden layers are independent of $(v_i)_{i=1}^{N_{\bsv}}$.
The weights and biases in the output layer 
are the \Cheb coefficients,
which are linear combinations of the function values in the \CC points in $I$.
\end{corollary}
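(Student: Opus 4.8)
The plan is to reduce everything to the reference interval $\hI=[-1,1]$ and the already-established Proposition \ref{prop:relupolynom}, obtaining the constant extension and the scaled Sobolev bound by two post-processing steps: an affine pullback together with a clipping of the input, and a change of variables combined with the inverse estimates of Section \ref{sec:inverseineq}. Writing $F:\hI\to I$, $F(y)=\tfrac{a+b}{2}+\tfrac{b-a}{2}y$, I regard each $v_i=\sum_{\ell=0}^n v_{i,\ell}T_\ell$ as the polynomial on $I$ obtained by transplanting the reference expansion through $F^{-1}$, consistently with the convention $\ccp{p}{I}{v}=(\hccp{p}{v\circ F})\circ F^{-1}$; in particular its \Cheb coefficients are linear combinations of the values in the \CC points in $I$. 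First I would invoke Proposition \ref{prop:relupolynom} with a tolerance $\tau$ fixed below, producing $\Phi^{\bsv}_{\tau}$ with $\normc[W^{1,\infty}(\hI)]{(v_i\circ F)-\realiz(\Phi^{\bsv}_{\tau})_i}\leq\tau\sum_{\ell\geq2}|v_{i,\ell}|$ and exactness at $\pm1$.

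For the extension property I would precompose $\Phi^{\bsv}_{\tau}$ with a ReLU network $\Phi^{\mathrm{pre}}$ of depth $2$ realizing
\begin{align*}
x\mapsto -1+\varrho\!\left(\tfrac{2}{h}(x-a)\right)-\varrho\!\left(\tfrac{2}{h}(x-b)\right),
\end{align*}
which equals $F^{-1}(x)$ for $x\in I$, the constant $-1$ for $x\leq a$ and the constant $1$ for $x\geq b$. Setting $\Phi^{\bsv,I}_{\eps}:=\Phi^{\bsv}_{\tau}\sconc\Phi^{\mathrm{pre}}$, inside $I$ the realization emulates $v_i$, while for $x\leq a$ resp. $x\geq b$ the exactness at $\pm1$ from Proposition \ref{prop:relupolynom} delivers the constant values $v_i(a)$ resp. $v_i(b)$, proving \eqref{eq:relupolynomcorleft}--\eqref{eq:relupolynomcorright}. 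Since $\Phi^{\mathrm{pre}}$ has depth $2$ and $O(1)$ nonzero weights, Proposition \ref{prop:conc} shows that $\depth,\size,\sizefirst,\sizelast$ of $\Phi^{\bsv,I}_{\eps}$ exceed those of $\Phi^{\bsv}_{\tau}$ only by constants; in particular $\sizefirst(\Phi^{\bsv,I}_{\eps})\leq 4$ and $\sizelast(\Phi^{\bsv,I}_{\eps})\leq 2(n+1){N_{\bsv}}$.

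The crux is the error bound \eqref{eq:relupolynomcorerr}. Writing $e_i:=(v_i\circ F)-\realiz(\Phi^{\bsv}_{\tau})_i$ on $\hI$, a change of variables gives, uniformly for $t\in\{0,1\}$,
\begin{align*}
(2/h)^{1-t}\snormc[W^{t,r}(I)]{v_i-\realiz(\Phi^{\bsv,I}_{\eps})_i}
	=(2/h)^{1-1/r}\normc[L^r(\hI)]{\der{t}e_i}
	\leq(2/h)^{1-1/r}2^{1/r}\tau\sum_{\ell\geq2}|v_{i,\ell}|,
\end{align*}
since $\normc[W^{1,\infty}(\hI)]{e_i}$ bounds both derivatives. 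Lemma \ref{lem:chebclinftyoned} turns the coefficient sum into $n^4\min_{\hat w:\der{2}\hat w=\der{2}(v_i\circ F)}\normc[L^\infty(\hI)]{\hat w}$, the inverse inequality \eqref{eq:invineqoned} (with $r=\infty$, $q=r'$) passes from $L^\infty(\hI)$ to $L^{r'}(\hI)$ at the cost of $((1+r')n^2)^{1/r'}\leq 2n^2$, and transplanting back through $F$ (noting $\der{2}\hat w=\der{2}(v_i\circ F)$ is equivalent to $\der{2}v=\der{2}v_i$ on $I$, as second derivatives scale by $(2/h)^2$) converts $\min\normc[L^{r'}(\hI)]{\cdot}$ into $(2/h)^{1/r'}\min_{v\in\bbP_n:\der{2}v=\der{2}v_i}\normc[L^{r'}(I)]{v}$ and aligns the power of $2/h$ to $(2/h)^{1+1/r'-1/r}$. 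The product of all $n$- and $(r,r')$-dependent factors is bounded by $4n^6$ uniformly in $r,r'\in[1,\infty]$, so the choice $\tau=\eps/(8n^6)$ yields the first inequality of \eqref{eq:relupolynomcorerr}; the second follows from the Poincaré estimate $\min_{v}\normc[L^{r'}(I)]{v}\leq\normc[L^{r'}(I)]{v_i-v_i(a)}\leq h\,\snormc[W^{1,r'}(I)]{v_i}$. Finally, $\log_2(1/\tau)=\log_2(1/\eps)+6\log_2 n+3$ is substituted into the bounds of Proposition \ref{prop:relupolynom}, the extra $6\log_2 n$ raising the coefficient of $n\log_2 n$ in the size bound from $16C_{\size}$ to $40C_{\size}$ and contributing only lower-order terms elsewhere. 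I expect the main obstacle to be precisely this bookkeeping: keeping the interval-scaling powers of $2/h$, the $r,r'$-dependence, and the polynomial-in-$n$ factors from Lemma \ref{lem:chebclinftyoned} and \eqref{eq:invineqoned} simultaneously under control so that a single, $r,r'$-independent choice of $\tau$ delivers \eqref{eq:relupolynomcorerr} without degrading the stated depth and size estimates.
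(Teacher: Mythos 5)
Your proposal is correct and follows essentially the same route as the paper's proof: the same depth-$2$ ReLU clipping map (the paper's $\Phi^P$) sparsely concatenated with the reference-interval network of Proposition \ref{prop:relupolynom}, the same chain Lemma \ref{lem:chebclinftyoned} $\to$ inverse inequality \eqref{eq:invineqoned} $\to$ affine rescaling $\to$ Poincar\'e, and the same $r'$-independent choice $\tau\propto\eps/n^6$. The only differences are cosmetic (you perform the H\"older/change-of-variables step on $\hI$ rather than on $I$, and make the constant $C_0\le 2$ explicit).
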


\begin{proof}
The proof consists of 2 steps.
In Step 1, we introduce a piecewise linear map which enables us to use
Proposition \ref{prop:relupolynom} on the reference interval $[-1,1]$.
We define it such that
Equations \eqref{eq:relupolynomcorleft} and \eqref{eq:relupolynomcorright}
will be satisfied.
In Step 2, we define the network $\Phi^{\bsv,I}_{\eps}$ 
and estimate its error, depth and size.

\textbf{Step 1.}
Let $P: \R\to[-1,1]: x\mapsto \min\{ \max\{ \tfrac{2}{b-a}(x-\tfrac{a+b}{2}), -1 \}, 1 \}$,
which is the continuous, piecewise linear function which is affine on $[a,b]$
and satisfies $P(x) = -1$ for all $x\in(-\infty,a]$ and $P(x) = 1$ for all $x\in[b,\infty)$.
We denote the inverse of its restriction to $[a,b]$ 
by $P^{-1}: [-1,1]\to[a,b]: x\mapsto \tfrac{b-a}{2}x+\tfrac{a+b}{2}$.

For all $i\in\{1,\ldots,{N_{\bsv}}\}$, define $\hat{v}_i := v_i \circ P^{-1}$ on $[-1,1]$.
We denote the \Cheb expansion of $\hat{v}_i$ by $\hat{v}_i=\sum_{\ell=0}^{n} \hat{v}_{i,\ell} T_{\ell}$,
defining $\hat{v}_{i,\ell} = 0$ if $\ell>n$. 
With 
Lemma \ref{lem:chebclinftyoned}
it follows that 
\begin{align*}
\sum_{\ell\geq 2} |\hat{v}_{i,\ell}| 
	\leq n^4 \min_{\hat{v}\in\bbP_n: \atop \hat{v}'' = {\hat{v}_i}''} \normc[L^\infty(\hI)]{\hat{v}}
	.
\end{align*}
With the inverse inequality \eqref{eq:invineqoned}
it follows that for all $1\leq r'<\infty$ and
$\hat{v}\in\bbP_n$, 
with $v := \hat{v}\circ P$:
\begin{align}
\label{eq:inverseLinftyLp}
\normc[L^\infty(\hI)]{\hat{v}}
	\leq &\, \left((r'+1)n^2\right)^{1/r'} \normc[L^{r'}(\hI)]{\hat{v}}
	\leq C_0 n^2 \normc[L^{r'}(\hI)]{\hat{v}}
	\leq C_0 n^2 (2/h)^{1/r'} \normc[L^{r'}(I)]{v}
	,
\end{align}
for an absolute constant $C_0$ independent of $n$, $r'$, $I$ and $\hat{v}$. 
We used that the function $(x+1)^{1/x} = \exp(\tfrac{1}{x}\log(1+x))$ equals $2$ in $x=1$ 
and converges to $1$ for $x\to\infty$, hence by continuity it is bounded on $[1,\infty)$. 
It gives
\begin{align*}
\sum_{\ell\geq 2} |\hat{v}_{i,\ell}| 
	\leq &\, C_0 n^6 (2/h)^{1/r'} \min_{v\in\bbP_n: \atop v'' = {v_i}''} \normc[L^{r'}(I)]{v}.
\end{align*}

\textbf{Step 2.}
In this step, we define the network $\Phi^{\bsv,I}_{\eps}$.

Let $\Phi^P = (([1,1]^\top,[-a,-b]^\top),([2/h,-2/h],[-1]))$ 
be a NN of depth $2$ and size at most $7$ which exactly emulates $P$.
Let $\hat{\bsv} = (\hat{v}_i)_{i=1}^{N_{\bsv}}$ be as defined in Step 1,
and let $\Phi^{\hat{\bsv}}_{\tau}$ for ${\tau} = \eps/(4C_0 n^6)$
be the network from Proposition \ref{prop:relupolynom}.
Then, we define 
\begin{align*}
\Phi^{\bsv,I}_{\eps} := \Phi^{\hat{\bsv}}_{\tau} \sconc \Phi^P
.
\end{align*}
Equations \eqref{eq:relupolynomcorleft} and \eqref{eq:relupolynomcorright}
follow from the definition of $P$ and exactness in $\pm1$ 
of the network from Proposition \ref{prop:relupolynom}.
By the result of Step 1 of this proof, 
for $t = 0,1$ 
the error is bounded as follows:
\begin{align*}
&\,
(\tfrac2h)^{1-t}
	\snormc[W^{t,\infty}(I)]{v_i-\realiz(\Phi^{\bsv,I}_{\eps})_i}
	\\
	\leq &\, \tfrac{2}{h} \normc[W^{1,\infty}(\hI)]{\hat{v}_i 
		- \realiz(\Phi^{\hat{\bsv}}_{\tau} )_i}
	\leq 
		\tfrac{2}{h}{\tau} \sum_{\ell\geq 2} |\hat{v}_{i,\ell}|
	\leq
		\tfrac{1}{2h} \eps (2/h)^{1/r'} \min_{v\in\bbP_n: \atop v'' = {v_i}''} \normc[L^{r'}(I)]{v}
	.
\end{align*}
Using exactness of $\Phi^{\bsv,I}_{\eps}$ in the endpoints of the interval,
H\"older's inequality implies that for all $i=1,\ldots,{N_{\bsv}}$, 
$1\leq r<\infty$
and $t=0,1$
\begin{align*}
(\tfrac2h)^{1-t}
		\snormc[W^{t,r}(I)]{v_i-\realiz(\Phi^{\bsv,I}_{\eps})_i}
	\leq &\, h^{1/r} (\tfrac2h)^{1-t}
	\snormc[W^{t,\infty}(I)]{v_i-\realiz(\Phi^{\bsv,I}_{\eps})_i}
	\\
	\leq &\, 2(h/2)^{1/r} \tfrac{1}{2h} \eps (2/h)^{1/r'} 
		\min_{v\in\bbP_n: \atop v'' = {v_i}''} \normc[L^{r'}(I)]{v} 
	\\
	= &\, \tfrac12 \eps (2/h)^{1+1/r'-1/r} \min_{v\in\bbP_n: \atop v'' = {v_i}''} \normc[L^{r'}(I)]{v} 
	,
\end{align*}
which is the first inequality in Equation \eqref{eq:relupolynomcorerr}.
For all $i=1,\ldots,{N_{\bsv}}$, we take $v := v_i - v_i(a) \in\bbP_n$ 
in the minimum in Equation \eqref{eq:relupolynomcorerr}.
It satisfies $v' = v_i'$, $v'' = v_i''$ and $v(a) = 0$.
Poincar\'e's inequality 
gives
$\normc[L^{r'}(I)]{v} \leq h \snormc[W^{1,r'}(I)]{v} = h \snormc[W^{1,r'}(I)]{v_i}$,
which gives the second inequality in Equation \eqref{eq:relupolynomcorerr}.

It remains to estimate the network depth and size.
We use Proposition \ref{prop:relupolynom} with 
$C_{\mathrm n} \leq (n+1){N_{\bsv}}$ and ${\tau} = \eps/(4C_0 n^6)$,
which gives
\begin{align*}
\depth(\Phi^{\bsv,I}_{\eps})
	\leq &\, \depth(\Phi^{\hat{\bsv}}_{\tau}) + \depth(\Phi^P)
	\\
	\leq &\, \left( C_{\depth}(1+\log_2(n))\log_2({1/{\tau}})
		+ \tfrac{2}{3} C_{\depth} (\log_2(n))^3 + C (1+\log_2(n))^2 \right)
		+2
	\\
	\leq &\, C_{\depth}(1+\log_2(n)) \log_2({1}/\eps) 
		+ \tfrac{2}{3} C_{\depth} (\log_2(n))^3 + C (1+\log_2(n))^2
		,
	\\
\size(\Phi^{\bsv,I}_{\eps})
	\leq &\, \size(\Phi^{\hat{\bsv}}_{\tau}) + \sizefirst(\Phi^{\hat{\bsv}}_{\tau}) 
		+ \sizelast(\Phi^P) + \size(\Phi^P)
	\\
	\leq &\, \big( 4C_{\size} n \log_2({1/{\tau}}) + 16C_{\size} n\log_2(n)
		+ 6C_{\depth} (1+\log_2(n))^2 \log_2({1/{\tau}}) 
	\\
	&\, + C (n+C_{\mathrm n}) \big)
		+ \max\{C_{\mathrm n}, 4\log_2(n)+4\}
		+ 3 + 7
	\\
	\leq &\, 4C_{\size} n \log_2({1}/\eps) + 40C_{\size} n\log_2(n)
		+ 6C_{\depth} (1+\log_2(n))^2 \log_2({1}/\eps) 
		+ C n {N_{\bsv}}
		,
	\\
\sizefirst(\Phi^{\bsv,I}_{\eps})
	\leq &\, \sizefirst(\Phi^P)
	\leq 4,
	\\
\sizelast(\Phi^{\bsv,I}_{\eps})
	\leq &\, \sizelast(\Phi^{\hat{\bsv}}_{\tau})
	\leq 2(n+1){N_{\bsv}}
	.
\end{align*}
The weights of $\Phi^P$ are independent of $(v_i)_{i=1}^{N_\bsv}$.
The remaining statements about the NN weights 
follow directly from Proposition \ref{prop:relupolynom}.
\end{proof}
%%%%%%%%%%%%%%%%%%%%%%%%%%%%%%%%%%%%%%%%%%%%%%%%%%%%%%%%%%%%%%%%%%%%%
\subsection{ReLU emulation of piecewise polynomial functions}
\label{sec:relupwpolynom}
%%%%%%%%%%%%%%%%%%%%%%%%%%%%%%%%%%%%%%%%%%%%%%%%%%%%%%%%%%%%%%%%%%%%%

Using Corollary \ref{cor:relupolynomH10},
the dependence of the network size on the polynomial degree in \cite[Proposition 5.1]{OPS2020} 
is improved in Proposition \ref{prop:relupwpolynom} below.

Based on Proposition \ref{prop:relupwpolynom}, 
improvements of some results in \cite[Section 5]{OPS2020} directly follow.
We present them in Section \ref{sec:FESpaces}
without proof,
because the proofs are completely analogous to those in \cite{OPS2020}.
Other results from \cite[Section 5]{OPS2020} do not improve,
because for those results it holds that $p \propto (1+\log(1/\eps))$,
which means that $p^2 + p \log(1/\eps)$ and $p\log(p) + p\log(1/\eps)$ are of the same order.

Below, we derive a result on the ReLU DNN approximation 
of continuous, piecewise polynomial functions on an arbitrary bounded interval $I$,
analogous to \cite[Proposition 5.1]{OPS2020}.
For continuous, piecewise polynomial functions
we use the notation from Section \ref{sec:CPwLoned}.
As for the ReLU emulation of polynomials in the previous section,
weights and biases in the hidden layers 
are independent of the approximated function.
For weights and biases in the output layer,
explicit numerical expressions based on the inverse fast Fourier transform
are provided in Remark \ref{rem:weightformulas}.
A quick extension of the error bounds to fractional order Sobolev spaces
is given in Corollary \ref{cor:fracsobrate}.

\begin{proposition}[{\cite[Proposition 6.3.1]{JOdiss}}]
\label{prop:relupwpolynom}
There exists a constant $C>0$ such that the following holds:
For $-\infty < a < b < \infty$, let $I := (a,b)$.
For all $\bm p = (p_i)_{i\in\{1,\ldots,N\}} \in \N^N$,
all partitions $\Tcal$ of $I$ into $N$ open, disjoint, connected subintervals 
$I_i = (x_{i-1},x_i)$ of length $h_i = x_i - x_{i-1}$, $i=1,\ldots,N$,
with $h  =  \max_{i\in\{1,\ldots,N\}} h_i$
and for all $v\in S_{\bm p} (I,\Tcal)$, for $0<\eps< 1$ there exist ReLU NNs
$\{\Phi^{v,\Tcal,\bm p}_{\eps}\}_{\eps\in(0,1)}$ 
such that for all $1\leq r, r' \leq \infty$ holds
\begin{align*}
(2/h_i)^{1-t} \snormc[W^{t,r}(I_i)]{v-\realiz\left(\Phi^{v,\Tcal,\bm p}_{\eps}\right)}
	\leq &\, 
		\eps \tfrac12 (2/h_i)^{1+1/r'-1/r} \min_{u\in\bbP_{p_i}: \atop u'' = v''|_{I_i}} \normc[L^{r'}(I_i)]{u},
	\\
	&\, \qquad \text{ for all } i=1,\ldots,N \text{ and } t = 0,1,
	\\
\tfrac1h \normc[L^{r}(I)]{v-\realiz\left(\Phi^{v,\Tcal,\bm p}_{\eps}\right)}
	\leq &\,
	\snormc[W^{1,r}(I)]{v-\realiz\left(\Phi^{v,\Tcal,\bm p}_{\eps}\right)}
	\leq 
		\tfrac12 \eps \snormc[W^{1,r}(I)]{v}	
	,
\end{align*}
\begin{align*}
\depth\left(\Phi^{v,\Tcal,\bm p}_{\eps}\right)
	\leq &\, \max_{i=1}^N \left( C_{\depth}(1+\log_2(p_i)) \log_2\big(1/\eps\big)
		+ \tfrac{2}{3} C_{\depth} (\log_2(p_i))^3 + C (1+\log_2(p_i))^2 \right)
	,
	\\
\size\left(\Phi^{v,\Tcal,\bm p}_{\eps}\right)
	\leq &\, \sum_{i=1}^N p_i \big( 4C_{\size} \log_2\big(1/\eps\big) 
		+ 40C_{\size} \log_2(p_i) \big)
	\\ 
	&\, + \sum_{i=1}^N 6C_{\depth} (1+\log_2(p_i))^2 \log_2\big(1/\eps\big)
		+ C \sum_{i=1}^N p_i 
	\\ 
	&\, + 2N \max_{i=1}^N \Big( C_{\depth}(1+\log_2(p_i)) \log_2\big(1/\eps\big)
			+ \tfrac{2}{3} C_{\depth} (\log_2(p_i))^3 
	\\ 
	&\, 	+ C (1+\log_2(p_i))^2 \Big)  
	,
	\\
\sizefirst\left(\Phi^{v,\Tcal,\bm p}_{\eps}\right)
	\leq &\, 2N+2
	,
	\\
\sizelast\left(\Phi^{v,\Tcal,\bm p}_{\eps}\right) 
	\leq &\, 3N+1 + 2\sum_{i=1}^N p_i
	.
\end{align*}

In addition, it holds that 
$\realiz\left( \Phi^{v,\Tcal,\bm p}_{\eps} \right)(x_j)=v(x_j)$ 
for all $j\in\{0,\ldots,N\}$.
The weights and biases in the hidden layers are independent of $v$.
The weights and biases in the output layer 
are linear combinations of the function values in the \CC points 
in $I_i$ for $i=1,\ldots,N$.
\end{proposition}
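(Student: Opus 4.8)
The plan is to reduce the piecewise case to the single-element result of Corollary \ref{cor:relupolynomH10} by a telescoping decomposition of $v$ that localizes the emulation error to individual elements. Writing $v_i := v|_{I_i} \in \bbP_{p_i}$ and using continuity of $v$ at the interior nodes, I would first express $v$ as $v = v(a) + \sum_{i=1}^N \tilde w_i$, where $\tilde w_i$ vanishes on $(-\infty,x_{i-1}]$, equals $v_i - v_i(x_{i-1})$ on $I_i$, and is the constant $v_i(x_i) - v_i(x_{i-1})$ on $[x_i,\infty)$. To verify this on a fixed element $I_j$, note that for $i\neq j$ the term $\tilde w_i$ is exactly constant there; the telescoping sum of the constants coming from $i<j$ collapses to $v(x_{j-1}) - v(a)$, and adding the $i=j$ term $v - v(x_{j-1})$ yields $v - v(a)$, so that $v(a)+\sum_i\tilde w_i = v$ on $I_j$. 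The same computation, evaluated at a node, shows the representation is exact at every $x_j$.

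Next I would realize each $\tilde w_i$ approximately. For each element I apply Corollary \ref{cor:relupolynomH10} on $I_i=[x_{i-1},x_i]$ with $N_{\bsv}=1$ and the polynomial $v_i$ of degree $p_i$, obtaining a network $\Phi^{v_i,I_i}_{\eps}$ whose realization equals $v_i(x_{i-1})$ on $(-\infty,x_{i-1}]$, equals $v_i(x_i)$ on $[x_i,\infty)$, and approximates $v_i$ on $I_i$ with the stated Sobolev error. Subtracting the constant $v_i(x_{i-1})$, which I absorb into the output-layer bias, converts this into an emulation of $\tilde w_i$ that is \emph{exactly} $0$ to the left of $x_{i-1}$ and exactly $v_i(x_i)-v_i(x_{i-1})$ to the right of $x_i$, by the endpoint exactness \eqref{eq:relupolynomcorleft}--\eqref{eq:relupolynomcorright}. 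I then assemble the full network by placing these element networks in parallel on the common scalar input $x$ (Proposition \ref{prop:parall}), after equalizing their depths to $\max_i\depth(\Phi^{v_i,I_i}_{\eps})$ via sparse concatenation with identity networks $\Phi^{\mathrm{Id}}_{1,\ell_i}$ of suitable length (Propositions \ref{prop:conc} and \ref{prop:Id}), and finally summing the shifted outputs together with the bias $v(a)$ in a single output layer.

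The error analysis is then immediate from the decomposition: since the element networks are exactly constant off their own intervals and exact at the endpoints, on each $I_j$ only the $j$-th term contributes, so $v-\realiz(\Phi^{v,\Tcal,\bm p}_{\eps})$ restricted to $I_j$ equals $v_j-\realiz(\Phi^{v_j,I_j}_{\eps})$, and the per-element $W^{t,r}(I_j)$ bounds transfer verbatim from Corollary \ref{cor:relupolynomH10}; the node identities $\realiz(\Phi^{v,\Tcal,\bm p}_{\eps})(x_j)=v(x_j)$ follow from the same exactness. For the two global bounds I would sum the elementwise estimates and use that $w:=v-\realiz(\Phi^{v,\Tcal,\bm p}_{\eps})$ vanishes at every node, which on each $I_i$ supplies the Poincaré inequality $\normc[L^r(I_i)]{w}\le h_i\snormc[W^{1,r}(I_i)]{w}$ (hence the first global line) and lets the elementwise $\snormc[W^{1,r}(I_i)]{w}\le\tfrac12\eps\,\snormc[W^{1,r}(I_i)]{v}$ bounds combine into $\snormc[W^{1,r}(I)]{w}\le\tfrac12\eps\,\snormc[W^{1,r}(I)]{v}$.

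I expect the arithmetic of the complexity bounds, not the approximation content, to be the main obstacle. Two points need care. First, the depths $\depth(\Phi^{v_i,I_i}_{\eps})$ grow like $(\log_2 p_i)^3 + (1+\log_2 p_i)\log_2(1/\eps)$ and differ across elements, so the identity padding must be arranged so the extra size is only $O\big(N\max_i\depth(\cdot)\big)$, producing the additive $2N\max_i(\cdots)$ term rather than a multiplicative blow-up; here the sharp $\sizefirst/\sizelast$ bookkeeping of Propositions \ref{prop:parall}, \ref{prop:conc} and \ref{prop:Id} is essential. Second, a naive parallelization would give $\sizefirst\le 4N$, whereas the claimed $2N+2$ requires sharing the breakpoint hinges $\{\varrho(x-x_j)\}_{j=0}^N$ across all elements in the input layer (each clipping map reuses the two hinges at $x_{i-1},x_i$, which are shared by neighbouring elements), and the bound $\sizelast\le 3N+1+2\sum_i p_i$ comes from collecting the at most $2(p_i+1)$ output weights per element (the \Cheb coefficients) together with the $N$ constant shifts and the single global bias. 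Keeping these counts sharp while routing the shared first-layer hinges correctly is the delicate part of the construction.
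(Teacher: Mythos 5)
Your proposal is correct in substance and follows the same overall architecture as the paper's proof --- apply Corollary \ref{cor:relupolynomH10} once per element, equalize depths with identity networks, parallelize, and sum in a final affine layer --- but it uses a genuinely different localizing decomposition. The paper subtracts the continuous, piecewise linear interpolant $\bar v\in S_{\bm 1}(I,\Tcal)$ (emulated exactly by a small ReLU NN) and applies the corollary to $v-\bar v$ on each element; since $v-\bar v$ vanishes at both endpoints of $I_i$, each local network is identically zero off its element. Your telescoping decomposition $v=v(a)+\sum_i\tilde w_i$ instead makes each local network exactly zero to the left and exactly constant to the right of its element, which serves the same purpose. Both localizations make the restriction of $v-\realiz(\Phi^{v,\Tcal,\bm p}_{\eps})$ to $I_j$ coincide with a single-element error, and yours is arguably cleaner on the error side: you avoid the triangle inequality through $\bar v$ (which forces the paper to prove $\snormc[W^{1,r}(I_i)]{\bar v}\le\snormc[W^{1,r}(I_i)]{v}$ via Jensen and to take elementwise tolerance $\eps/4$), and the global $W^{1,r}$ bound follows by summing $r$-th powers over elements.

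Two bookkeeping points need attention. First, with elementwise tolerance $\eps$, Corollary \ref{cor:relupolynomH10} only yields $\snormc[W^{1,r}(I_i)]{v-\realiz(\cdot)}\le\eps\,\snormc[W^{1,r}(I_i)]{v}$, not the $\tfrac12\eps$ you assert when combining the elementwise bounds; you should invoke the corollary with tolerance $\eps/2$, after which the first displayed bound still holds with room to spare and the global bound acquires the claimed constant. Second, your hinge-sharing fix for $\sizefirst$ does work (the $N+1$ distinct hinges $\varrho(x-x_j)$ carry two nonzero parameters each, giving $2N+2$), but it requires assembling the shared first layer by hand, outside the block-diagonal parallelization of Proposition \ref{prop:parall}; the paper sidesteps this by placing the identity padding at the \emph{input} end of each branch, as in $\Phi^{v-\bar v,I_i}_{\eps_i}\sconc\Phi^{\Id}_{1,\ell_i}$, so that the first layer of the assembled network consists only of identity first layers of size $2$ each. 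Either route closes the gap; the remainder of your complexity accounting matches the paper's.
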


\begin{proof}
As in the proof of \cite[Proposition 5.1]{OPS2020},
we denote by $\bar{v}\in S_{\bm 1}(I,\Tcal)$ the continuous, piecewise linear interpolant of $v$,
which is exact in the nodes of the partition, i.e. $\bar{v}(x_i) = v(x_i)$ for all $i=0,\ldots,N$.
It can be emulated exactly by a NN of depth $2$ satisfying
$\size(\Phi^{\bar{v}}) \leq 3N+1$, 
$\sizefirst(\Phi^{\bar{v}}) \leq 2N$ and
$\sizelast(\Phi^{\bar{v}}) \leq N+1$,
see e.g. \cite[Lemma 3.1]{OPS2020}.

On each interval $I_i$, $i=1,\ldots,N$, 
we approximate the difference $v-\bar{v}\in S_{\bm p}(I,\Tcal)$ 
with $\Phi^{v-\bar{v},I_i}_{\eps_i}$ from Corollary \ref{cor:relupolynomH10}
for $\eps_i = \eps/4$, 
with ${N_{\bsv}}=1$ in the notation of the corollary.
It holds for all $1 \leq r, r' \leq \infty$ that
\begin{align}
\nonumber
\realiz(\Phi^{v-\bar{v},I_i}_{\eps_i}) |_{\R\setminus I_i}
	= &\, 0,
	\\\nonumber
(2/h_i)^{1-t} \snormc[W^{t,r}(I_i)]{(v-\bar{v})-\realiz(\Phi^{v-\bar{v},I_i}_{\eps_i})}
	\leq &\, \tfrac12 \eps_i (2/h_i)^{1+1/r'-1/r} 
		\min_{u\in\bbP_{p_i}: \atop u'' = (v-\bar{v})''|_{I_i}} \normc[L^{r'}(I_i)]{u}
	\\
	= &\, \tfrac12 \eps_i (2/h_i)^{1+1/r'-1/r} 
		\min_{u\in\bbP_{p_i}: \atop u'' = v''|_{I_i}} \normc[L^{r'}(I_i)]{u}
	,
	\label{eq:relupwpolynompferr}
\end{align}
\begin{align}
\nonumber
\depth(\Phi^{v-\bar{v},I_i}_{\eps_i})
	\leq &\, C_{\depth}(1+\log_2(p_i)) \log_2\big(1/\eps_i\big) 
		+ \tfrac{2}{3} C_{\depth} (\log_2(p_i))^3 + C (1+\log_2(p_i))^2,
	\\\nonumber
\size(\Phi^{v-\bar{v},I_i}_{\eps_i})
	\leq &\, 4C_{\size} p_i \log_2\big(1/\eps_i\big) + 40C_{\size} p_i\log_2(p_i)
		+ 6C_{\depth} (1+\log_2(p_i))^2 \log_2\big(1/\eps_i\big) 
	\\\nonumber
	&\, 
		+ C p_i,
	\\\nonumber
\sizefirst(\Phi^{v-\bar{v},I_i}_{\eps_i})
	\leq &\, 4,
	\\\nonumber
\sizelast(\Phi^{v-\bar{v},I_i}_{\eps_i})
	\leq &\, 2(p_i+1).
\end{align}

Let $(\ell_i)_{i=0}^{N} \in \N^{N+1}$ be such that 
$\depth(\Phi^{\bar{v}}) +\ell_0 
	= \max\{ \depth(\Phi^{\bar{v}}), \max_{i=1}^N \depth(\Phi^{v-\bar{v},I_i}_{\eps_i}) \} + 1
	= \depth(\Phi^{v-\bar{v},I_i}_{\eps_i}) +\ell_i$ for all $i=1,\ldots,N$,
so that $\min_{i=0}^N \ell_i = 1$ and $\sum_{i=0}^N \ell_i \leq 1+ N \max_{i=0}^N \ell_i$.
We then define $\Phi^{v,\Tcal,\bm p}_{\eps}$ as follows,
with $\bsone := (1,\ldots,1)^\top\in\R^{N+1}$ 
and the concatenation from Definition \ref{def:pvconc}
\begin{align*}
\Phi^{v,\Tcal,\bm p}_{\eps}
	:=  
	((\bsone^\top,0)) \bullet \Parallel{
		\Phi^{\bar{v}} \sconc \Phi^{\Id}_{1,\ell_0}, 
		\Phi^{v-\bar{v},I_1}_{\eps_1} \sconc \Phi^{\Id}_{1,\ell_1}, \ldots, 
		\Phi^{v-\bar{v},I_N}_{\eps_N} \sconc \Phi^{\Id}_{1,\ell_N} }.
\end{align*}

The realization satisfies
\begin{align*}
\realiz(\Phi^{v,\Tcal,\bm p}_{\eps})
	= &\, \bar{v} + \sum_{i=1}^N \realiz(\Phi^{v-\bar{v},I_i}_{\eps_i}).
\end{align*}
The first error bound in the proposition follows from Equation \eqref{eq:relupwpolynompferr}.
For the second error estimate, we use that 
$\snorm[W^{1,r}(I_i)]{\bar{v}} \leq \snorm[W^{1,r}(I_i)]{v}$,
which we next show, using Jensen's inequality in the second to last step:
\begin{align*}
\snorm[W^{1,r}(I_i)]{\bar{v}}^{r}
	= &\, \int_{I_i} \snorm{\bar{v}'}^{r} \dd x
	= \int_{I_i} \snormc{ \tfrac{1}{h_i} \int_{I_i} v'(y) \dd y }^{r} \dd x
	= h_i \snormc{ \tfrac{1}{h_i} \int_{I_i} v' \dd y }^{r}
	\\
	\leq &\, h_i \left( \tfrac{1}{h_i} \int_{I_i} \snorm{v'} \dd y \right)^{r}
	\leq h_i \tfrac{1}{h_i} \int_{I_i} \snorm{v'}^{r} \dd y
	= \snorm[W^{1,r}(I_i)]{v}^{r}
	.
\end{align*}
Using this fact, we obtain
\begin{align*}
\snormc[W^{1,r}(I)]{v-\realiz\left(\Phi^{v,\Tcal,\bm p}_{\eps}\right)}
	= &\,\snormc[W^{1,r}(I)]{ (v-\bar{v}) - \sum_{i=1}^N \realiz(\Phi^{v-\bar{v},I_i}_{\eps_i}) }
	\\
	= &\, 
		\left(\sum_{i=1}^N \snormc[W^{1,r}(I_i)]{(v-\bar{v})-\realiz(\Phi^{v-\bar{v},I_i}_{\eps_i})}^{r}
			\right)^{1/r}
	\\
	\leq &\, 
		\left(  \sum_{i=1}^N \eps_i^{r} \snorm[W^{1,r}(I_i)]{v-\bar{v}}^{r}  \right)^{1/r}
	\\
	\leq &\, 
		\left(  \sum_{i=1}^N \eps^{r} 4^{-r} 
			( \snorm[W^{1,r}(I_i)]{v} + \snorm[W^{1,r}(I_i)]{\bar{v}} )^{r}  \right)^{1/r}
	\leq \tfrac12 \eps \snorm[W^{1,r}(I)]{v}
\end{align*}
and by Poincar\'e's inequality
\begin{align*}
(\tfrac1h)^{r} \normc[L^{r}(I)]{v-\realiz\left(\Phi^{v,\Tcal,\bm p}_{\eps}\right)}^{r}
	\leq &\, \sum_{i=1}^N (\tfrac{1}{h_i})^{r}
		\normc[L^{r}(I_i)]{v-\realiz\left(\Phi^{v,\Tcal,\bm p}_{\eps}\right)}^{r}
	\\
	\leq &\, \sum_{i=1}^N \snormc[W^{1,r}(I_i)]{v-\realiz\left(\Phi^{v,\Tcal,\bm p}_{\eps}\right)}^{r}
	= \snormc[W^{1,r}(I)]{v-\realiz\left(\Phi^{v,\Tcal,\bm p}_{\eps}\right)}^{r}
.
\end{align*}
The network depth and size can be estimated in terms of $p_{\max} := \max_{i=1}^N p_i$:
\begin{align*}
\depth\left(\Phi^{v,\Tcal,\bm p}_{\eps}\right)
	\leq &\, 
		\max_{i=0}^N \ell_i + 1
	\\
	\leq &\,
		\Big( C_{\depth}(1+\log_2(p_{\max})) \log_2\big(1/\eps\big)
		+ \tfrac{2}{3} C_{\depth} (\log_2(p_{\max}))^3 
	\\
	&\, + C (1+\log_2(p_{\max}))^2 \Big)
	,
	\\
\size\left(\Phi^{v,\Tcal,\bm p}_{\eps}\right)
	\leq &\, 
		\size(\Phi^{\bar{v}} \sconc \Phi^{\Id}_{1,\ell_0})
		+ \sum_{i=1}^N \size(\Phi^{v-\bar{v},I_i}_{\eps_i} \sconc \Phi^{\Id}_{1,\ell_i}) 
	\\
	\leq &\, 
		\size(\Phi^{\bar{v}})
		+ \sizefirst(\Phi^{\bar{v}})
		+ \sizelast(\Phi^{\Id}_{1,\ell_0})
		+ \size(\Phi^{\Id}_{1,\ell_0})
	\\
	&\,
		+ \sum_{i=1}^N \Big( 
			\size( \Phi^{v-\bar{v},I_i}_{\eps_i}) 
		+ \sizefirst( \Phi^{v-\bar{v},I_i}_{\eps_i})
		+ \sizelast(\Phi^{\Id}_{1,\ell_i})
		+ \size(\Phi^{\Id}_{1,\ell_i})\Big)
	\\
	\leq &\, (3N+1) + 2N + 2 + 2\ell_0
	\\
	&\, + \sum_{i=1}^N \Big( 4C_{\size} p_i \log_2\big(1/\eps_i\big) + 40C_{\size} p_i\log_2(p_i)
	\\
	&\, 
		+ 6C_{\depth} (1+\log_2(p_i))^2 \log_2\big(1/\eps_i\big)
		+ C p_i \Big)
		+ 4N + 2N + 2\sum_{i=1}^N \ell_i
	\\
	\leq &\, \sum_{i=1}^N p_i \big( 4C_{\size} \log_2\big(1/\eps\big) 
		+ 40C_{\size} \log_2(p_i) \big)
	\\
	&\, + \sum_{i=1}^N 6C_{\depth} (1+\log_2(p_i))^2 \log_2\big(1/\eps\big)
		+ C \sum_{i=1}^N p_i
	\\ 
	&\,
		+ 2N \Big( C_{\depth}(1+\log_2(p_{\max})) \log_2\big(1/\eps\big)
			+ \tfrac{2}{3} C_{\depth} (\log_2(p_{\max}))^3 
	\\ 
	&\, 	
			+ C (1+\log_2(p_{\max}))^2 \Big)  
	,
	\\
\sizefirst\left(\Phi^{v,\Tcal,\bm p}_{\eps}\right)
	\leq &\,
		\sum_{i=0}^N \sizefirst(\Phi^{\Id}_{1,\ell_i}) 
		\leq 2N+2
	,
	\\
\sizelast\left(\Phi^{v,\Tcal,\bm p}_{\eps}\right)
	\leq &\, 
		\sizelast(\Phi^{\bar{v}}) + \sum_{i=1}^N \sizelast(\Phi^{v-\bar{v},I_i}_{\eps_i})
	\leq (N+1) + \sum_{i=1}^N 2(p_i+1) 
	\\
	= &\, 3N+1 + 2\sum_{i=1}^N p_i
	.
\end{align*}
The realization is exact in the points $(x_j)_{j=0}^N$
because $\Phi^{\bar{v}}$ emulates $\bar{v}$ exactly
and because $\realiz(\Phi^{v-\bar{v},I_i}_{\eps_i})$ vanishes in $(x_j)_{j=0}^N$
for all $i=1,\ldots,N$.
Given $\Tcal$,
the hidden layer weights and biases are independent of $v$,
because those of $\Phi^{\bar{v}}$ are independent of $\bar{v}$ 
(see the proof of \cite[Lemma 3.1]{OPS2020})
and those of $\realiz(\Phi^{v-\bar{v},I_i}_{\eps_i})$ are independent of $v-\bar{v}$,
as stated in Corollary \ref{cor:relupolynomH10}.
The weights and biases in the output layer
can be computed as linear combinations of the function values of $v$
in the \CC grids on the subintervals of $\Tcal$.
Explicit formulas are given in Remark \ref{rem:weightformulas} below.
\end{proof}

\begin{remark}
\label{rem:weightformulas}
The weights of $\Phi^{v,\Tcal,\bm p}_{\eps}$ in the output layer
can be computed easily from the function values of $v$
in the \CC grids on the subintervals of the partition $\Tcal$
using the inverse fast Fourier transform.
In this remark we give explicit formulas.

The weights in the final layer of $\Phi^{\bar{v}}$ are 
$\tfrac{v(x_1) - v(x_0)}{x_1-x_0}$ and 
$\tfrac{v(x_i) - v(x_{i-1})}{x_i-x_{i-1}} - \tfrac{v(x_{i-1}) - v(x_{i-2})}{x_{i-1}-x_{i-2}}$
for $i=2,\ldots,N$.
The bias equals $v(x_0)$.
For $i=1,\ldots,N$, the weights and biases in the final layer of $\Phi^{v-\bar{v},I_i}_{\eps_i}$ 
are, for $w_i := (v-\bar{v}) |_{I_i}$,
\begin{subequations}
\label{eq:ifftproprelupwpolynom}
\begin{align}
\nonumber
\widehat{(w_i)}_{\ell} := &\, 
\frac{2^{\mathds{1}_{\{1,\ldots,{p_i}-1\}}(\ell)}}{2{p_i}} \sum_{j=0}^{2{p_i}-1} (v-\bar{v}) (x^{\cc,p_i}_{I_i,j}) \cos(\ell j \pi / {p_i}) 
	\\
	= &\, 2^{\mathds{1}_{\{1,\ldots,{p_i}-1\}}(\ell)} \ifft\left( \{ (v-\bar{v}) (x^{\cc,p_i}_{I_i,j}) \}_{j=0}^{2{p_i}-1} \right)_\ell,
	\qquad 
	\ell=0,\ldots,p_i
	,
\label{eq:ifftproprelupwpolynomi}
\end{align}
where
$(v-\bar{v}) (x^{\cc,p_i}_{I_i,j})$ 
can be expressed in terms of function values of $v$ as
\begin{align}
\label{eq:ifftproprelupwpolynomii}
(v-\bar{v}) (x^{\cc,p_i}_{I_i,j}) 
	= v(x^{\cc,p_i}_{I_i,j}) 
	- \frac{ x_i-x^{\cc,p_i}_{I_i,j} }{x_i-x_{i-1}} v(x_{i-1})
	- \frac{ x^{\cc,p_i}_{I_i,j}-x_{i-1} }{x_i-x_{i-1}} v(x_i).
\end{align}
\end{subequations}

Because $\bar{v}|_{I_i} \in \bbP_1(I_i)$,
the \Cheb coefficients $ \widehat{(w_i)}_{\ell}$ of $(v-\bar{v}) |_{I_i}$ of degree $\ell>1$
equal those of $v$. 
Therefore, for $\ell>1$, $v - \bar{v}$ can be replaced by $v$ 
in \eqref{eq:ifftproprelupwpolynomi}.

For $\ell=0,1$, $\widehat{(w_i)}_{\ell}$ 
can be determined from $\big(\widehat{(w_i)}_{\ell}\big)_{\ell>1}$
by writing the conditions
$(v-\bar{v})(x_{i-1}) = 0 = (v-\bar{v})(x_{i})$ 
in terms of the \Cheb coefficients,
using that $T_\ell(\pm1) = (\pm1)^\ell$ for all $\ell\in\N_0$.
This gives
\begin{align}
\label{eq:ifftproprelupwpolynomlowdeg}
\sum_{\ell=0}^{p_i} (-1)^\ell \widehat{(w_i)}_{\ell} 
	= 0 
	= \sum_{\ell=0}^{p_i} \widehat{(w_i)}_{\ell}
.
\end{align}
\end{remark}

With the theory of interpolation spaces,
we directly obtain the following corollary of Proposition \ref{prop:relupwpolynom}
when the error is measured in fractional Sobolev norms.
\begin{corollary}
\label{cor:fracsobrate}
Under the assumptions of Proposition \ref{prop:relupwpolynom},
for all 
$v\in S_{\bm p} (I,\Tcal)$,
$0<\eps< 1$,
$s\in[0,1]$ 
and 
$r\in[1,\infty]$ 
holds
\begin{align*}
\normc[W^{s,r}(I)]{v-\realiz\left(\Phi^{v,\Tcal,\bm p}_{\eps}\right)}
	\leq &\, 
		C(r,s,I) h^{1-s} \eps \snormc[W^{1,r}(I)]{v}.
\end{align*}

\end{corollary}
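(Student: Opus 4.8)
The plan is to reduce to the two endpoint cases $s=0$ and $s=1$, which follow directly from Proposition~\ref{prop:relupwpolynom}, and then to fill in $s\in(0,1)$ by interpolation. Abbreviate $e := v-\realiz(\Phi^{v,\Tcal,\bm p}_{\eps})$. The second displayed error estimate in Proposition~\ref{prop:relupwpolynom} already contains both endpoints:
\begin{align*}
\normc[L^{r}(I)]{e} \leq \tfrac12 h \eps \snormc[W^{1,r}(I)]{v},
\qquad
\snormc[W^{1,r}(I)]{e} \leq \tfrac12 \eps \snormc[W^{1,r}(I)]{v}.
\end{align*}
The first bound is exactly the claim for $s=0$, since $W^{0,r}(I)=L^{r}(I)$ and $h^{1-0}=h$. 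For the full first-order norm I would combine the two, using $h=\max_i h_i\leq b-a=|I|$, to get $\normc[W^{1,r}(I)]{e}\leq \tfrac12(|I|+1)\eps\snormc[W^{1,r}(I)]{v}=:C(I)\eps\snormc[W^{1,r}(I)]{v}$, which is the claim for $s=1$ (where $h^{1-1}=1$). Thus only $s\in(0,1)$ remains.

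For the intermediate exponents I would invoke the classical identification of the fractional Sobolev--Slobodeckij space of Section~\ref{sec:FinOrdFrc} as a real interpolation space,
\begin{align*}
W^{s,r}(I) = \big(L^{r}(I),\,W^{1,r}(I)\big)_{s,r},
\qquad s\in(0,1),\ 1\leq r\leq\infty,
\end{align*}
with norm-equivalence constants depending only on $s$, $r$ and $I$ (for $r=\infty$ this is the identification of the H\"older space $C^{0,s}(I)$ with $(L^{\infty}(I),W^{1,\infty}(I))_{s,\infty}$, matching the $L^\infty$-type seminorm used in the definition of $W^{s,\infty}(I)$). The attendant interpolation inequality then gives
\begin{align*}
\normc[W^{s,r}(I)]{e}
\leq C(r,s,I)\,\normc[L^{r}(I)]{e}^{\,1-s}\,\normc[W^{1,r}(I)]{e}^{\,s}.
\end{align*}

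Substituting the two endpoint bounds and using $h\leq|I|$ to absorb the bounded factor $(h+1)^{s}$ into the constant then yields
\begin{align*}
\normc[W^{s,r}(I)]{e}
\leq C(r,s,I)\big(\tfrac12 h\eps\snormc[W^{1,r}(I)]{v}\big)^{1-s}\big(C(I)\eps\snormc[W^{1,r}(I)]{v}\big)^{s}
= C(r,s,I)\,h^{1-s}\eps\,\snormc[W^{1,r}(I)]{v},
\end{align*}
which is the asserted estimate. The decisive point is that all of the $h$-scaling is carried by the $L^{r}(I)$ factor, producing precisely the power $h^{1-s}$, while the residual $h$-dependence in $\normc[W^{1,r}(I)]{e}$ stays bounded by $|I|$ and is swept into $C(r,s,I)$. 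The main obstacle is therefore not the final computation but ensuring that the interpolation-space identification is quantitatively valid across the whole range $1\leq r\leq\infty$ (including the H\"older endpoint $r=\infty$) on the bounded interval $I$; since the corollary permits the constant to depend on $I$, no sharp tracking of the interval length beyond $h\leq|I|$ is needed.
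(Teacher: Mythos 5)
Your proposal is correct and follows essentially the same route as the paper: the two endpoint cases $s\in\{0,1\}$ are read off from the second error estimate of Proposition \ref{prop:relupwpolynom}, and the intermediate range $s\in(0,1)$ is handled by the interpolation inequality $\normc[W^{s,r}(I)]{e}\leq C(r,s,I)\normc[L^r(I)]{e}^{1-s}\normc[W^{1,r}(I)]{e}^{s}$, which the paper obtains from the identification of $W^{s,r}(I)$ as a real interpolation space (citing Lunardi and Brenner--Scott, with the same caveat you raise about the endpoint $r=\infty$). The bookkeeping of the $h$-powers is identical.
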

\begin{proof}
First, we conclude the statement for $s\in\{0,1\}$ 
from the second error bound in Proposition \ref{prop:relupwpolynom}:
\begin{align*}
\normc[L^{r}(I)]{v-\realiz\left(\Phi^{v,\Tcal,\bm p}_{\eps}\right)}
	\leq &\, h \eps \snormc[W^{1,r}(I)]{v}
	,
	\qquad
\normc[W^{1,r}(I)]{v-\realiz\left(\Phi^{v,\Tcal,\bm p}_{\eps}\right)} 
	\leq	\eps \snormc[W^{1,r}(I)]{v}
	.
\end{align*}

For $s\in(0,1)$,
we use \cite[Example 1.7]{Lunardi2018}
and \cite[Theorem 14.2.3]{BS2008}\footnote{
\cite[Theorem 14.2.3]{BS2008} is stated for $1\leq r < \infty$ 
(which is denoted by $p$ in the notation of the reference).
The statement also holds for $r=\infty$.
For function spaces over the domain $\R$,
a proof is given in \cite[Example 1.8]{Lunardi2018}
and the discussion after the proof of that example.
The statement on the bounded interval $I$ 
follows from the statement on $\R$
by the same steps as in the proof of \cite[Theorem 14.2.3]{BS2008}.
},
which gives
\begin{align*}
\normc[W^{s,r}(I)]{v-\realiz\left(\Phi^{v,\Tcal,\bm p}_{\eps}\right)} 
	\leq &\, C(r,s,I)
\normc[L^r(I)]{v-\realiz\left(\Phi^{v,\Tcal,\bm p}_{\eps}\right)}^{1-s}
\normc[W^{1,r}(I)]{w-\realiz\left(\Phi^{w,\Tcal,\bm p}_{\eps}\right)}^s
	\\
	\leq &\,
		C(r,s,I) h^{1-s} \eps^{1-s} \snormc[W^{1,r}(I)]{v}^{1-s} \eps^s \snormc[W^{1,r}(I)]{v}^s
	= C(r,s,I) h^{1-s} \eps \snormc[W^{1,r}(I)]{v}
	.
\end{align*}
\end{proof}
%%%%%%%%%%%%%%%%%%%%%%%%%%%%%%%%%%%%%%%%%%%%%%%%%%%%%%%%%%%%%%%%%%%%%
\section{ReLU emulation of univariate finite element spaces} 
\label{sec:FESpaces}
%%%%%%%%%%%%%%%%%%%%%%%%%%%%%%%%%%%%%%%%%%%%%%%%%%%%%%%%%%%%%%%%%%%%%
Based on Proposition \ref{prop:relupwpolynom},
improved dependence of the NN depth and size on the polynomial degree 
directly follows for several emulation rate bounds from \cite{OPS2020}
for univariate $h$-, $p$- and $hp$-Finite Element spaces.
We state them in 
Sections~\ref{sec:FreeKnotSplApr}, \ref{sec:spectralmeth} and \ref{sec:hpFEM}.
The former two are stated, without detailing proofs.
Their proofs are completely analogous to those of the corresponding results in \cite{OPS2020},
with improved dependence of NN size on the polynomial degree due to 
the emulation of \Cheb polynomials instead of monomials.
%%%%%%%%%%%%%%%%%%%%%%%%%%%%%%%%%%%%%%%%%%%%%%%%%%%%%%%%%%%%%%%%%%%%%
\subsection{Free-knot splines}
\label{sec:FreeKnotSplApr}
%%%%%%%%%%%%%%%%%%%%%%%%%%%%%%%%%%%%%%%%%%%%%%%%%%%%%%%%%%%%%%%%%%%%%
The following analogue of \cite[Theorem 5.4]{OPS2020} obtains,
for fixed parameters $q,q',t,s,s',p$,
and up to a factor $\log(N)$ in the network size, 
the classical adaptive ``$h$-FEM'' rate $N^{-(s-s')}$,
with a network depth of the order $\log(N)$.
Similar results, which also apply to multivariate functions, 
but are restricted to error bounds in $L^q$, $q\in(0,\infty]$,
were obtained in \cite[Proposition 1 and Theorem 1]{Suzuki2019}.

We first state a spline approximation result.
Combined with Proposition \ref{prop:relupwpolynom},
we obtain ReLU NN approximations in Corollary \ref{cor:hFEMReLUResult}.
The proof of that corollary 
is a straightforward application of the two propositions,
analogous to the proof of \cite[Theorem 5.4]{OPS2020}.

\begin{proposition}[{\cite[Theorems 3 and 6]{oswald1990degree}}]
\label{prop:BesoVApprox}
Let $I = (0,1)$, $q,q',t,t',s,s' \in (0, \infty]$, $p \in \N$, 
and 
$$
q < q',\quad  s < p + 1/q, \quad s' < s- 1/q + 1/q'.
$$
Then, there exists a $C_3 \coloneqq C(q,q',t,t',s,s',p) > 0$ and, 
for every $N \in \N$ and every $f$ in $B_{q,t}^s(I)$, 
for $\bsp = (p,\ldots,p) \in \N^N$,
there exist
a partition $\Tcal$ of $I$ with $N$ elements and
$\varphi \in S_{\bsp}(I,\Tcal)$ 
such that 
\begin{align}
\label{eq:BesoVRate}
\left\|f- \varphi\right\|_{B^{s'}_{q',t'}(I)} \leq C_3 N^{-(s-s')}\|f\|_{B^{s}_{q,t}(I)}.
\end{align}
Moreover,
\begin{align} \label{eq:BoundOnSplineNorm}
\left\|\varphi\right\|_{B^{s}_{q,t}(I)} \leq C_3 \|f\|_{B^{s}_{q,t}(I)}.
\end{align}
\end{proposition}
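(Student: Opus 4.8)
The plan is to construct $\varphi$ by adaptive, free-knot spline approximation and to read off both \eqref{eq:BesoVRate} and \eqref{eq:BoundOnSplineNorm} from the embedding of $B^s_{q,t}(I)$ into the nonlinear approximation space associated with such splines, following the analysis of \cite{oswald1990degree}. The first step is to pass from the function $f$ to a sequence of \emph{local errors}. For a dyadic subinterval $J\subset I$ at level $j$, set $e_p(f,J)_q := \inf_{P\in\bbP_p}\normc[L^q(J)]{f-P}$. By Whitney's inequality (see, e.g., \cite{DeVoreLorentz1993}) the local best polynomial error is controlled by a local modulus of smoothness of order $p+1$, and, since $s<p+1/q$, the Besov quasi-norm admits the equivalent description
\[
\normc[B^s_{q,t}(I)]{f} \asymp \normc[L^q(I)]{f} + \Big( \sum_{j\ge 0} 2^{jst}\Big(\sum_{J\in\mathcal{D}_j} e_p(f,J)_q^{\,q}\Big)^{t/q}\Big)^{1/t},
\]
where $\mathcal{D}_j$ denotes the dyadic intervals of level $j$. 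This reduces the problem to a statement about the double sequence $(e_p(f,J)_q)_J$.

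Second, given $N$, I would select knots greedily. Ordering the dyadic intervals by decreasing local error and retaining the $N$ largest produces an adaptive partition $\Tcal$ with $O(N)$ elements; on each element $I_i$ one replaces $f$ by a near-best local polynomial in $\bbP_p$, glued into a globally continuous $\varphi\in S_{\bsp}(I,\Tcal)$ by means of a stable quasi-interpolant (equivalently, a local $L^q$-projection onto a B-spline basis), so that continuity and the local approximation order are preserved simultaneously. The decisive analytic input is the combinatorial estimate of DeVore--Popov / Birman--Solomyak type: with $\alpha := s-s'$ and the critical index $\sigma$ defined by $1/\sigma := \alpha + 1/q'$, the $N$-term adaptive error in $B^{s'}_{q',t'}(I)$ decays like $N^{-\alpha}$ as soon as the error sequence lies in weak-$\ell^\sigma$.

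Third, to verify this weak-$\ell^\sigma$ membership from the regularity of $f$, I would invoke a discrete Hardy inequality over the dyadic levels together with the two embedding hypotheses $q<q'$ and $s'<s-1/q+1/q'$. The latter is exactly $1/\sigma>1/q$, i.e. $\sigma<q$, which places $B^s_{q,t}(I)$ strictly above the nonlinear approximation line; hence the level-wise $\ell^q$ control of the local errors supplied by the equivalence above dominates the weak-$\ell^\sigma$ quasi-norm, and \eqref{eq:BesoVRate} follows. The stability bound \eqref{eq:BoundOnSplineNorm} is obtained by running the same equivalence backwards: because $\varphi$ is assembled from uniformly bounded local projections of $f$, its local errors are dominated by those of $f$ on a controlled neighbourhood of each element, and resumming over levels yields $\normc[B^s_{q,t}(I)]{\varphi}\le C\normc[B^s_{q,t}(I)]{f}$.

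The main obstacle is the sharp greedy-selection estimate in the full quasi-Banach range, where $q,q',t,t'$ may be smaller than $1$. There the triangle inequality is available only in its $\rho$-subadditive form, so every summation over dyadic levels has to be carried out with the corresponding quasi-Banach discrete Hardy inequalities, and the strict inequalities $s<p+1/q$ and $s'<s-1/q+1/q'$ must be tracked to keep the exponent $\sigma$ on the correct side of $q$. A secondary technical point is gluing the local polynomials into a genuinely continuous element of $S_{\bsp}(I,\Tcal)$ without degrading either the rate $N^{-\alpha}$ or the stability constant; using a B-spline quasi-interpolant adapted to the free-knot partition resolves this cleanly.
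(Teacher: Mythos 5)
The paper does not prove this proposition at all: it is imported verbatim as Theorems 3 and 6 of Oswald's 1990 paper \cite{oswald1990degree}, so there is no internal argument to compare your proposal against. Your sketch is a reasonable reconstruction of the standard strategy behind such results (DeVore--Popov, Petrushev, Oswald): characterize $B^s_{q,t}(I)$ through local best polynomial approximation errors on dyadic intervals via Whitney's inequality, build an adaptive free-knot partition, glue near-best local polynomials into a continuous spline with a stable quasi-interpolant, and control the $N$-term error through weak-$\ell^\sigma$ membership with $1/\sigma = (s-s')+1/q'$, which the hypothesis $s'<s-1/q+1/q'$ places strictly below $1/q$. This is, in outline, how the cited theorems are proved, including the separate stability statement \eqref{eq:BoundOnSplineNorm}; the quasi-Banach complications you flag for $q,t<1$ are genuine and are handled in the literature exactly as you indicate, by $\rho$-subadditive triangle inequalities and discrete Hardy inequalities.

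One step as written would fail: ordering the dyadic intervals by decreasing local error and ``retaining the $N$ largest'' does not produce a partition of $I$ --- the retained intervals are in general nested, overlapping, or fail to cover $I$. The correct construction is a stopping-time refinement of Birman--Solomyak type: fix a threshold $\eps>0$ and bisect recursively until a suitable superadditive set function majorizing $e_p(f,J)_q$ drops below $\eps$ on every leaf; superadditivity over disjoint unions bounds the number of leaves in terms of the total mass divided by $\eps$, and $\eps$ is then chosen to equilibrate cardinality against accuracy. Without this, neither the cardinality bound $\snorm{\Tcal}\leq CN$ nor the error estimate on the complement of the selected intervals is available. A second point to make explicit is that the stability bound \eqref{eq:BoundOnSplineNorm} is not merely the equivalence ``run backwards'': it requires the local quasi-interpolation operators to be bounded on $B^s_{q,t}$ uniformly over arbitrary, highly nonuniform free-knot partitions, which is the actual content of Oswald's Theorem 6 rather than a formal consequence of the direct estimate.
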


\begin{corollary}
\label{cor:hFEMReLUResult}
Let 
$I = (0,1)$,
$0 < q < q' \leq \infty$, $q'\geq 1$, $0 < t \leq \infty$. 
Let $p \in \N$, $0 < s' \leq 1 < s < p + 1/q$, $1 - 1/q' < s - 1/q$ 
and $s'<1$ if $p=1$ and  $q'=\infty$. 

Then, 
there exists a constant $C_*(q,q',t,s,s',p) > 0$ and, 
for every $N \in \N$ and every $f \in B_{q,t}^s(I)$, 
there exists a NN $\Phi^N_f$ such that 
\begin{align*}
\left\|f- \realiz\left( \Phi^N_f \right)\right\|_{W^{s',q'}(I)} 
     &\leq C_* N^{-(s-s')}\|f\|_{B^{s}_{q,t}(I)} 
\end{align*}
and
\begin{align*}
\depth\left(\Phi^N_f\right)
	\leq &\, C_{\depth} (1+\log_2(p)) (s-s') \log_2\left(N\right) 
		+ \tfrac23 C_{\depth} \log_2^3(p)
		+ C\left(1+\log_2(p) \right)^2,
	\\
\size\left(\Phi^N_f\right) 
	\leq &\, 4C_{\size} (s-s') N \log_2\left(N\right) p 
		+ 40C_{\size} N p\log_2(p)
	\\
	&\, + 6C_{\depth} (s-s') \log_2\left(N\right) N ( 1+ \log_2(p) )^2
		+ C N p 
	\\
	&\, + 2N\Big( C_{\depth} (1+\log_2(p)) (s-s') \log_2\left(N\right) 
		+ \tfrac23 C_{\depth} \log_2^3(p)
		+ C\left(1+\log_2(p)\right)^2\Big)
	,
	\\
\sizefirst\left(\Phi^N_f\right)
	\leq &\, 2N+2
	,
	\\
\sizelast\left(\Phi^N_f\right)
	\leq &\, N(2p+3)+1
.
	\end{align*}
\end{corollary}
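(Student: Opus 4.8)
The plan is to compose the spline approximation of Proposition \ref{prop:BesoVApprox} with the ReLU emulation of piecewise polynomials from Proposition \ref{prop:relupwpolynom}, in its fractional form Corollary \ref{cor:fracsobrate}, exactly as in the proof of \cite[Theorem 5.4]{OPS2020}, and then to read off the complexity bounds by substitution.

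First I would invoke Proposition \ref{prop:BesoVApprox} with uniform degree $\bsp = (p,\ldots,p)\in\N^N$ and a third index $t'$ chosen so that $B^{s'}_{q',t'}(I) \hookrightarrow W^{s',q'}(I)$; for instance $t'=q'$ when $s'\in(0,1)$, where the two spaces coincide, and $t'=1$ when $s'=1$. To apply the proposition I must verify its hypotheses: $q<q'$ and $s<p+1/q$ are assumed, while $s' < s - 1/q + 1/q'$ follows from $s' \leq 1 < s - 1/q + 1/q'$, i.e.\ from the standing assumption $1 - 1/q' < s - 1/q$. This produces a partition $\Tcal$ of $I$ into $N$ elements and a spline $\varphi \in S_{\bsp}(I,\Tcal)$ with $\normc[B^{s'}_{q',t'}(I)]{f-\varphi} \leq C_3 N^{-(s-s')}\normc[B^{s}_{q,t}(I)]{f}$ and, by \eqref{eq:BoundOnSplineNorm}, $\normc[B^{s}_{q,t}(I)]{\varphi} \leq C_3\normc[B^{s}_{q,t}(I)]{f}$; the chosen embedding then bounds $\normc[W^{s',q'}(I)]{f-\varphi}$ by the same rate.

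Second I would set $\Phi^N_f := \Phi^{\varphi,\Tcal,\bsp}_{\eps}$ from Proposition \ref{prop:relupwpolynom} with $\eps := N^{-(s-s')}$. Corollary \ref{cor:fracsobrate} (with $s=s'$ and $r=q'$) yields $\normc[W^{s',q'}(I)]{\varphi - \realiz(\Phi^N_f)} \leq C(q',s',I)\, h^{1-s'}\eps\, \snormc[W^{1,q'}(I)]{\varphi}$, and since $I=(0,1)$ forces $h \leq 1$ with $1-s'\geq 0$, the factor $h^{1-s'}\leq 1$ is harmless. The remaining seminorm is controlled by the Sobolev embedding $B^{s}_{q,t}(I)\hookrightarrow W^{1,q'}(I)$, valid because $s - 1/q > 1 - 1/q'$, together with \eqref{eq:BoundOnSplineNorm}, giving $\snormc[W^{1,q'}(I)]{\varphi} \leq C\normc[B^{s}_{q,t}(I)]{\varphi} \leq C\normc[B^{s}_{q,t}(I)]{f}$. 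A triangle inequality in $W^{s',q'}(I)$ then combines both contributions into the claimed rate $C_* N^{-(s-s')}\normc[B^{s}_{q,t}(I)]{f}$, the embedding and spline constants being absorbed into $C_*$.

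It remains to read off the four complexity estimates. With $p_i=p$ for all $i$ (so $p_{\max}=p$), with $N$ elements, and with $\log_2(1/\eps) = (s-s')\log_2 N$, substituting into the depth, $\size$, $\sizefirst$ and $\sizelast$ bounds of Proposition \ref{prop:relupwpolynom} reproduces the stated estimates directly, using $\sum_{i} p_i = Np$ in the $\size$ and $\sizelast$ bounds. The main obstacle is not this bookkeeping, which is routine substitution, but the function-space matching in the first two steps: selecting $t'$ and justifying the embeddings $B^{s'}_{q',t'}\hookrightarrow W^{s',q'}$ and $B^{s}_{q,t}\hookrightarrow W^{1,q'}$, and confirming that the hypotheses of Proposition \ref{prop:BesoVApprox} are mutually consistent. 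This last point is precisely where the exclusion ``$s'<1$ if $p=1$ and $q'=\infty$'' is forced: at $s'=1$ and $q'=\infty$ the requirements $1 < s-1/q$ and $s < p + 1/q$ together demand $p>1$.
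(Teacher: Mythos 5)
Your proposal is correct and follows exactly the route the paper intends: it declares the proof ``a straightforward application of the two propositions, analogous to the proof of \cite[Theorem 5.4]{OPS2020}'', i.e.\ spline approximation via Proposition \ref{prop:BesoVApprox}, ReLU emulation of the spline via Proposition \ref{prop:relupwpolynom} and Corollary \ref{cor:fracsobrate} with $\eps = N^{-(s-s')}$, the embeddings $B^{s'}_{q',t'}\hookrightarrow W^{s',q'}$ and $B^{s}_{q,t}\hookrightarrow W^{1,q'}$ together with \eqref{eq:BoundOnSplineNorm}, and a triangle inequality. Your verification of the hypotheses (including why the exclusion $s'<1$ for $p=1$, $q'=\infty$ is forced) and the substitution yielding the complexity bounds both check out.
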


The improvement with respect to \cite[Theorem 5.4]{OPS2020}
is a better dependence of the network depth and size 
on the polynomial degree $p$, 
namely $C(1+\log(p))^3$ and $Cp(1+\log(p))$
instead of $Cp(1+\log(p))$ and $Cp^2$ in \cite[Theorem 5.4]{OPS2020}.
%
%%%%%%%%%%%%%%%%%%%%%%%%%%%%%%%%%%%%%%%%%%%%%%%%%%%%%%%%%%%%%%%%%%%%%
\subsection{Spectral methods}
\label{sec:spectralmeth}
%%%%%%%%%%%%%%%%%%%%%%%%%%%%%%%%%%%%%%%%%%%%%%%%%%%%%%%%%%%%%%%%%%%%%

Corollary \ref{cor:relupfemTcal} provides ReLU NN emulation rate bounds 
for spectral and so-called ``$p$-version Finite Element'' methods.
The ReLU NN constructions based on \Cheb polynomials 
imply \emph{improved dependence of the NN size on the polynomial degree 
and superior numerical stability},
in comparison to the construction in \cite[Theorem 5.8]{OPS2020}.
In particular,
the ReLU NN size is now proportional to the number of degrees of freedom $Np+1$,
up to a polylogarithmic factor.
In 
\cite[Proposition 1 and Theorem 1]{Suzuki2019} 
and 
\cite[Corollary 4.2]{GKP2020}, 
similar statements with the same approximation rates were obtained,
but based on a different argument. 
There, 
the approximation was based on the NN emulation of B-splines 
and of averaged Taylor polynomials, respectively, 
of fixed polynomial degree, and on partitions that are refined to reduce the error.
The result below is based on the approximation of piecewise polynomials 
for which both the polynomial degree and the mesh can be refined to reduce the error.
We observe that 
\cite[Proposition 1 and Theorem 1]{Suzuki2019} and \cite[Corollary 4.2]{GKP2020} 
apply more generally, in particular also to multivariate functions.

First, we state the finite element result 
on which Corollary \ref{cor:relupfemTcal} is based.

\begin{proposition}[{{\cite[Theorem 3.17]{Schwab1998}, \cite[Remark 5.7]{OPS2020}}}]
\label{prop:pfemTcal}
Let $\bar{s}\in\N_0$.
Then, there exists a constant $c(\bar{s}) > 0$
such that
for all partitions $\Tcal$ of $I=(0,1)$ with $N$ elements, 
for all $u\in H^{\bar{s}+1}(I)$ and all $p\in\N$,
with 
$\bm p \coloneqq (p,\ldots,p) \in \N^N$,
there exists $v\in S_{\bm p}(I,\Tcal)$
such that for all $s\in\N_0$ satisfying $s\leq \min\{\bar{s},p\}$
\begin{align*}
\normc[H^1(I)]{u - v} \leq &\, c(\bar{s}) h^s p^{-s} \snormc[H^{s+1}(I)]{u}
,
\end{align*}
where $h = \max_{i\in\{1,\ldots,N\}} h_i$ 
is the maximum element size, defined in Section \ref{sec:CPwLoned}.

In addition, $\snormc[H^1(I)]{v} \leq \snormc[H^1(I)]{u}$.
\end{proposition}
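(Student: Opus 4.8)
The plan is to construct $v$ element by element through an $H^1$-projection that acts on the derivative of $u$, and then to invoke the classical $p$-version polynomial approximation estimate for the $L^2$-projection. Write $I_i = (x_{i-1},x_i)$ with $h_i = x_i - x_{i-1}$, and denote by $\Pi^{i}_{p-1}: L^2(I_i) \to \bbP_{p-1}(I_i)$ the $L^2(I_i)$-orthogonal projection onto polynomials of degree at most $p-1$ (note $p\geq1$, so $\bbP_{p-1}(I_i)$ contains the constants). First I would define, on each element,
\begin{align*}
v|_{I_i}(x) := u(x_{i-1}) + \int_{x_{i-1}}^x \big( \Pi^{i}_{p-1} u' \big)(t)\,\dd t,
\qquad x \in \overline{I_i}.
\end{align*}
Then $(v|_{I_i})' = \Pi^{i}_{p-1} u' \in \bbP_{p-1}(I_i)$, so that $v|_{I_i} \in \bbP_{p}(I_i)$.

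Second, I would verify that $v \in S_{\bm p}(I,\Tcal)$, i.e. that $v$ is continuous and interpolates $u$ at the partition nodes. By construction $v(x_{i-1}) = u(x_{i-1})$, and since $1 \in \bbP_{p-1}(I_i)$ the orthogonal projection preserves the integral of $u'$ against constants, whence
$v(x_i) = u(x_{i-1}) + \int_{I_i} \Pi^{i}_{p-1} u' \,\dd t = u(x_{i-1}) + \int_{I_i} u' \,\dd t = u(x_i)$.
Thus $v$ agrees with $u$ at every node $x_j$, the element contributions match at shared nodes, $v \in C^0(I) \cap S_{\bm p}(I,\Tcal)$, and the nodal exactness follows.

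Third, for the error bound I would use that on each element $\snormc[H^1(I_i)]{u-v} = \normc[L^2(I_i)]{u' - \Pi^{i}_{p-1} u'}$ is exactly the best $L^2$-approximation error of $u'$ by $\bbP_{p-1}(I_i)$. The essential quantitative input, which is the substance of \cite[Theorem 3.17]{Schwab1998}, is the $p$-version estimate: for $0 \leq s \leq \min\{\bar{s},p\}$,
\begin{align*}
\normc[L^2(I_i)]{u' - \Pi^{i}_{p-1} u'}
	\leq c(\bar{s})\, h_i^{s}\, p^{-s}\, \snormc[H^{s}(I_i)]{u'}
	= c(\bar{s})\, h_i^{s}\, p^{-s}\, \snormc[H^{s+1}(I_i)]{u},
\end{align*}
with $c(\bar{s})$ independent of $p$, $h_i$ and $N$. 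Since $(u-v)(x_{i-1}) = 0$, Poincaré's inequality together with $h_i \leq 1$ gives $\normc[H^1(I_i)]{u-v} \leq C \snormc[H^1(I_i)]{u-v}$; squaring, summing over $i = 1,\ldots,N$, using $h_i \leq h$ and $\sum_i \snormc[H^{s+1}(I_i)]{u}^2 = \snormc[H^{s+1}(I)]{u}^2$ then yields the claimed bound $\normc[H^1(I)]{u-v} \leq c(\bar{s})\, h^s p^{-s} \snormc[H^{s+1}(I)]{u}$. The stability estimate is immediate from the contraction property of orthogonal projections: on each element $\snormc[H^1(I_i)]{v} = \normc[L^2(I_i)]{\Pi^{i}_{p-1} u'} \leq \normc[L^2(I_i)]{u'} = \snormc[H^1(I_i)]{u}$, and summing the squares over $i$ gives $\snormc[H^1(I)]{v} \leq \snormc[H^1(I)]{u}$.

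The one genuinely nontrivial ingredient is the $p$-version $L^2$-projection estimate invoked in the third step. Establishing the $h_i^{s} p^{-s}$ scaling with a constant depending only on $\bar{s}$ requires expanding $u'$ in (affinely scaled) Legendre polynomials on $I_i$ and controlling the tail of the expansion via the decay of the Legendre coefficients of $H^s$-functions; this is precisely the content of the cited result and would be the main obstacle were one to prove the proposition from scratch rather than citing \cite[Theorem 3.17]{Schwab1998}.
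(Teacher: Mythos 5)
Your construction is correct and is exactly the one underlying the cited result: the paper gives no proof of this proposition (it simply imports \cite[Theorem 3.17]{Schwab1998} and \cite[Remark 5.7]{OPS2020}), and the operator you build --- integrating the elementwise $L^2$-projection of $u'$ onto $\bbP_{p-1}$ from the left node --- is precisely the projector used there, with the nodal exactness, the $H^1$-stability, and the reduction to the $p$-version $L^2$-estimate for $u'$ all as in that reference. The only point worth flagging is that the bound $\frac{(p-s)!}{(p+s)!}\leq C(s)\,p^{-2s}$ behind your key estimate needs a constant depending on $s$ (hence on $\bar{s}$), which is admissible here since $c(\bar{s})$ is allowed to depend on $\bar{s}$.
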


\begin{corollary}
\label{cor:relupfemTcal}
Let $I=(0,1)$, $\bar{s}\in\N_0$ and $u\in H^{\bar{s}+1}(I)$. 

Then, there is an absolute constant $C>0$ and 
a constant $c(\bar{s}) > 0$ (depending only on $\bar{s}$)
such that,
for all $p,N\in\N$ 
there exists a NN $\Phi^{u,N,p}$ 
such that for all $s\in\N_0$ satisfying 
$s\leq \min\{\bar{s},p\}$
\begin{align*}
\normc[H^1(I)]{u-\realiz(\Phi^{u,N,p})} 
	\leq &\, c(\bar{s}) (Np)^{-s} \normc[H^{s+1}(I)]{u}
	,
	\\
\depth(\Phi^{u,N,p})
	\leq &\, C_{\depth} \bar{s} (1+\log_2(p)) \log_2(Np) 
		+ \tfrac23 C_{\depth} \log_2^3(p)
		+ C\left(1+\log_2(p) \right)^2,
	\\
\size(\Phi^{u,N,p})
	\leq &\, N[ 4C_{\size} \bar{s} p \log_2\left(Np\right) 
		+ 40C_{\size} p\log_2(p)
	\\
	&\, + 6C_{\depth} \bar{s} \log_2(Np) \left(1+ \log_2(p) \right)^2
		+ Cp 
	\\
	&\, + 2C_{\depth} \bar{s} \log_2(Np) \left(1+ \log_2(p) \right) ],
	\\
\sizefirst(\Phi^{u,N,p})
	\leq &\, 2N+2
	,
	\\
\sizelast(\Phi^{u,N,p})
	\leq &\, N(2p+3)+1
.
\end{align*}
\end{corollary}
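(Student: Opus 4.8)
The plan is to combine the spectral finite element approximation of Proposition~\ref{prop:pfemTcal} with the piecewise-polynomial emulation of Proposition~\ref{prop:relupwpolynom}, tying a \emph{single} emulation tolerance to the top smoothness index $\bar s$ so that one network serves all admissible $s$ at once. First I would fix the uniform partition $\Tcal$ of $I=(0,1)$ into $N$ elements, so that $h=1/N$ and hence $h^s p^{-s}=(Np)^{-s}$. Applying Proposition~\ref{prop:pfemTcal} with $\bm p=(p,\ldots,p)$ then produces \emph{one} $v\in S_{\bm p}(I,\Tcal)$, independent of $s$, satisfying $\normc[H^1(I)]{u-v}\le c(\bar s)(Np)^{-s}\snormc[H^{s+1}(I)]{u}$ simultaneously for every $s\le\min\{\bar s,p\}$, together with $\snormc[H^1(I)]{v}\le\snormc[H^1(I)]{u}$.

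Next I would emulate $v$ by the network of Proposition~\ref{prop:relupwpolynom} and set $\Phi^{u,N,p}:=\Phi^{v,\Tcal,\bm p}_{\eps}$ with the $s$-independent choice $\eps:=(Np)^{-\bar s}$. Taking $r=2$ in that proposition gives $\snormc[H^1(I)]{v-\realiz(\Phi^{v,\Tcal,\bm p}_{\eps})}\le\tfrac12\eps\snormc[H^1(I)]{v}$ and $\normc[L^2(I)]{v-\realiz(\Phi^{v,\Tcal,\bm p}_{\eps})}\le\tfrac{h}{2}\eps\snormc[H^1(I)]{v}$; combining these through $\normc[H^1(I)]{\cdot}^2=\normc[L^2(I)]{\cdot}^2+\snormc[H^1(I)]{\cdot}^2$ and using $h\le1$ yields $\normc[H^1(I)]{v-\realiz(\Phi^{v,\Tcal,\bm p}_{\eps})}\le\eps\snormc[H^1(I)]{v}\le\eps\snormc[H^1(I)]{u}$. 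The key observation is that, since $\bar s\ge s$ and $Np\ge1$, we have $\eps=(Np)^{-\bar s}\le(Np)^{-s}$, so this emulation error is itself bounded by $(Np)^{-s}\normc[H^{s+1}(I)]{u}$ for \emph{every} admissible $s$. A triangle inequality then delivers $\normc[H^1(I)]{u-\realiz(\Phi^{u,N,p})}\le(c(\bar s)+1)(Np)^{-s}\normc[H^{s+1}(I)]{u}$, which is the claimed rate after renaming the constant.

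Finally I would read off the complexity bounds by substituting $p_i=p$, $N$ elements, and $\log_2(1/\eps)=\bar s\log_2(Np)$ into the depth and size estimates of Proposition~\ref{prop:relupwpolynom}. The depth bound then matches verbatim. In the size bound, the contribution $2N\max_i(\cdots)$ produces, besides the stated $2C_{\depth}\bar s\log_2(Np)(1+\log_2 p)$ term, the leftover pieces $\tfrac43 N C_{\depth}(\log_2 p)^3$ and $2NC(1+\log_2 p)^2$, which I would absorb into the $CNp$ term using $(\log_2 p)^3\le Cp$ and $(1+\log_2 p)^2\le Cp$. The bounds $\sizefirst\le2N+2$ and $\sizelast\le3N+1+2\sum_i p_i=N(2p+3)+1$ transfer directly.

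The main obstacle is conceptual rather than computational: one must exhibit a \emph{single} network attaining the rate $(Np)^{-s}$ across the whole range $0\le s\le\min\{\bar s,p\}$ at once. This is precisely what forces the $s$-independent choices of both the finite element approximant $v$ (legitimate because Proposition~\ref{prop:pfemTcal} already furnishes a simultaneous bound) and the tolerance $\eps=(Np)^{-\bar s}$; the monotonicity $(Np)^{-\bar s}\le(Np)^{-s}$ is exactly what guarantees that tuning $\eps$ to the hardest case $s=\bar s$ does not spoil the rate for smaller $s$. The degenerate case $\bar s=0$, where $\eps$ would formally equal $1$, is handled by taking any fixed $\eps\in(0,1)$: the target rate is then trivial and the $\bar s\log_2(Np)$ terms in the complexity bounds vanish.
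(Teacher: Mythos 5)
Your proposal is correct and follows exactly the route the paper intends: the paper's (sketched) proof is precisely "apply Proposition \ref{prop:pfemTcal} on the uniform partition so that $h^sp^{-s}=(Np)^{-s}$, then emulate via Proposition \ref{prop:relupwpolynom}," analogous to \cite[Theorem 5.8]{OPS2020}, and your choice $\eps=(Np)^{-\bar s}$ together with the absorption of the leftover $\tfrac43 NC_{\depth}\log_2^3(p)+2NC(1+\log_2(p))^2$ into the $CNp$ term reproduces the stated complexity bounds. The only cosmetic remark is that $\eps$ degenerates to $1$ not only for $\bar s=0$ but also for $N=p=1$; the same fix (any fixed $\eps\in(0,1)$, trivial rate) applies there.
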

This result 
follows from Proposition \ref{prop:pfemTcal}
if we choose $\Tcal$ 
to be the uniform partition of $I$ into $N$ intervals
(which is the optimal choice of $\Tcal$).
The proof is analogous to that of \cite[Theorem 5.8]{OPS2020}.
We improve upon that result by achieving the same error with a smaller network.
Up to logarithmic factors,
the NN size is $O(Np)$, 
for $N\to\infty$ and/or $p\to\infty$, rather than $O(Np^2)$ in \cite[Theorem 5.8]{OPS2020}.
%%%%%%%%%%%%%%%%%%%%%%%%%%%%%%%%%%%%%%%%%%%%%%%%%%%%%%%%%%%%%%%%%%%%%
\subsection{Approximation of weighted Gevrey regular functions}
\label{sec:hpFEM}
%%%%%%%%%%%%%%%%%%%%%%%%%%%%%%%%%%%%%%%%%%%%%%%%%%%%%%%%%%%%%%%%%%%%%
We consider approximation rates of ReLU NNs via so-called 
``$hp$-approximations''\footnote{Also known as ``variable-degree, free-knot splines''.}
of functions on $I=(0,1)$ which may have a singularity at $x=0$ 
and which belong to a weighted Gevrey class $\Gcal^{2,\delta}_\beta(I)$
defined in \eqref{eq:defGevrey}.
The following theorem is a generalization of \cite[Theorem 3.36]{Schwab1998},
stated for the analytic case $\delta=1$, 
to Gevrey regular functions for $\delta\geq1$.
It is based on the geometric partition $\Tcal_{\sigma,N}$ of $I = (0,1)$ 
introduced in Section \ref{sec:CPwLoned}. 
\begin{proposition}[{{\cite[Theorem 5.10]{OPS2020}}}] 
\label{prop:hp}
Let $\sigma,\beta\in(0,1)$, $\lambda \coloneqq \sigma^{-1}-1$, $\delta\geq1$, $u\in\Gcal^{2,\delta}_\beta(I)$ 
and $N\in\N$ be given.
For $\mu_0 \coloneqq \mu_0(\sigma,\beta,\delta,\du[u]) \coloneqq \max\left\{1,
\frac{\du[u]\lambda e^{1-\delta}}{2\sigma^{1-\beta}} \right\}$
and 
for $\mu>\mu_0$ let $\bm p=(p_i)_{i=1}^{N}\subset\N$ be defined as 
$p_1 \coloneqq 1$ and $p_i \coloneqq \lfloor \mu i^\delta \rfloor$ for $i\in\{2,\ldots,N\}$. 

Then, as $N\to \infty$, 
${\rm dim}(S_{\bm p}(I,\Tcal_{\sigma,N})) = O(N^{1+\delta})$.
Furthermore, 
there exists a continuous, piecewise polynomial function 
$v\in S_{\bm p}(I,\Tcal_{\sigma,N})$ 
such that $v(x_{i})=u(x_{i})$ for $i\in\{1,\ldots,N\}$ 
and such that 
for constants
$b({\sigma,\beta,\delta,\mu})>0$, 
$c(\beta,\sigma) := (1-\beta)\log(1/\sigma) > 0$
and $C(\sigma,\beta,\delta,\mu,\Cu[u],\du[u]) > 0$ 
(where $\Cu[u]$ and $\du[u]$ are as in Equation \eqref{eq:defGevrey})
it holds that
\begin{align*}
\normc[H^1(I)]{u-v}
	\leq &\, C \exp(-c(\beta,\sigma)N) 
        \leq C\exp\left(-b\; {\rm dim}(S_{\bm p}(I,\Tcal_{\sigma,N}))^{1/(1+\delta)}\right)
\;.
\end{align*}
\end{proposition}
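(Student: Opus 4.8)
The plan is to reproduce the classical $hp$-approximation argument of \cite[Theorem 3.36]{Schwab1998} (which covers the analytic case $\delta=1$) while tracking the dependence of all constants on $\delta$, so that the Gevrey regularity $\delta\geq1$ is accommodated; this is exactly the generalization carried out in \cite[Theorem 5.10]{OPS2020}. I would first dispose of the dimension count. Since $S_{\bm p}(I,\Tcal_{\sigma,N})$ consists of continuous piecewise polynomials on $N$ elements, $\dim(S_{\bm p}(I,\Tcal_{\sigma,N})) = 1+\sum_{i=1}^N p_i$, and using $p_i=\lfloor\mu i^\delta\rfloor\leq \mu i^\delta$ together with $\sum_{i=1}^N i^\delta\leq C N^{1+\delta}$ gives $\dim=O(N^{1+\delta})$. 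This same bound yields the second claimed inequality: from $\dim\leq C N^{1+\delta}$ one gets $\dim^{1/(1+\delta)}\leq C' N$, whence $\exp(-c(\beta,\sigma)N)\leq \exp(-b\,\dim^{1/(1+\delta)})$ with $b:=c(\beta,\sigma)/C'$.

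Next I would construct $v$ elementwise, taking $v|_{I_i}$ to be a degree-$p_i$ polynomial projection of $u|_{I_i}$ that interpolates $u$ at \emph{both} endpoints of $I_i$. This forces $v\in C^0(I)$ and $v(x_i)=u(x_i)$ for $i=1,\ldots,N$, while crucially \emph{not} requiring interpolation at the singular node $x_0=0$. The error then splits as $\normc[H^1(I)]{u-v}^2=\sum_{i=1}^N\snormc[H^1(I_i)]{u-v}^2$, the $L^2$-part being controlled on each element by Poincar\'e's inequality thanks to the matched endpoint values, so it suffices to bound the seminorms. On the singular element $I_1=(0,\sigma^{N-1})$ one has $p_1=1$, and a weighted one-dimensional approximation estimate based on the $k=2$ Gevrey bound $\snormc[\Hwe{2,2}{\beta}(I)]{u}=\normc[L^2(I)]{x^\beta D^2 u}\leq \Cu[u]$ gives $\snormc[H^1(I_1)]{u-v}\leq C\,\Cu[u]\,\sigma^{(N-1)(1-\beta)}$, where $\beta<1$ is exactly what guarantees integrability near $0$.

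The heart of the argument is the estimate on $I_i$ for $i\geq2$. There $u$ is smooth and $x\geq x_{i-1}=\sigma^{N-i+1}$, so the Gevrey seminorm bound gives, for every $1\leq s\leq p_i$,
\[
\normc[L^2(I_i)]{D^{s+1}u}\leq x_{i-1}^{-(\beta+s-1)}\,\Cu[u]\,\du[u]^{\,s-1}\big((s-1)!\big)^{\delta}.
\]
Combining this with the standard local $hp$ polynomial estimate $\snormc[H^1(I_i)]{u-v}\leq C\,h_i^{\,s}\big((p_i-s)!/(p_i+s)!\big)^{1/2}\normc[L^2(I_i)]{D^{s+1}u}$ and the geometric-mesh identity $h_i/x_{i-1}=\lambda$, the $i$-dependent prefactor collapses to $\sigma^{(N-i+1)(1-\beta)}$ multiplied by $F_i(s):=\lambda^{s}\du[u]^{\,s-1}\big((s-1)!\big)^{\delta}\big((p_i-s)!/(p_i+s)!\big)^{1/2}$.

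Finally I would choose $s=s_i\sim i$ and estimate $F_i(s_i)$ by Stirling's formula. The threshold $\mu_0=\max\{1,\ \du[u]\lambda e^{1-\delta}/(2\sigma^{1-\beta})\}$ is engineered precisely so that $\mu>\mu_0$ forces $F_i(s_i)\leq C q^{\,i}$ with $q<\sigma^{1-\beta}$, turning $\sum_{i\geq2}\sigma^{2(N-i+1)(1-\beta)}F_i(s_i)^2$ into a convergent geometric series dominated by its first term. Adding the $I_1$-contribution then yields $\normc[H^1(I)]{u-v}\leq C\exp(-(1-\beta)\log(1/\sigma)\,N)=C\exp(-c(\beta,\sigma)N)$, which together with the dimension step completes the proof. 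The main obstacle is this last optimization: extracting the \emph{sharp} constant $\mu_0$ requires carefully balancing $\big((s-1)!\big)^{\delta}\sim (s/e)^{s\delta}$ (the source of the factor $e^{1-\delta}$) against $\lambda^{s}$ and the factorial decay $\big((p_i-s)!/(p_i+s)!\big)^{1/2}$, with the secondary technical point being the weighted approximation estimate on the singular element $I_1$.
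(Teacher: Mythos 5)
The paper does not prove this proposition: it is imported verbatim from \cite[Theorem 5.10]{OPS2020}, so there is no in-paper proof to compare against. Your outline reproduces the standard argument behind that result (the Gevrey-$\delta$ generalization of \cite[Theorem 3.36]{Schwab1998}) with all the essential ingredients in place — the dimension count $1+\sum_i p_i=O(N^{1+\delta})$, endpoint-interpolating elementwise projections, the weighted low-order estimate on the singular element $I_1$, the local $hp$ estimate combined with the weighted Gevrey bound and the geometric-mesh identity $h_i/x_{i-1}=\lambda$, and the choice $s_i\sim i$ with Stirling yielding geometric decay faster than $\sigma^{(1-\beta)i}$ — so it is correct in approach and consistent with the cited source.
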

This $hp$-approximation result implies, 
via the ReLU emulation of \Cheb polynomials, 
the following DNN expression rate bound.
For $\delta>1$, the exponent $1/(2+\delta)$ in the error bound of the theorem
is larger than the exponent $1/(2\delta+1)$ in \cite{OPS2020}.
We obtain a better exponential convergence rate bound.
\begin{theorem}
\label{thm:reluhp}
For all $\sigma,\beta \in (0,1)$, all $\delta\geq 1$,  
$\mu > \mu_0(\sigma,\beta,\delta,\du[u]) 
	$ $:= \max\left\{1,\frac{\du[u](\sigma^{-1}-1) e^{1-\delta}}{2\sigma^{1-\beta}} \right\}$,
and all $u\in\Gcal^{2,\delta}_{\beta}(I)$ 
there exist constants $C_* > 0$, $c>0$ 
and $b_*>0$
and
ReLU NNs $\{\Phi^{u,\sigma,N}\}_{N\in\N}$ 
of size $\size(\Phi^{u,\sigma,N})$ as in Definition~\ref{def:NeuralNetworks}
such that
\begin{align*}
\forall N\in\N:\quad 
\normc[H^1(I)]{u-\realiz(\Phi^{u,\sigma,N})}
	\leq &\,
			C_* \exp(-c(\beta,\sigma)N)  
    \leq 
    		C_* \exp\big(-b_* \size(\Phi^{u,\sigma,N})^{1/(2+\delta)} \big).
\end{align*}
Here, the constants $C_*,c,b_*$ satisfy
$c(\beta,\sigma) := (1-\beta)\log(1/\sigma)$,
$b_* := b_*(\sigma,\beta,\delta,\mu)>0$
and 
$C_* := C_*(\sigma,\beta,\delta,\mu,$ $\Cu[u],\du[u], \snormc[H^1(I)]{u})>0$,
and the ReLU NNs 
$\{\Phi^{u,\sigma,N}\}_{N\in\N}$ 
are such that 
\begin{align*}
\depth(\Phi^{u,\sigma,N})
	\leq &\, C_{\depth} c N ( \delta \log_2(N) + \log_2(\mu) +1)
		+ C(\delta,\mu) (1+\log_2(N) )^3
		,	
	\\
\size(\Phi^{u,\sigma,N})
	\leq &\, 4C_{\size} \mu c N^{\delta+2} 
		+ C(\sigma,\beta,\delta,\mu) \left( N^{\delta+1}\log_2(N) + N^2\log_2^{2}(N) \right)
	,
	\\
\sizefirst(\Phi^{u,\sigma,N}) 
        \leq &\, 2N+2
	,
	\\
\sizelast(\Phi^{u,\sigma,N})
	\leq &\, 2\mu N^{\delta+1} + 3N+1
.
\end{align*}
\end{theorem}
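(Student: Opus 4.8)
The plan is to compose the two main ingredients already in hand: the $hp$-spline approximation of Proposition~\ref{prop:hp} and the ReLU emulation of continuous piecewise polynomials of Proposition~\ref{prop:relupwpolynom}. First I would apply Proposition~\ref{prop:hp} with the given $\sigma,\beta,\delta,\mu$ to obtain, for each $N\in\N$, a continuous piecewise polynomial $v\in S_{\bsp}(I,\Tcal_{\sigma,N})$ on the geometric mesh $\Tcal_{\sigma,N}$ with degree vector $\bsp=(p_i)_{i=1}^N$, $p_1=1$, $p_i=\lfloor\mu i^\delta\rfloor$, satisfying $\normc[H^1(I)]{u-v}\leq C\exp(-c(\beta,\sigma)N)$ and $\snormc[H^1(I)]{v}\leq\snormc[H^1(I)]{u}$. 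Then I would set $\Phi^{u,\sigma,N}:=\Phi^{v,\Tcal_{\sigma,N},\bsp}_{\eps}$ from Proposition~\ref{prop:relupwpolynom}, choosing the emulation tolerance $\eps$ proportional to $\exp(-c(\beta,\sigma)N)$ (absorbing $\snormc[H^1(I)]{u}$ into the proportionality constant), and conclude by the triangle inequality.

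For the error bound I would use $r=2$ in Proposition~\ref{prop:relupwpolynom}. Its two estimates give $\snormc[H^1(I)]{v-\realiz(\Phi^{u,\sigma,N})}\leq\tfrac12\eps\snormc[H^1(I)]{v}$ and $\tfrac1h\normc[L^2(I)]{v-\realiz(\Phi^{u,\sigma,N})}\leq\snormc[H^1(I)]{v-\realiz(\Phi^{u,\sigma,N})}$; since $I=(0,1)$ has $h\leq 1$, these combine (up to an absolute constant) to $\normc[H^1(I)]{v-\realiz(\Phi^{u,\sigma,N})}\leq\eps\snormc[H^1(I)]{v}\leq\eps\snormc[H^1(I)]{u}$. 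With $\eps\asymp\exp(-c(\beta,\sigma)N)$ this yields $\normc[H^1(I)]{u-\realiz(\Phi^{u,\sigma,N})}\leq\normc[H^1(I)]{u-v}+\normc[H^1(I)]{v-\realiz(\Phi^{u,\sigma,N})}\leq C_*\exp(-c(\beta,\sigma)N)$, with $C_*$ depending on $\snormc[H^1(I)]{u}$ and the constants of Proposition~\ref{prop:hp}.

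The bulk of the work is the complexity bookkeeping: substituting $p_i=\lfloor\mu i^\delta\rfloor$ and $\log_2(1/\eps)=O(N)$ into the depth and size bounds of Proposition~\ref{prop:relupwpolynom}. I would use $p_i\leq\mu N^\delta$, $\sum_{i=1}^N p_i\leq\mu\sum_{i=1}^N i^\delta\leq\mu N^{\delta+1}$, $\log_2 p_i\leq\log_2\mu+\delta\log_2 N$, and hence $\sum_{i=1}^N(1+\log_2 p_i)^2=O(N\log_2^2 N)$ and $\sum_{i=1}^N p_i\log_2 p_i=O(N^{\delta+1}\log_2 N)$. The dominant contribution is $\sum_{i=1}^N 4C_{\size}p_i\log_2(1/\eps)$, producing the leading term $4C_{\size}\mu c N^{\delta+2}$, while every remaining term (the $40C_{\size}p_i\log_2 p_i$ sum, the $6C_{\depth}(1+\log_2 p_i)^2\log_2(1/\eps)$ sum, the $C\sum_i p_i$ term, and the $2N\max_i(\cdots)$ term) is of order $N^{\delta+1}\log_2 N$ or $N^2\log_2^2 N$, hence absorbed into $C(\sigma,\beta,\delta,\mu)(N^{\delta+1}\log_2 N+N^2\log_2^2 N)$. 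Likewise $\depth(\Phi^{u,\sigma,N})\leq C_{\depth}cN(\delta\log_2 N+\log_2\mu+1)+C(\delta,\mu)(1+\log_2 N)^3$ follows from the $\max_i$ bound, and $\sizefirst\leq 2N+2$, $\sizelast\leq 3N+1+2\sum_i p_i\leq 2\mu N^{\delta+1}+3N+1$ are read off directly.

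Finally, to pass from the error $\exp(-cN)$ to the rate $\exp(-b_*\size^{1/(2+\delta)})$, I would invert the size bound: since the lower-order terms are $o(N^{\delta+2})$ for $\delta\geq 1$, we have $\size(\Phi^{u,\sigma,N})\leq C'N^{\delta+2}$ for all $N$, so $N\geq(\size(\Phi^{u,\sigma,N})/C')^{1/(2+\delta)}$ and therefore $\exp(-cN)\leq\exp(-b_*\size(\Phi^{u,\sigma,N})^{1/(2+\delta)})$ with $b_*:=c\,(C')^{-1/(2+\delta)}$. I expect the main obstacle to be the careful summation in the complexity estimate — in particular pinning down the leading coefficient $4C_{\size}\mu c$ exactly (which fixes the admissible scaling of $\eps$) and verifying that all other contributions collapse into the stated lower-order terms; the error analysis and the final inversion are routine by comparison.
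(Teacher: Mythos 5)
Your proposal is correct and follows essentially the same route as the paper's proof: emulate the $hp$-interpolant of Proposition~\ref{prop:hp} via Proposition~\ref{prop:relupwpolynom} with $\eps\asymp\exp(-c(\beta,\sigma)N)$, apply the triangle inequality, and invert the size bound. The only slight inaccuracy is attributing the stability bound $\snormc[H^1(I)]{v}\leq\snormc[H^1(I)]{u}$ to Proposition~\ref{prop:hp}, which does not state it; the paper instead cites \cite[Remark 5.11]{OPS2020} for $\snormc[H^1(I\setminus I_{\sigma,1})]{v}\leq\snormc[H^1(I\setminus I_{\sigma,1})]{u}$ (noting the emulation is exact on $I_{\sigma,1}$ since $p_1=1$), but this changes nothing of substance.
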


\begin{proof}
The proof is analogous to that of \cite[Theorem 5.12]{OPS2020}:
an $hp$ Finite Element approximation will be emulated by DNNs,
and the triangle inequality will imply ReLU NN expression rate bounds.

The $hp$-approximation $v\in S_{\bm p}(I,\Tcal_{\sigma,N})$ 
is as in Proposition \ref{prop:hp}, 
with $\bm p\subset\N$ defined by $p_1=1$ and with
polynomial degree vector
$p_i=\lfloor \mu i^\delta\rfloor$ for $i\in\{2,\ldots,N\}$.
With $\eps \coloneqq \exp(-cN)$
we use Proposition \ref{prop:relupwpolynom}
to construct the ReLU NN 
$\Phi^{u,\sigma,N} \coloneqq \Phi^{v,\Tcal_{\sigma,N},\bm p}_{\eps}$.

From $p_1=1$ it follows that
$\normc[H^1(I_{1,\sigma})]{v-\mathrm{R}(\Phi^{u,\sigma,N})}=0$,
because ReLU NN emulations of continuous, piecewise linear functions are exact
(this can also be seen by setting $u\equiv 0$ 
in the first error bound in Proposition \ref{prop:relupwpolynom}).

By \cite[Remark 5.11]{OPS2020}, 
which states that
$\snormc[H^1(I\backslash I_{1,\sigma})]{v} \leq \snormc[H^1(I\backslash I_{1,\sigma})]{u}$,
it follows that 
\begin{align*}
\normc[H^1(I)]{u-\mathrm{R}(\Phi^{u,\sigma,N})}
\leq &\, \normc[H^1(I)]{u - v}
		+ \normc[H^1(I)]{v-\mathrm{R}(\Phi^{v,\Tcal_{\sigma,M},\bm p}_{\eps})}
	\\
	\leq &\, C(\sigma,\beta,\delta,\mu,\Cu[u],\du[u]) \exp(-cN) + \exp(-cN) \snormc[H^1(I\backslash I_{\sigma,1})]{v}
	\\
	\leq &\, \big(C(\sigma,\beta,\delta,\mu,\Cu[u],\du[u]) + \snormc[H^1(I)]{u} \big) \exp(-cN)
	\\
	=: &\, C_* \exp(-cN) 
	,	
	\\
\depth(\Phi^{u,\sigma,N})
	\leq &\, \max_{i=1}^N \left( C_{\depth}(1+\log_2(\mu i^\delta)) \log_2\big(1/\eps\big)
		+ \tfrac{2}{3} C_{\depth} (\log_2(\mu i^\delta))^3 + C (1+\log_2(\mu i^\delta))^2 \right)
	\\
	\leq &\, C_{\depth} c N ( \delta \log_2(N) + \log_2(\mu) +1 )
		+ C(\delta,\mu) (1+\log_2(N) )^3
	,
	\\
\size(\Phi^{u,\sigma,N})
	\leq &\, \sum_{i=1}^N \mu i^\delta \big( 4C_{\size} \log_2\big(1/\eps\big) 
		+ 40C_{\size} \log_2(\mu i^\delta) \big)
	\\ 
	&\, + \sum_{i=1}^N 6C_{\depth} (1+\log_2(\mu i^\delta))^2 \log_2\big(1/\eps\big)
		+ C \sum_{i=1}^N \mu i^\delta
	\\ 
	&\, + 2N \max_{i=1}^N \Big( C_{\depth}(1+\log_2(\mu i^\delta)) \log_2\big(1/\eps\big)
			+ \tfrac{2}{3} C_{\depth} (\log_2(\mu i^\delta))^3 
	\\ 
	&\, 	+ C (1+\log_2(\mu i^\delta))^2 \Big)  
	\\
	\leq &\, 4C_{\size} \mu c N^{\delta+2} 
		+ C(\sigma,\beta,\delta,\mu) \left( N^{\delta+1}\log_2(N) + N^2\log_2^{2}(N) \right)
	,
	\\
\sizefirst(\Phi^{u,\sigma,N})
	\leq &\, 2N+2
	,
	\\
\sizelast(\Phi^{u,\sigma,N})
	\leq &\, 3N+1 + 2\sum_{i=1}^N \mu i^\delta
	\leq 2\mu N^{\delta+1} + 3N+1
	.
\end{align*}
To finish the error bound,
we rewrite the bound on the network size as
\[
c N \geq c C(\sigma,\beta,\delta,\mu)^{-1/(2+\delta)} \size(\Phi^{u,\sigma,N})^{1/(2+\delta)} =: b_* \size(\Phi^{u,\sigma,N})^{1/(2+\delta)}
\]
and obtain that
$\exp(-cN) \leq \exp\big(-b_* \size(\Phi^{u,\sigma,N})^{1/(2+\delta)} \big)$.
\end{proof}

The results in this section are based on $H^1$-projections
in \cite[Section 3.3]{Schwab1998}.
For weighted analytic functions, i.e. $\delta=1$, 
analogous results based on 
elementwise interpolation in the \CC nodes
are proved in \cite[Section 6.4]{JOdiss},
with error bounds in $W^{1,r}(I)$ for $r\in(1,\infty]$.

Our results generalize in a straightforward way 
to functions with a finite number of algebraic singularities,
similarly to \cite[Remark 2.5.6]{JOdiss}.
Generalizations to higher dimensions also hold,
see e.g. \cite{FS2020} for the $hp$-approximation of Gevrey regular functions
in dimensions $d=2,3$. 
There, tensor products of univariate interpolants 
and of the presently developed ReLU NN emulations can be used.
For $\delta=1$ and $d=2,3$, a generalization of Theorem \ref{thm:reluhp} was shown in \cite{MOPS2020},
and for $d=2$ also in \cite{JOdiss}.
A generalization to multivariate Gevrey regular functions in norms without weights
(thus without algebraic point singularities)
was shown in \cite[Proposition 4.1]{OSZ2022hdim}.

%%%%%%%%%%%%%%%%%%%%%%%%%%%%%%%%%%%%%%%%%%%%%%%%%%%%%%%%%%%%%%%%%%%%%
\section{Conclusion and discussion}
\label{sec:Concl}
%%%%%%%%%%%%%%%%%%%%%%%%%%%%%%%%%%%%%%%%%%%%%%%%%%%%%%%%%%%%%%%%%%%%%

The present paper extended and improved emulation rate bounds 
from \cite{OPS2020} for
deep ReLU NNs for various spaces of continuous, piecewise polynomial
functions. 
In addition, due to the use of \Cheb polynomial expansions, 
all DNN emulation rate bounds are constructive.
Fast, \Cheb polynomial-based constructions are available which allow 
the linear construction of DNN approximations of continuous functions based
on point queries of functions in the \Cheb points.
This construction is a simple linear combination of 
``ReLU \Cheb features'', i.e. the presently constructed,
ReLU NN emulations of the \Cheb polynomials.
Quantitatively, 
for the emulation of (piecewise) polynomial functions 
of degree $p$ and relative accuracy $\eps\in (0,1]$,
the dependence of the network size on $p$ has been improved
from $C( p^2 + p \log(1/\eps) )$ in \cite{OPS2020} 
to $C( p (1+\log(p)) + p \log(1/\eps) )$.
In addition, as explained in Section~\ref{sec:conclemul}, 
the use of \Cheb polynomials for interpolation
brings improved numerical stability compared to the use of monomials,
and enables fast FFT-based NN constructions.
Compared to \cite{OPS2020}, 
we covered a wider range of Sobolev spaces
$W^{s,r}(I)$ for $0\leq s \leq 1$ and $1\leq r \leq \infty$
on arbitrary bounded intervals $I = (a,b)$,
keeping track of the natural scaling of Sobolev norms 
as a function of the interval length $b-a$.

While the present DNN emulation bounds addressed only deep, strict ReLU NNs, 
we emphasize that the results do generalize also to other activations.
The construction of ReLU NN approximations of \Cheb polynomials
in Section \ref{sec:basicnn} 
has been generalized to feedforward NNs with 
nonaffine, smooth activation functions 
in $C^2(U)\setminus\bbP_1$ for an open set $U\subset\R$,
in \cite[Corollary A.3 and Remark B.4]{OSX2024}.
We also remark that
the use of smooth activation functions, which are not as simple as the ReLU,
allows for better convergence rates.
In \cite{OSX2024},
univariate polynomials of degree $p$ are emulated by a NN of size bounded by $Cp$, 
for $C$ independent of the desired relative accuracy.
This is slightly better than the bound
$C( p (1+\log(p)) + p \log(1/\eps) )$ 
obtained here for strict ReLU NNs.
In terms of network size, 
the construction in \cite{OSX2024} is also more efficient than 
the shallow $\tanh$-NN approximation of all monomials of degree at most $p$
in \cite[Corollary 3.6]{DLM2021}.
For the case $d=1$,
the network constructed there emulates all univariate monomials of degree at most $p$
and has a network size bounded by $C p^2$.
An advantage of that construction 
is that the networks are shallow, i.e. they have fixed depth $2$ and width $O(p^2)$.
The depths of the NN emulations in this work and in \cite{OSX2024},
grow polylogarithmically with $p$.

Furthermore, in recent applications of DNN based numerical approximations
of solutions to PDEs, 
so-called \emph{``adaptive''} and \emph{``super-expressive''} activation functions
have been proposed.
Adaptive activations \cite{JKKAdAc2020}
introduce into each neuron a trainable scaling parameter,
which is updated during the training routine.
Numerical experiments in \cite{JKKAdAc2020}
report accelerated training and avoidance of bad local minima.
Because of the positive homogeneity of the ReLU,
i.e. $\varrho(\alpha x) = \alpha\varrho(x)$ for all $\alpha\geq0$ and $x\in\R$,
the introduction of scaling parameters inside neurons
does not affect the expressive power of the NNs.
All our results also apply to NNs with adaptive ReLU activations.
For recent theory on superexpressive activations, 
whose existence is stipulated by the Kolmogorov Superposition theorem, 
see e.g. \cite{ZShenEtAlFixNeuron,yarotsky2021elementary,shen2021neural}.
Arbitrary functions $f\in C^0([0,1]^d)$ for $d\in\N$ can be approximated 
by a NN whose depth and size depend only on $d$ and are independent of the desired accuracy.
This dramatic improvement in the bound on the network size
comes at the cost of needing high accuracy 
in the numerical evaluations of the affine transformations and nonlinear activation functions.
See e.g. \cite[Section 7]{YZ2020} 
for a discussion in a precursor of \cite{yarotsky2021elementary}.

Due to the equivalence of ReLU NNs and so-called 
\emph{spiking networks} which are prominent in neuromorphic computing 
(see \cite{stanojevic2022exact} and \cite[Theorem~3]{singh2023expressivity},
and the references there) 
all presently proved NN emulation rate bounds will imply corresponding
bounds for such networks.
By the construction in \cite{stanojevic2022exact},
for all ReLU NNs there exists a spiking NN with the same depth,
the same numbers of neurons per layer, the same realization 
and at most the same size, see the discussion in \cite[Section 6]{OSX2024}.

%\bibliographystyle{abbrv}
%\bibliography{references}

\end{document}